\documentclass[hidelinks,onefignum,onetabnum]{siamart251216}

\ifpdf
\hypersetup{
  pdftitle={Two-level convergence of Algebraic Multigrid with Overlapping Smoothers and Spectral Coarse Grids},
  pdfauthor={O. A. Krzysik, B. S. Southworth, and H. Al Daas}
}
\fi

\usepackage{amsmath,amssymb,amsfonts}
\usepackage{mathtools}
\mathtoolsset{centercolon}

\usepackage{bm}
\usepackage{xcolor}
\usepackage{graphicx}
\usepackage{epstopdf}
\usepackage{algorithmic}
\usepackage{tikz}
\usepackage{array}
\usepackage{stmaryrd}
\usepackage{comment}
\usepackage{accents}

\ifpdf
  \DeclareGraphicsExtensions{.pdf,.png,.jpg,.eps}
\else
  \DeclareGraphicsExtensions{.eps}
\fi

\headers{Two-level LS-AMG-DD convergence}
{O. A. Krzysik, B. S. Southworth, H. Al Daas}

\title{Two-level convergence of Algebraic Multigrid with Overlapping Smoothers and Spectral Coarse Grids%
\thanks{\funding{OAK and BSS were supported by the Laboratory Directed Research and Development program of Los Alamos National Laboratory, under project number 20240261ER. HAD was supported by the Fusion Futures Programme. As announced by the UK Government in October 2023, Fusion Futures aims to provide holistic support for the development of the fusion sector. Los Alamos National Laboratory report number LA-UR-26-24967.}}}

\author{
Oliver A. Krzysik\thanks{Theoretical Division, Los Alamos National Laboratory
(\email{okrzysik@lanl.gov}), \url{https://orcid.org/0000-0001-7880-6512}.}
\and
Ben S. Southworth\thanks{Theoretical Division, Los Alamos National Laboratory
(\email{southworth@lanl.gov}), \url{https://orcid.org/0000-0002-0283-4928}.}
\and
Hussam Al Daas\thanks{Science and Technology Facilities Council, Scientific Computing Department, Rutherford Appleton Laboratory, UK (\email{hussam.al-daas@stfc.ac.uk}), \url{https://orcid.org/0000-0001-9355-4042}.}
}

\newsiamremark{remark}{Remark}
\newsiamremark{hypothesis}{Hypothesis}
\newsiamremark{assumption}{Assumption}
\newsiamremark{fact}{Fact}
\newsiamthm{claim}{Claim}

\crefname{hypothesis}{Hypothesis}{Hypotheses}
\Crefname{hypothesis}{Hypothesis}{Hypotheses}
\crefname{assumption}{Assumption}{Assumptions}
\Crefname{assumption}{Assumption}{Assumptions}
\crefname{fact}{Fact}{Facts}
\Crefname{fact}{Fact}{Facts}
\crefname{claim}{Claim}{Claims}
\Crefname{claim}{Claim}{Claims}

\crefname{section}{Section}{Sections}
\Crefname{section}{Section}{Sections}
\crefname{subsection}{Section}{Sections}
\Crefname{subsection}{Section}{Sections}

\newcolumntype{P}[1]{>{\raggedright\arraybackslash}p{#1}}

\allowdisplaybreaks

\newcommand{\Hdiv}{H(\operatorname{div})}

\newcommand{\wt}[1]{\widetilde{#1}}

\DeclareFontFamily{U}{mathx}{}
\DeclareFontShape{U}{mathx}{m}{n}{<-> mathx10}{}
\DeclareSymbolFont{mathx}{U}{mathx}{m}{n}
\DeclareMathAccent{\wc}{0}{mathx}{"71}
\DeclareMathAccent{\wbar}{0}{mathx}{"73}

\DeclareMathOperator*{\argmin}{arg\,min}
\DeclareMathOperator*{\argmax}{arg\,max}

\DeclareMathOperator{\range}{range}

\begin{document}
\maketitle

\begin{abstract}
We recently developed the least-squares algebraic-multigrid domain-decomposition (LS-AMG-DD) solver as an algebraic multilevel method for sparse symmetric
positive definite matrices that admit a Gram representation \(A=G^{\top}G\) \cite{southworth2026lsamgdd}.
Many problem classes admit such structure, including many conforming finite-element discretizations.
The solver constructs coarse spaces from local eigenproblems on nonoverlapping, algebraic aggregates and uses Schwarz-type smoothers on the induced overlapping subdomains.  
This paper develops a novel two-level convergence theory for this solver.
Our theory shows that the solver's coarse space satisfies a weak approximation property in a norm induced by an aggregate-wise block-Jacobi smoother, and moreover, that the corresponding approximation constant is bounded by a user-controlled local spectral cutoff threshold.
We combine this approximation property with standard sharp theory for multiplicative two-level cycles. The resulting two-level bound is cleanly factored by the cutoff threshold and a smoother norm-comparison constant; we derive explicit bounds for this constant for block Jacobi and overlapping additive Schwarz smoothers.  
We also develop a new convergence bound for additive Schwarz methods in terms of a trivially computable constant that is bounded above by the coloring constant. 
Numerical experiments on scalar 
\(H^1\), vector \(H(\operatorname{div})\), and vector \(H(\operatorname{curl})\) finite-element problems provide supporting evidence for the theory, including evidence for the solver's insensitivity to mesh refinement and polynomial degree.
\end{abstract}

\begin{keywords}
Algebraic multigrid, domain decomposition, spectral coarse spaces, overlapping Schwarz, conforming finite elements, convergence theory, coloring constant
\end{keywords}

\begin{MSCcodes}
65F08, %
65N30, %
65N55 %
\end{MSCcodes}

\section{Introduction}

Algebraic multigrid (AMG) and domain decomposition (DD) are two standard frameworks for solving  large sparse symmetric positive definite (SPD)
systems
\cite{vanek1996sa,xu2017amg,SmithBjorstadGropp2004DD,ToselliWidlund2005DDM}.
AMG methods are largely built around an assumption of algebraically smooth error varying slowly in the direction of strong connections, and almost always consist of relatively simple pointwise smoothers and algebraic coarse-space construction. AMG methods are especially effective for many scalar elliptic problems, where this assumption is naturally satisfied, but are not applicable as black-box solvers for most
conforming \(H(\operatorname{div})\) and \(H(\operatorname{curl})\)
discretizations, and can fail on extreme anisotropy as well \cite{southworth2026lsamgdd,wimmer2024fast}.
Specialized AMG solvers also exist for \(H(\operatorname{div})\) and \(H(\operatorname{curl})\) in the form of ADS and AMS, respectively, that are effective on (at least) isotropic problems, but are less black-box than classical AMG in that they require detailed problem information \cite{kolev2009ams,kolev2012ads}. Very recent work has also considered specialized truly algebraic AMG solvers specifically for two-dimensional \(H(\operatorname{curl})\) problems \cite{shen2026nodal}.
In contrast to AMG, spectral DD methods target robustness by using block-based overlapping Schwarz smoothers on large subdomains, while enriching coarse spaces with local
generalized eigenvectors  \cite{SpillaneEtAl2014GenEO,aldaas2019coarsespaces,aldaas2021multilevel,aldaas2022normaleq}.
Typically, such DD methods are inherently two-level.

In \cite{southworth2026lsamgdd} we introduced the least-squares algebraic-multigrid domain-decomposition (LS-AMG-DD) solver
 to combine concepts from multilevel AMG and two-level spectral DD.  
It is
an algebraic multilevel method for sparse SPD systems that admit a Gram, or
least-squares, representation \(A=G^{\top}G\), using algebraic aggregation,
overlapping Schwarz smoothing, and local generalized eigenproblems built from
Gram-induced symmetric positive semidefinite (SPSD) splittings.
In our companion work \cite{krzysik2026conforming} we show how the Gram input
required by LS-AMG-DD arises naturally in the context of conforming finite
element discretizations, proving an equivalence between locally supported Gram representations and local SPSD splittings. 
We also show robustness of the solver across the challenging problems of grad--div in \(\Hdiv\), anisotropic hyperdiffusion in $H^2$, and linear elasticity in vector $H^1$, particularly in difficult parameter regimes where standard black-box AMG methods outright fail.  
Thus LS-AMG-DD provides an AMG-style multilevel algebraic route
for problem classes where standard black-box AMG is usually not effective. The overarching goal of the solver is AMG-like scalability and low operator complexity when viable, while gaining the robustness of two-level spectral DD. What is absent from \cite{southworth2026lsamgdd,krzysik2026conforming}, and what we address in the present paper, is a convergence theory that explains the quality of the
computed LS-AMG-DD coarse space and the resulting two-level method.

Since LS-AMG-DD blends ingredients from both DD and AMG, there are two natural analytical frameworks one could use to analyze the method. 
On the DD side, two-level Schwarz methods and spectral/adaptive coarse spaces are typically analyzed through abstract Schwarz or fictitious-space type arguments together with stable decomposition estimates; see, for example,
\cite{SmithBjorstadGropp2004DD,ToselliWidlund2005DDM,Xu1992SubspaceCorrection,GalvisEfendiev2010ReducedCoarse,NatafXiangDoleanSpillane2011DtN,SpillaneEtAl2014GenEO,HeinleinKlawonnKnepperRheinbach2019AdaptiveGDSW,aldaas2022normaleq}. This is also typically done in an additive correction framework rather than multiplicative coupling of smoothing and coarse-grid correction (although not always). 
In particular, the use of local generalized eigenvalue problems to define robust coarse spaces is widely used and analyzed in spectral DD literature.
In contrast, two-level multigrid is analyzed as a multiplicative method and based on certain approximation properties that measure the interaction/complementarity of interpolation and smoothing \cite{falgout2005twogrid,maclachlan2014theoretical,xu2017amg,vassilevski2010lecturenotes}. For AMG, necessary, sufficient and tight two-level convergence bounds have been developed for abstract interpolation and smoothing operators \cite{falgout2004generalizing,falgout2005twogrid}. However, this tight analytical framework has not yet been applied to overlapping smoothers and coarse grids built from local spectral problems.

Thus here we analyze LS-AMG-DD in an AMG approximation-property setting, showing that the spectral-based interpolation used in LS-AMG-DD admits the weak approximation property (WAP) needed by AMG theory \cite{falgout2005twogrid,maclachlan2014theoretical,xu2017amg,vassilevski2010lecturenotes}.
Our developments use the traditional multigrid concept of an approximation property, which we combine with subspace correction theory to obtain two-level explicit convergence
bounds for LS-AMG-DD.
Our main result is that the LS-AMG-DD interpolation operator satisfies a WAP whose constant is controlled by the realized (user-defined) local eigenvalue cutoff, which then leads to sharp two-level theory. 

The remainder of the paper is organized as follows. \Cref{sec:prelim} provides background on LS-AMG-DD and the two-level convergence framework we build on.
\Cref{sec:two_level} develops the key convergence theory, with \cref{sec:smoothers} providing concrete estimates for certain smoothers and matrix classes.
In these sections we also present a new convergence bound for additive Schwarz in the context of Gram matrices, which is trivially computable, and bounded above (sometimes strictly) by the coloring constant. Furthermore, we illustrate the nuances of using restricted additive Schwarz (RAS) as a smoother, proving that, despite the fact that it performs excellently in many numerical tests, it can diverge on trivial problems, even for SPD \(\mathsf{M}\)-matrices.
\Cref{sec:theory_interpretation} compares the resulting bounds with existing algebraic DD estimates in the literature, showing that our new theory yields smaller condition number bounds than existing theory for closely related, but different, methods.
Supporting numerical results on challenging finite-element problems are presented in
\cref{sec:numerics}, followed by conclusions in \cref{sec:conclusion}.

\section{Preliminaries}
\label{sec:prelim}

\subsection{Background on LS-AMG-DD}
\label{sec:ls_amg_dd_background}

We now introduce two-level LS-AMG-DD objects relevant for our analysis; we point the reader to \cite{southworth2026lsamgdd,krzysik2026conforming} for more comprehensive descriptions of the method, including its multilevel formulation. 
Let
\(A\in\mathbb R^{n\times n}\) be SPD with Gram representation
\(A=G^{\top}G\), where \(G\in\mathbb R^{m\times n}\).  The integer \(n\) is
the number of degrees of freedom (DOFs), while \(m \geq n\) is the number of equations represented by rows of $G$.  
Since \(A\) is SPD, \(G\) has full column rank.  We
write the columns of \(G^{\top}\) as
\(G^{\top}=[\bm{g}_1,\dots,\bm{g}_m]\), for \(\bm{g}_j \in \mathbb{R}^n\), so that
\(A=\sum_{j=1}^m \bm{g}_j\bm{g}_j^{\top}\). 
The LS-AMG-DD solver takes the form of a standard multiplicative two-level method, so that it has the error-propagation operator
\begin{equation}
\label{eq:ETG_lsdd}
  E_{\mathrm{TG}}
  =
  \bigl(I-M^{-\top}A\bigr)
  \bigl(I-PA_c^{-1}P^{\top}A\bigr)
  \bigl(I-M^{-1}A\bigr),
  \qquad
  A_c:=P^{\top}AP \in \mathbb{R}^{n_c \times n_c}.
\end{equation}
Here \(P \colon \mathbb{R}^{n_c} \to \mathbb{R}^n\) is the interpolation
operator outlined below, and \(M^{-1}\approx A^{-1}\) is the smoothing matrix
associated with a Schwarz-type iteration.  Crucial to the definition of both $P$ and $M^{-1}$ are several DOF partitions introduced below.

The DOFs $\{ 1, \ldots, n\}$ are partitioned into \(n_{\rm agg}\) disjoint aggregates
\begin{align} \label{eq:agg_def}
\{\omega_i\}_{i=1}^{n_{\rm agg}},
\qquad
\bigcup_{i=1}^{n_{\rm agg}}\omega_i = \{1,\dots,n\},
\qquad
\omega_i \cap \omega_k = \emptyset
\quad\text{for } i\neq k,
\end{align}
which are constructed algebraically from the matrix \(A\).  
Next, overlapping $\{ \Omega_i \}_{i = 1}^{n_{\rm agg}}$ and interface $\{ \Gamma_i \}_{i = 1}^{n_{\rm agg}}$ DOF sets are induced from \(\{\omega_i\}_{i=1}^{n_{\rm agg}}\) and the matrix $G$.
For each aggregate, let
\(
\mathfrak{R}_{\omega_i}:=\{j:\operatorname{supp}(\bm{g}^\top_j)\cap\omega_i\ne\emptyset\} \subseteq \{1, \ldots, m \}\)
be the row indices of $G$ whose column support touches aggregate \(\omega_i\).
The associated overlap and interface DOF sets are then defined as
\begin{align} \label{eq:overlap_def}
  \Omega_i
  :=
  \bigcup_{j\in\mathfrak{R}_{\omega_i}}\operatorname{supp}(\bm{g}_j^{\top}),
  \qquad
  \Gamma_i:=\Omega_i\setminus\omega_i .
\end{align}

We use \(R_{\omega_i}\), \(R_{\Gamma_i}\), and \(R_{\Omega_i}\) to denote the Boolean restriction operator from the global DOF set $\{ 1, \ldots, n \}$ onto the aggregate, interface, and
overlap DOF sets, respectively.  
Note that for any DOF index set \(X\subseteq\{1,\dots,n\}\), \(R_X:=I_n(X,:)\) denotes the restriction to the coordinates indexed by \(X\).
On \(\Omega_i = \omega_i \cup \Gamma_i\) we define the partition-of-unity matrix
\(
  D_{\Omega_i}
  :=
  \begin{bsmallmatrix}
    I_{|\omega_i|} & 0\\
    0 & 0_{|\Gamma_i|}
  \end{bsmallmatrix},
\)
supposing DOFs are ordered first by aggregate and then interface.
Since the aggregates form a disjoint partition,
\(
  \sum_{i=1}^{n_{\rm agg}}
  R_{\Omega_i}^{\top}D_{\Omega_i}R_{\Omega_i}
  =
  I .
\)

The smoothing iteration in \eqref{eq:ETG_lsdd} is defined by a Schwarz method
using the aggregate sets \eqref{eq:agg_def}, the overlap sets
\eqref{eq:overlap_def}, or both.
For example, in \cite{southworth2026lsamgdd}, \(M^{-1}\) is taken as a RAS method: the residual is restricted to \(\Omega_i\), the principal submatrix of \(A\) on \(\Omega_i\) is inverted onto it, and the resulting local correction is restricted back to \(\omega_i\).
In this paper we develop detailed theory for $M^{-1}$ given by block Jacobi defined over the aggregate set \eqref{eq:agg_def} and overlapping additive Schwarz defined on the overlapping set \eqref{eq:overlap_def}.
However, our most general theory applies to \cref{eq:ETG_lsdd} regardless of the exact form of $M^{-1}$; our numerical tests consider the three aforementioned smoothers as well as overlapping multiplicative Schwarz defined on \eqref{eq:overlap_def}.

\subsubsection{Local eigenvalue problems and interpolation}
\label{subsec:lsamgdd_splitting}

In LS-AMG-DD, the interpolation \(P\) in \eqref{eq:ETG_lsdd} is block diagonal over the aggregate set
\(\{\omega_i\}_{i=1}^{n_{\rm agg}}\), with each diagonal block formed from a
selected subset of eigenvectors from local generalized eigenvalue problems.
To describe these local eigenproblems, we first introduce a local  symmetric SPSD splitting of the matrix \(A\) on the overlaps \(\{\Omega_i\}_{i=1}^{n_{\rm agg}}\).
For each row index \(j\in\{1,\dots,m\}\) of $G$, define its aggregate-multiplicity $\textrm{mult}_{\omega}(j)$ as the number of aggregates it touches:
\(
\textrm{mult}_{\omega}(j)
:=
\bigl|
\{\, i\in\{1,\dots,n_{\rm agg}\} : j\in\mathfrak{R}_{\omega_i} \,\}
\bigr|.
\)
We assume that any identically zero rows of \(G\) have been
removed, and hence every row touches at least one aggregate, so \(\textrm{mult}_{\omega}(j)\ge 1\) for all \(j\).
For each \(\omega_i\), define the diagonal weighting matrix 
\(
W_{\mathfrak{R}_{\omega_i}}
:=
\operatorname{diag}_{j\in\mathfrak{R}_{\omega_i}} \!\bigl(\textrm{mult}_{\omega}(j)^{-1}\bigr),
\)
and the weighted local Gram factor
\(
H_{\Omega_i}
:=
W_{\mathfrak{R}_{\omega_i}}^{1/2} G(\mathfrak{R}_{\omega_i},\Omega_i).
\)
Then, we have the following local SPSD splitting:
\begin{equation}
\label{eq:SPSD_splitting}
A
=
\sum_{i=1}^{n_{\rm agg}}
R_{\Omega_i}^{\top}\,
\widetilde A_{\Omega_i}\,
R_{\Omega_i},
\quad
\widetilde A_{\Omega_i}
:=
H_{\Omega_i}^{\top} H_{\Omega_i}
\succeq 0.
\end{equation}

For each overlap \(\Omega_i=\omega_i\cup\Gamma_i\), denote the associated principal submatrix of $A$ as
\(
A_{\Omega_i}:=R_{\Omega_i}AR_{\Omega_i}^{\top},
\)
and on $\Omega_i$ assume the DOFs are ordered first by aggregate and then interface. 
The local generalized eigenvalue problem on each $\Omega_i$ is then given by
\begin{equation}
\label{eq:gep_local}
D_{\Omega_i}A_{\Omega_i}D_{\Omega_i}\bm u_i
=
\lambda\,\widetilde A_{\Omega_i}\bm u_i
\quad
\iff
\quad
\begin{bmatrix}
A_{\omega_i,\omega_i} & 0\\
0 & 0
\end{bmatrix}
\begin{bmatrix}
\bm u_{\omega_i}\\
\bm u_{\Gamma_i}
\end{bmatrix}
=
\lambda
\begin{bmatrix}
\widetilde A_{\omega_i,\omega_i} & \widetilde A_{\omega_i,\Gamma_i}\\
\widetilde A_{\Gamma_i,\omega_i} & \widetilde A_{\Gamma_i,\Gamma_i}
\end{bmatrix}
\begin{bmatrix}
\bm u_{\omega_i}\\
\bm u_{\Gamma_i}
\end{bmatrix},
\end{equation}
with the equivalence following because \(D_{\Omega_i}\) is the identity on \(\omega_i\) and zero on \(\Gamma_i\).
For finite nonzero eigenvalues, elimination of the interface variables yields the reduced generalized eigenvalue problem
\begin{equation}
\label{eq:gep_reduced}
A_{\omega_i,\omega_i}\bm u_{\omega_i}
=
\lambda\,\widetilde S_i \bm u_{\omega_i},
\qquad
{\rm with}
\qquad
\widetilde S_i
:=
\widetilde A_{\omega_i,\omega_i}
-
\widetilde A_{\omega_i,\Gamma_i}
\bigl(\widetilde A_{\Gamma_i,\Gamma_i}\bigr)^{\dagger}
\widetilde A_{\Gamma_i,\omega_i},
\end{equation}
where $\dagger$ denotes the Moore-Penrose pseudoinverse, and \(\widetilde S_i\) is the Schur complement of \(\widetilde A_{\Omega_i}\) onto \(\omega_i\). 

For each $i \in \{1, \ldots, n_{\rm agg}\}$ we build a local coarse space using the largest generalized eigenvectors from \eqref{eq:gep_reduced} together with the \(+\infty\) modes corresponding to \(\ker(\widetilde S_i)\). 
The global interpolation operator $P$ is then assembled from the collection of all such local coarse spaces, as in the following definition.
The key user-controlled parameter in defining $P$ is the local spectral cutoff threshold $\tau_{\rm cut} \geq 1$, determining which of the local eigenvectors contribute to the local coarse space.
Our theory in \cref{sec:two_level} shows that convergence of the LS-AMG-DD solver is controlled by $\tau_{\rm cut}$.
Note that it can be shown that $A_{\omega_i, \omega_i} \succeq \wt{S}_i \succeq 0$, and hence $\lambda \geq 1$ in \eqref{eq:gep_reduced}, which is why $\tau_{\rm cut} \geq 1$.

\begin{definition}[Interpolation]
\label{def:coarse_space}
Given \(\widetilde S_i\) in \eqref{eq:gep_reduced}, let 
\(
\{ \bm \psi_{i,k} \}_{k \in \mathcal J_i^{\rm ker}}
\)
be a basis for $\ker(\widetilde S_i)$, with 
\(
\mathcal J_i^{\rm ker}
:=
\{1,\ldots,\dim(\ker(\widetilde S_i))\}.
\)
Let
\(
\{(\lambda_{i,k},\bm \varphi_{i,k})\}_{k\in\mathcal J_i^{\rm fin}}
\)
be the finite generalized eigenpairs of the pencil in \eqref{eq:gep_reduced}, with vectors chosen so that
\(\{\bm\psi_{i,k}\}_{k\in\mathcal J_i^{\rm ker}}
\cup
\{\bm\varphi_{i,k}\}_{k\in\mathcal J_i^{\rm fin}}\)
forms a basis of \(\mathbb R^{|\omega_i|}\).
Partition the finite indices as
\(
\mathcal J_i^{\rm fin}=\mathcal J_i^{\rm disc}\cup\mathcal J_i^{\rm keep},
\)
where 
\(
\mathcal J_i^{\rm disc}
:=
\{k\in\mathcal J_i^{\rm fin}:\lambda_{i,k}\le \tau_{\rm cut}\},
\)
and
\(
\mathcal J_i^{\rm keep}
:=
\{k\in\mathcal J_i^{\rm fin}:\lambda_{i,k}> \tau_{\rm cut}\}.
\). 
The retained local coarse space of dimension $n_{c,i} := \dim(\ker(\widetilde S_i)) + |\mathcal J_i^{\rm keep}|$ is then given by the matrix 
\begin{align} \label{eq:Z-def}
    Z_i
    :=
    \begin{bmatrix}
        Z_i^{\rm ker} \;\, Z_i^{\rm fin}
    \end{bmatrix}
    \in\mathbb{R}^{|\omega_i|\times n_{c,i}},
    \quad
    Z_i^{\rm ker}
    :=
    ( \bm \psi_{i,k} )_{k\in\mathcal J_i^{\rm ker}},
    \quad
    Z_i^{\rm fin}
    :=
    ( \bm \varphi_{i,k} )_{k\in\mathcal J_i^{\rm keep}},
\end{align}
The LS-AMG-DD interpolation operator ${P \in \mathbb{R}^{n \times n_c}}$ in \eqref{eq:ETG_lsdd}, where \(n_c=\sum_{i=1}^{n_{\rm agg}} n_{c,i}\), is
\begin{equation}
\label{eq:lsamgdd_interpolation}
P
=
\Bigl[
R_{\omega_1}^{\top}Z_1
\ \cdots\
R_{\omega_{n_{\rm agg}}}^{\top}Z_{n_{\rm agg}}
\Bigr].
\end{equation}
\end{definition}

We assume throughout the estimates in the next sections that
\(\mathcal J_i^{\rm disc}\neq\emptyset\) for each aggregate \(i\), so that each local coarse space is a proper subspace of \(\mathbb R^{|\omega_i|}\).
If this condition fails for some \(i\), then the local interpolation is exact on that aggregate, and the corresponding local estimate is trivial.
For later use, define the realized local discarded-mode constant
\begin{equation}
\label{eq:tau_i_def}
    \tau_i
    :=
    \max_{k\in\mathcal J_i^{\rm disc}}\lambda_{i,k},
    \qquad
    \tau_{\max}:=\max_{1\le i\le n_{\rm agg}}\tau_i .
\end{equation}
Thus \(\tau_i\le \tau_{\rm cut}\) for all \(i\), and
\(\tau_{\max}\le \tau_{\rm cut}\).

\subsection{Background on sharp two-level convergence theory}
\label{sec:two_level_background}

In this section we provide an overview of the framework and setup used next in \cref{sec:two_level}. 
Let \(A\in\mathbb R^{n\times n}\) be SPD, and let \(P\in\mathbb R^{n\times n_c}\) be the interpolation operator, assumed to have full column rank.
For any SPD matrix \(X\), define the \(X\)-orthogonal projector onto \(\range(P)\) by
\(
\Pi_X(P):=P(P^{\top}XP)^{-1}P^{\top}X.
\)
Since \(P\) is fixed, we abbreviate \(\Pi_A:=\Pi_A(P)\).
Then, consider a multiplicative two-level V(1,1) cycle corresponding to the error propagation operator
\begin{equation}
\label{eq:ETG}
E_{\mathrm{TG}}:=(I-M^{-\top}A) (I-\Pi_A) (I-M^{-1}A).
\end{equation}
We assume the corresponding smoother iteration is convergent in the $A$-norm, $\|I- M^{-1}A\|_A<1 \iff M + M^\top - A \succ 0$ \cite[Thm. 3.1]{vassilevski2010lecturenotes}.
In the tight two-level theory presented below, $M$ appears only by proxy through the so-called symmetrized smoother $\wt{M}$, defined by:
\begin{align} \label{eq:Mtilde}
\widetilde M:=M^{\top}(M+M^{\top}-A)^{-1}M
\;
\iff
\;
I - \wt{M}^{-1} A = (I - M^{-1} A)(I - M^{-\top} A).
\end{align}
The classical weak-approximation-property viewpoint in AMG is that two-level convergence is governed by how well the coarse space $\range(P)$ approximates vectors on the $A$-orthogonal coarse-space complement; see, for example, \cite{falgout2004generalizing,falgout2005twogrid,vassilevski2010lecturenotes,maclachlan2014theoretical}.

\begin{definition}[\(\mathcal M\)-weak approximation property]
\label{def:wap_calM}
Given an SPD matrix ${\cal M}$, we say that the coarse space \(\range(P)\) satisfies the \(\mathcal M\)-WAP if there exists a constant \(C_{\mathcal M}>0\) such that, for every
\(\bm v\in\mathbb R^n\),
\begin{equation}
\label{eq:wap_def}
\min_{\bm v_c\in\range(P)} \|\bm v-\bm v_c\|_{\mathcal M}^2
= \|(I - \Pi_{\mathcal M})\bm v\|_{\mathcal M}^2 
\le
C_{\mathcal M}\,\|\bm v\|_A^2.
\end{equation}
We define $\mathcal{W}_{\mathcal M} > 0$ as the smallest constant such that \eqref{eq:wap_def} holds.
\end{definition}

\begin{theorem}[Sharp two-level]
\label{thm:falgout_tg}
For the two-level method \eqref{eq:ETG}, write \(\Pi_{\widetilde M}:=\Pi_{\widetilde M}(P)\) and define
\begin{equation}
\label{eq:falgout_K}
K_{\mathrm{TG}}
:=
\max_{\bm v \in \mathbb{R}^n \setminus\{0\}}
\frac{\|(I-\Pi_{\widetilde M})\bm v\|_{\widetilde M}^2}{\|\bm v\|_A^2}
=
\max_{\bm v\in\range(I-\Pi_A)\setminus\{0\}}
\frac{\|(I-\Pi_{\widetilde M})\bm v\|_{\widetilde M}^2}{\|\bm v\|_A^2}
=
{\cal W}_{\wt M}.
\end{equation}
Then
\(
\|E_{\mathrm{TG}}\|_A=1-\frac{1}{K_{\mathrm{TG}}}.
\)
Moreover, if $\range(P)$ satisfies the $\widetilde M$-weak approximation property with any constant ${\cal C}_{\widetilde M}$, then
\(
\|E_{\mathrm{TG}}\|_A\le 1-1/{\cal C}_{\widetilde M}.
\)
\end{theorem}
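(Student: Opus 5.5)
We follow the classical argument of Falgout--Vassilevski--Zikatanov: reduce the two-level error operator to a symmetric product whose $A$-norm is governed by a generalized Rayleigh quotient. Write $E_{\mathrm{TG}}=(I-M^{-\top}A)(I-\Pi_A)(I-M^{-1}A)$. Since $I-\Pi_A$ is an $A$-orthogonal projector, $(I-\Pi_A)^2=I-\Pi_A$, and $(I-M^{-\top}A)$ is the $A$-adjoint of $(I-M^{-1}A)$, we have $E_{\mathrm{TG}}=X^{*}X$ with $X:=(I-\Pi_A)(I-M^{-1}A)$, where $^*$ denotes the $A$-adjoint. Hence $E_{\mathrm{TG}}$ is $A$-self-adjoint and $A$-positive semidefinite, and $\|E_{\mathrm{TG}}\|_A=\|X\|_A^2=\|XX^*\|_A$. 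Now
\[
XX^*=(I-\Pi_A)(I-M^{-1}A)(I-M^{-\top}A)(I-\Pi_A)=(I-\Pi_A)(I-\widetilde M^{-1}A)(I-\Pi_A),
\]
using the identity in \eqref{eq:Mtilde}. Restricted to $\range(I-\Pi_A)$, this operator equals $I-(I-\Pi_A)\widetilde M^{-1}A$. Therefore
\[
\|E_{\mathrm{TG}}\|_A=1-\min_{\bm v\in\range(I-\Pi_A)\setminus\{0\}}\frac{\langle A(I-\Pi_A)\widetilde M^{-1}A\bm v,\bm v\rangle}{\|\bm v\|_A^2}=1-\min_{\bm v}\frac{\langle \widetilde M^{-1}A\bm v,A\bm v\rangle}{\langle A\bm v,\bm v\rangle},
\]
where the last equality uses $A$-symmetry of $\Pi_A$ and $(I-\Pi_A)\bm v=\bm v$. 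The smoother convergence assumption $M+M^\top-A\succ0$ ensures $\widetilde M$ is SPD, so all these quantities are well defined.

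The main step is to identify this minimum with $1/K_{\mathrm{TG}}$. Set $\bm w=A\bm v$. The condition $\bm v\in\range(I-\Pi_A)$ is equivalent to $P^\top\bm w=0$. The quantity $\max\langle A\bm v,\bm v\rangle/\langle\widetilde M^{-1}\bm w,\bm w\rangle$ over such $\bm w$ equals the largest eigenvalue of $\widetilde M^{-1}A$ restricted to an appropriate subspace. The cleanest route is the standard lemma
\[
\max_{\bm v}\frac{\|(I-\Pi_{\widetilde M})\bm v\|_{\widetilde M}^2}{\|\bm v\|_A^2}=\Bigl(\min_{P^\top \bm w=0}\frac{\langle\widetilde M^{-1}\bm w,\bm w\rangle}{\langle A^{-1}\bm w,\bm w\rangle}\Bigr)^{-1}.
\]
To prove it, introduce $T:=\widetilde M^{1/2}(I-\Pi_{\widetilde M})A^{-1/2}$, whose squared norm is $K_{\mathrm{TG}}$. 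The operator $\widetilde M(I-\Pi_{\widetilde M})$ is symmetric, and $Q:=\widetilde M^{1/2}(I-\Pi_{\widetilde M})\widetilde M^{-1/2}$ is the Euclidean orthogonal projector onto $\widetilde M^{1/2}\ker(P^\top)$... more concretely, onto the complement of $\widetilde M^{1/2}\range(P)$. Then $\|T\|^2=\|A^{-1/2}\widetilde M^{1/2}Q\|^2$ is a max over $\bm y\in\range(Q)$. Writing $\bm y=\widetilde M^{-1/2}\bm w$ with $P^\top\bm w=0$ gives exactly $\max\langle A^{-1}\bm w,\bm w\rangle/\langle\widetilde M^{-1}\bm w,\bm w\rangle$. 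Checking the range identification $\range(Q)=\widetilde M^{-1/2}\ker(P^\top)$ is the step I expect to require the most care. After it, $\|E_{\mathrm{TG}}\|_A=1-1/K_{\mathrm{TG}}$ follows.

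The second equality in \eqref{eq:falgout_K} follows because $(I-\Pi_{\widetilde M})\Pi_A\bm v=0$, since $\Pi_A\bm v\in\range(P)$. Thus replacing $\bm v$ by $(I-\Pi_A)\bm v$ leaves the numerator unchanged and does not increase the denominator $\|\bm v\|_A^2\ge\|(I-\Pi_A)\bm v\|_A^2$. The identity $K_{\mathrm{TG}}=\mathcal W_{\widetilde M}$ is immediate from \cref{def:wap_calM}, since $\mathcal W_{\widetilde M}$ is by definition the smallest admissible constant, i.e. the maximum of the same ratio. Finally, if the $\widetilde M$-WAP holds with some constant $\mathcal C_{\widetilde M}$, then $K_{\mathrm{TG}}=\mathcal W_{\widetilde M}\le\mathcal C_{\widetilde M}$. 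Hence $1-1/K_{\mathrm{TG}}\le1-1/\mathcal C_{\widetilde M}$, which is the claimed bound.
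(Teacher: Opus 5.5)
The paper does not prove this theorem. It quotes it as background from the two-grid literature (Falgout--Vassilevski--Zikatanov and related references), so there is no in-paper argument to compare yours against.

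Your proof is correct, and it is the standard argument behind the cited result. The following steps all check out:
\begin{itemize}
\item the factorization $E_{\mathrm{TG}}=X^{*}X$ with $X=(I-\Pi_A)(I-M^{-1}A)$;
\item the passage to $XX^{*}=(I-\Pi_A)(I-\widetilde M^{-1}A)(I-\Pi_A)$, which correctly uses the ordering $(I-M^{-1}A)(I-M^{-\top}A)$ built into \eqref{eq:Mtilde};
\item the reduction to $\min_{P^\top\bm w=0}\langle\widetilde M^{-1}\bm w,\bm w\rangle/\langle A^{-1}\bm w,\bm w\rangle$ via $\range(I-\Pi_A)=\ker(P^\top A)$;
\item the inversion of this quotient through the projector $Q$;
\item the second equality in \eqref{eq:falgout_K} and the WAP consequence.
\end{itemize}

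A few small repairs are needed.
\begin{itemize}
\item $\range(Q)$ is $\widetilde M^{-1/2}\ker(P^\top)$, not $\widetilde M^{1/2}\ker(P^\top)$ as you first write. The identification you flag as delicate is a one-liner: $\bm y\perp\widetilde M^{1/2}\range(P)$ if and only if $P^\top\widetilde M^{1/2}\bm y=0$.
\item The step $\|XX^{*}\|_A=\max_{\bm v}(XX^{*}\bm v,\bm v)_A/\|\bm v\|_A^2$ needs $XX^{*}$ to be $A$-positive semidefinite. This holds, but you should say so.
\item The identity tacitly requires $\range(P)\neq\mathbb R^n$; otherwise $E_{\mathrm{TG}}=0$ and $K_{\mathrm{TG}}=0$. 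This is the same nondegeneracy the paper enforces by assuming $\mathcal J_i^{\rm disc}\neq\emptyset$.
\end{itemize}
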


Next in \cref{sec:two_level} we present an upper bound for \(K_{\mathrm{TG}}\) in terms of the LS-AMG-DD eigenvalue threshold \(\tau_{\rm cut}\) from \cref{def:coarse_space}.
Although \cref{thm:falgout_tg} is stated in terms of the \(\widetilde M\)-weak approximation property, the local spectral construction in LS-AMG-DD naturally yields control in the aggregate-wise block Jacobi metric \(M_{\omega {\rm J}}\), which arises from assembling a block Jacobi smoother over the aggregates \( \{ \omega_i \}_{i = 1}^{n_{\rm agg}} \) in \eqref{eq:agg_def}, regardless of the actual smoother $M^{-1}$ used by the solver; this is shown in the next section.
To this end, we now define the SPD operator \(M_{\omega {\rm J}}\) and present a convenient representation for the associated induced norm,\footnote{Throughout this paper \(\| \cdot \|_{D_{\Omega_i}A_{\Omega_i}D_{\Omega_i}}\) and \(\| \cdot \|_{\wt{S}_i}\) are only semi-norms because \(D_{\Omega_i}A_{\Omega_i}D_{\Omega_i}\) and \(\wt{S}_i\) are SPSD rather than SPD.
In particular, the nullspace of the former is characterized by vectors on $\Omega_i$ that are supported only in $\Gamma_i$ (see, e.g., \eqref{eq:gep_local}), while the nullspace of the latter is $\ker (\wt{S}_i)$.
}
\begin{equation}
\label{eq:Mjac_def}
M_{\omega {\rm J}}
:=
\sum_{i=1}^{n_{\rm agg}} R_{\omega_i}^{\top}A_{\omega_i,\omega_i}R_{\omega_i},
\quad
\|\bm z\|_{M_{\omega {\rm J}}}^2
=
\sum_{i=1}^{n_{\rm agg}}\|R_{\omega_i}\bm z\|_{A_{\omega_i,\omega_i}}^2
=
\sum_{i=1}^{n_{\rm agg}}\|R_{\Omega_i}\bm z\|_{D_{\Omega_i}A_{\Omega_i}D_{\Omega_i}}^2.
\end{equation}
\section{General two-level theory}
\label{sec:two_level}

In this section we develop a two-level convergence theory based on the sharp two-level convergence theory outlined in \cref{sec:two_level_background}. 
The argument is organized as follows.
In \cref{sec:conv-roadmap} we detail how to map from a WAP in the \(M_{\omega{\rm J}}\)-norm to a two-level convergence result via a sequence of metric transfers. 
Then, in \cref{subsec:two_level_approx_property}, we prove the required \(M_{\omega{\rm J}}\)-approximation property, and in \Cref{subsec:two_level_theorem} we combine these two ingredients to obtain general two-level convergence theory.
In \cref{sec:nonsymmetric_smoother_case} we consider the application of the theory to nonsymmetric smoothers.
A specialized convergence theory for symmetric smoothers is then presented in \cref{sec:symM}, and this is extended to the case of damped symmetric smoothers in \cref{sec:symM_damped}.

\subsection{Two-level convergence from a block-Jacobi approximation property}
\label{sec:conv-roadmap}

Convergence in \cref{thm:falgout_tg} is phrased in terms of the \(\widetilde M\)-weak approximation property. 
In this section we describe how to pass from a  \(M_{\omega{\rm J}}\)-WAP bound to the \(\widetilde M\)-metric, and hence to the two-level constant \(K_{\mathrm{TG}}\). 
Transferring WAP constants between the different metrics naturally introduces a norm-comparison constant ${\cal N}$ as follows.

\begin{lemma}[WAP transfer]
\label{lem:wap_transfer}
Let ${\cal W}_{\cal M}$ be as in \cref{def:wap_calM}, and let \(\mathcal M_1, \mathcal M_2 \in \mathbb{R}^{n \times n}\) be SPD, and define 
\(
\mathcal N_{\mathcal M_1,\mathcal M_2}
:=
\min\{\,c>0:\ \mathcal M_1\preceq c\,\mathcal M_2\,\}.
\)
Then \(\mathcal W_{\mathcal M_1}\le \mathcal N_{\mathcal M_1,\mathcal M_2}\,\mathcal W_{\mathcal M_2}\).
\end{lemma}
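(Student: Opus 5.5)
The plan is to combine the minimization characterization of the WAP constant from \cref{def:wap_calM} with the definition of $\mathcal N_{\mathcal M_1,\mathcal M_2}$. The key observation is that $\|(I-\Pi_{\mathcal M})\bm v\|_{\mathcal M}^2=\min_{\bm v_c\in\range(P)}\|\bm v-\bm v_c\|_{\mathcal M}^2$. This holds because $\Pi_{\mathcal M}$ is the $\mathcal M$-orthogonal projector onto $\range(P)$, and it gives the first equality in \eqref{eq:wap_def}. We then compare the two minimizations by evaluating the $\mathcal M_1$-functional at the $\mathcal M_2$-minimizer, rather than comparing the projectors directly.

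Fix $\bm v\in\mathbb R^n$ and set $\bm v_c^\ast:=\Pi_{\mathcal M_2}\bm v\in\range(P)$. Since the $\mathcal M_1$-error is a minimum over $\range(P)$, it is bounded by its value at the particular element $\bm v_c^\ast$:
\begin{equation*}
\|(I-\Pi_{\mathcal M_1})\bm v\|_{\mathcal M_1}^2
\le \|\bm v-\bm v_c^\ast\|_{\mathcal M_1}^2 .
\end{equation*}
Next we apply $\mathcal M_1\preceq \mathcal N_{\mathcal M_1,\mathcal M_2}\mathcal M_2$ to the vector $\bm v-\bm v_c^\ast$. This gives the bound $\mathcal N_{\mathcal M_1,\mathcal M_2}\|(I-\Pi_{\mathcal M_2})\bm v\|_{\mathcal M_2}^2$. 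By the definition of $\mathcal W_{\mathcal M_2}$ as the smallest constant in \eqref{eq:wap_def}, this is at most $\mathcal N_{\mathcal M_1,\mathcal M_2}\mathcal W_{\mathcal M_2}\|\bm v\|_A^2$. Hence $\mathcal N_{\mathcal M_1,\mathcal M_2}\mathcal W_{\mathcal M_2}$ is an admissible constant for the $\mathcal M_1$-WAP, and minimality of $\mathcal W_{\mathcal M_1}$ gives the claim.

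A few well-posedness points need a brief check. The minimum defining $\mathcal N_{\mathcal M_1,\mathcal M_2}$ exists and is finite: it equals $\lambda_{\max}(\mathcal M_2^{-1}\mathcal M_1)$, because $\mathcal M_2$ is SPD, and the set of admissible $c$ is closed. Also, $\mathcal W_{\mathcal M}$ is finite and attained, since it equals the maximum of a Rayleigh-type quotient $\|(I-\Pi_{\mathcal M})\bm v\|_{\mathcal M}^2/\|\bm v\|_A^2$ over the unit sphere, with $A$ SPD.

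There is no substantive obstacle. The one point requiring care is the justification of the minimization identity, namely that $\Pi_{\mathcal M}$ is the $\mathcal M$-orthogonal projector. To verify it, check that $\Pi_{\mathcal M}^2=\Pi_{\mathcal M}$ and that $\mathcal M\Pi_{\mathcal M}$ is symmetric. The Pythagorean identity in the $\mathcal M$-inner product then shows that $\Pi_{\mathcal M}\bm v$ is the best approximation from $\range(P)$.
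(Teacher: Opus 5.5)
Your proposal is correct and follows essentially the same argument as the paper, which applies the pointwise bound \(\|\bm z\|_{\mathcal M_1}^2\le \mathcal N_{\mathcal M_1,\mathcal M_2}\|\bm z\|_{\mathcal M_2}^2\) to compare the two minima over \(\range(P)\) directly. Evaluating the \(\mathcal M_1\)-functional at \(\Pi_{\mathcal M_2}\bm v\), as you do, just makes that comparison of minima explicit, and your well-posedness remarks are harmless additions.
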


\begin{proof}
By definition of \(\mathcal N_{\mathcal M_1,\mathcal M_2}\), one has \(\mathcal M_1\preceq \mathcal N_{\mathcal M_1,\mathcal M_2}\,\mathcal M_2\), and hence \(\|\bm z\|_{\mathcal M_1}^2\le \mathcal N_{\mathcal M_1,\mathcal M_2}\,\|\bm z\|_{\mathcal M_2}^2\) for every \(\bm z\). Therefore, for every \(\bm v\),
\[
\min_{\bm v_c\in\range(P)}\|\bm v-\bm v_c\|_{\mathcal M_1}^2
\le
\mathcal N_{\mathcal M_1,\mathcal M_2}
\min_{\bm v_c\in\range(P)}\|\bm v-\bm v_c\|_{\mathcal M_2}^2
\le
\mathcal N_{\mathcal M_1,\mathcal M_2}\,\mathcal W_{\mathcal M_2}\,\|\bm v\|_A^2.
\]
Taking the max over \(\bm v\neq 0\) completes the proof.
\end{proof}

Note that \(\mathcal N_{\mathcal M_1,\mathcal M_2}\) is also characterized by a maximum generalized eigenvalue.

\begin{lemma}[Norm-comparison constant]
\label{lem:calN-eig-form}
For SPD matrices \(\mathcal M_1, \mathcal M_2 \in \mathbb{R}^{n \times n}\),
\begin{equation}\label{eq:NM1M2}
\mathcal N_{\mathcal M_1,\mathcal M_2}
:=
\min\{\,c>0:\ \mathcal M_1\preceq c\,\mathcal M_2\,\}
=
\lambda_{\max}(\mathcal M_2^{-1}\mathcal M_1)
=
\sup_{\bm x\ne 0}\frac{\bm x^\top \mathcal M_1 \bm x}{\bm x^\top \mathcal M_2 \bm x}.
\end{equation}
\end{lemma}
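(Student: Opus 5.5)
The plan is to establish the three equalities in \eqref{eq:NM1M2} by reducing them to a standard symmetric eigenvalue problem through a congruence with \(\mathcal M_2^{-1/2}\). Since \(\mathcal M_2\) is SPD, it has a unique SPD square root \(\mathcal M_2^{1/2}\). We set \(\bm y=\mathcal M_2^{1/2}\bm x\); this is a bijection of \(\mathbb R^n\setminus\{0\}\). Under this substitution the Rayleigh-type quotient becomes
\[
\frac{\bm x^\top\mathcal M_1\bm x}{\bm x^\top\mathcal M_2\bm x}
=
\frac{\bm y^\top B\bm y}{\bm y^\top\bm y},
\qquad
B:=\mathcal M_2^{-1/2}\mathcal M_1\mathcal M_2^{-1/2}.
\]
Here \(B\) is symmetric (indeed SPD). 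By the Courant--Fischer (Rayleigh) characterization, the supremum is attained and equals \(\lambda_{\max}(B)\).

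Next we identify \(\lambda_{\max}(B)\) with \(\lambda_{\max}(\mathcal M_2^{-1}\mathcal M_1)\). We have
\(\mathcal M_2^{-1}\mathcal M_1=\mathcal M_2^{-1/2}B\,\mathcal M_2^{1/2}\),
so the two matrices are similar. They therefore share the same (real, positive) spectrum, and \(\lambda_{\max}\) is well defined and equal for both. This gives the second equality.

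Finally we show that \(\min\{c>0:\mathcal M_1\preceq c\,\mathcal M_2\}\) equals the supremum \(s\) of the quotient. The condition \(\mathcal M_1\preceq c\,\mathcal M_2\) is equivalent to
\(\bm x^\top\mathcal M_1\bm x\le c\,\bm x^\top\mathcal M_2\bm x\) for all \(\bm x\). Because \(\bm x^\top\mathcal M_2\bm x>0\) for \(\bm x\neq 0\), this holds if and only if \(c\ge\) the quotient for every \(\bm x\neq0\), that is, if and only if \(c\ge s\). Hence the admissible set is exactly \([s,\infty)\). Since \(s>0\) (because \(\mathcal M_1\) is SPD), the minimum exists and equals \(s\). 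This also confirms that the minimum in the definition of \(\mathcal N_{\mathcal M_1,\mathcal M_2}\) is attained rather than just an infimum.

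There is no real obstacle in this argument. The only points needing care are that the supremum is attained, which holds by compactness of the unit sphere or directly by Courant--Fischer, and that the admissible set of \(c\) is closed, so the minimum in the definition is genuinely a minimum.
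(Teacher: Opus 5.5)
Your proof is correct and follows essentially the same route as the paper: congruence by \(\mathcal M_2^{-1/2}\) reduces everything to the symmetric matrix \(\mathcal M_2^{-1/2}\mathcal M_1\mathcal M_2^{-1/2}\), and its largest eigenvalue is then identified with \(\lambda_{\max}(\mathcal M_2^{-1}\mathcal M_1)\) by similarity. The only cosmetic difference is that you link the minimal \(c\) to the Rayleigh-quotient supremum directly through quadratic forms, which makes the third equality explicit, whereas the paper phrases this step as \(\mathcal M_2^{-1/2}\mathcal M_1\mathcal M_2^{-1/2}\preceq cI\) and leaves the Rayleigh characterization implicit.
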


\begin{proof}
Since \(\mathcal M_2\) is SPD, \(\mathcal M_2^{1/2}\) is invertible. 
Now
\(\mathcal M_1\preceq c\,\mathcal M_2\) if and only if
\(\mathcal M_2^{-1/2}\mathcal M_1\mathcal M_2^{-1/2}\preceq cI\). Since
\(\mathcal M_2^{-1/2}\mathcal M_1\mathcal M_2^{-1/2}\) is SPD, the smallest such \(c\)
is its largest eigenvalue, i.e.
\(
\mathcal N_{\mathcal M_1,\mathcal M_2}
=
\lambda_{\max}(\mathcal M_2^{-1/2}\mathcal M_1\mathcal M_2^{-1/2}).
\)
Finally,
\(\mathcal M_2^{-1}\mathcal M_1\) is similar to
\(\mathcal M_2^{-1/2}\mathcal M_1\mathcal M_2^{-1/2}\), so they have the same
eigenvalues.
\end{proof}

We can now pass from a \(M_{\omega{\rm J}}\)-WAP to the constant $K_{\rm TG}$ from \cref{thm:falgout_tg}.

\begin{lemma}[$K_{\rm TG}$ from \(M_{\omega{\rm J}}\)-WAP]
\label{lem:KTG_direct_from_WAP_MJac}
Assume 
\(
\rho_M:=\|I-M^{-1}A\|_A<1
\). Then
\[
K_{\mathrm{TG}}
=
\mathcal W_{\widetilde M}
\le
\lambda_{\max}(M_{\omega{\rm J}}^{-1}\widetilde M)\,
\mathcal W_{M_{\omega{\rm J}}}
\quad
\Longrightarrow
\quad
\|E_{\mathrm{TG}}\|_A
\le
1-\frac{1}{
\lambda_{\max}( M^{-1}_{\omega{\rm J}} \widetilde M)
}
\frac{1}{\mathcal W_{M_{\omega{\rm J}}}}
.
\]
\end{lemma}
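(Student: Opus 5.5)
The plan is to chain together \cref{thm:falgout_tg}, \cref{lem:wap_transfer} and \cref{lem:calN-eig-form}. The equality \(K_{\mathrm{TG}}=\mathcal W_{\widetilde M}\) is part of \cref{thm:falgout_tg}, so only the inequality and the resulting norm bound need proof.

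First we must confirm that \(\widetilde M\) is SPD, since both transfer lemmas require SPD metrics. The assumption \(\rho_M<1\) is equivalent to \(M+M^\top-A\succ 0\) (\cite[Thm. 3.1]{vassilevski2010lecturenotes}). Hence \((M+M^\top-A)^{-1}\) is SPD. Moreover \(M\) is nonsingular, because \(M^{-1}\) appears in the iteration. It follows that \(\widetilde M=M^\top(M+M^\top-A)^{-1}M\) is a congruence of an SPD matrix and is therefore SPD.

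Next we check that \(M_{\omega{\rm J}}\) is SPD. It is block diagonal with respect to the aggregate partition \eqref{eq:agg_def}, and its blocks are the principal submatrices \(A_{\omega_i,\omega_i}\) of the SPD matrix \(A\). Each block is SPD, and so is \(M_{\omega{\rm J}}\).

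With both metrics SPD, we apply \cref{lem:wap_transfer} with \(\mathcal M_1=\widetilde M\) and \(\mathcal M_2=M_{\omega{\rm J}}\). This gives \(\mathcal W_{\widetilde M}\le \mathcal N_{\widetilde M,M_{\omega{\rm J}}}\mathcal W_{M_{\omega{\rm J}}}\). By \cref{lem:calN-eig-form}, \(\mathcal N_{\widetilde M,M_{\omega{\rm J}}}=\lambda_{\max}(M_{\omega{\rm J}}^{-1}\widetilde M)\), which yields the stated inequality.

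For the implication, set \(\mathcal C_{\widetilde M}:=\lambda_{\max}(M_{\omega{\rm J}}^{-1}\widetilde M)\,\mathcal W_{M_{\omega{\rm J}}}\). The inequality just proved shows that \(\range(P)\) satisfies the \(\widetilde M\)-WAP with this constant. Alternatively, since \(\|E_{\mathrm{TG}}\|_A=1-1/K_{\mathrm{TG}}\) and the map \(t\mapsto 1-1/t\) is increasing for \(t>0\), the bound \(K_{\mathrm{TG}}\le \mathcal C_{\widetilde M}\) gives \(\|E_{\mathrm{TG}}\|_A\le 1-1/\mathcal C_{\widetilde M}\) directly. Splitting \(1/\mathcal C_{\widetilde M}\) into its two factors gives the displayed form.

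There is no real obstacle here. The only point that needs care is verifying the SPD hypotheses, in particular that \(\widetilde M\) is SPD, which is where the assumption \(\rho_M<1\) enters. If \(\mathcal W_{M_{\omega{\rm J}}}\) were infinite the bound would be vacuous, but it is finite automatically because \(M_{\omega{\rm J}}\preceq c A\) holds for some \(c\) in finite dimensions.
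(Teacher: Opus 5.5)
Your proposal is correct and follows essentially the same route as the paper. You take $K_{\mathrm{TG}}=\mathcal W_{\widetilde M}$ from \cref{thm:falgout_tg}, apply \cref{lem:wap_transfer} with $(\mathcal M_1,\mathcal M_2)=(\widetilde M,M_{\omega{\rm J}})$ together with \cref{lem:calN-eig-form}, and substitute into $\|E_{\mathrm{TG}}\|_A=1-1/K_{\mathrm{TG}}$. The SPD checks also match the paper: $M_{\omega{\rm J}}$ is block diagonal with principal submatrices of $A$, and $\widetilde M$ is congruent to $(M+M^\top-A)^{-1}$.
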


\begin{proof}
By \cref{thm:falgout_tg}, \(K_{\mathrm{TG}}=\mathcal W_{\widetilde M}\). Applying \cref{lem:wap_transfer} with \((\mathcal M_1,\mathcal M_2)=(\widetilde M,M_{\omega{\rm J}})\) gives
\(
\mathcal W_{\widetilde M}
\le
\mathcal N_{\widetilde M,M_{\omega{\rm J}}}\,\mathcal W_{M_{\omega{\rm J}}}
=
\lambda_{\max}(M_{\omega{\rm J}}^{-1}\widetilde M)\,\mathcal W_{M_{\omega{\rm J}}},
\)
where we used the characterization of $\mathcal N_{\widetilde M,M_{\omega{\rm J}}}$ given in \cref{lem:calN-eig-form}.
Substituting the resulting estimate for \(K_{\mathrm{TG}}\) into \(\|E_{\mathrm{TG}}\|_A=1-\frac{1}{K_{\mathrm{TG}}}\) completes the proof.

To apply this result we require that the matrices \((\mathcal M_1,\mathcal M_2)=(\widetilde M,M_{\omega{\rm J}})\) are SPD.
For the case of \(M_{\omega{\rm J}}\) in \eqref{eq:Mjac_def} this is immediate since it is a block-diagonal matrix with SPD diagonal blocks (principal submatrices of SPD $A$).
Under the assumption that \(\rho_M<1 \iff M + M^{\top} - A \succ 0\) \cite[Thm. 3.1]{vassilevski2010lecturenotes}, the symmetrized smoother $\wt{M} = M^\top (M + M^\top - A)^{-1} M$ is SPD and congruent to the SPD matrix \( (M + M^\top - A)^{-1}\).
\end{proof}

\Cref{lem:KTG_direct_from_WAP_MJac} shows that two-level convergence can be bounded by the block Jacobi WAP constant \({\mathcal W_{M_{\omega{\rm J}}}}\), and the norm-comparison factor \(\lambda_{\max}(M_{\omega{\rm J}}^{-1}\widetilde M)\) between the block Jacobi smoother and the symmetrized smoother.
Next in \cref{subsec:two_level_approx_property} we bound the first of these two factors. 
\subsection{A weak approximation property in the block-Jacobi metric}
\label{subsec:two_level_approx_property}

In this section we construct the \(M_{\omega {\rm J}}\)-orthogonal projector \(\Pi_{M_{\omega {\rm J}}}\) onto \(\range(P)\), for \(M_{\omega {\rm J}}\) in \eqref{eq:Mjac_def}, and then prove
\(
\|(I-\Pi_{M_{\omega {\rm J}}})\bm v\|_{M_{\omega {\rm J}}}^2
\le
\tau_{\max}\,\|\bm v\|_A^2,
\,\,
\forall \bm v\in\mathbb R^n.
\)
The argument proceeds in three steps: first, the cutoff yields a local complement inequality on each aggregate (see \cref{sec:jac_cutoff}); second, these local retained components are assembled into the global projector \(\Pi_{M_{\omega {\rm J}}}\) using the block structure of \(P\) (see \cref{sec:jac_projector}); and third, the local estimates are summed to yield the desired result (see \cref{sec:jac_wap}).

\subsubsection{Characterizing the local cutoff threshold}
\label{sec:jac_cutoff}

The first step is to make explicit what the cutoff parameter $\tau_{\rm cut} \geq 1$ in \cref{def:coarse_space} controls on each aggregate through the reduced local generalized eigenproblem \eqref{eq:gep_reduced}.
To this end, let
\(
\mathcal Z_i(\tau_{\rm cut}):=\range(Z_i)\subseteq\mathbb{R}^{|\omega_i|}
\)
denote the selected local coarse space, with $Z_i$ defined in \eqref{eq:Z-def}.
Further, for each $i$, write
\(
A_i:=A_{\omega_i,\omega_i}
\)
and let $\Pi_i$ denote the $A_i$-orthogonal projector onto $\mathcal Z_i(\tau_{\rm cut})$:
\begin{align} \label{eq:Pi_i}
\Pi_i
:=
Z_i\bigl(Z_i^{\top}A_i Z_i\bigr)^{-1} Z_i^{\top}A_i.
\end{align}
Recall the Schur complement $\wt{S}_i$ defined in \eqref{eq:gep_reduced}.

\begin{lemma}[Local complement inequality]
\label{lem:local_complement_ineq}
Let $\tau_i$ be as in \eqref{eq:tau_i_def}.
Then
\begin{equation}
\label{eq:local_complement_ineq}
\bigl\|(I-\Pi_i)\bm v_i\bigr\|_{A_{i}}^2
\le
\tau_i \,\bigl\|(I-\Pi_i)\bm v_i\bigr\|_{\widetilde S_i}^2
\quad
\forall \bm v_i\in\mathbb{R}^{|\omega_i|}.
\end{equation}
Moreover,
\[
\sup_{\bm v_i\in\mathbb{R}^{|\omega_i|} \, :\, (I-\Pi_i)\bm v_i\neq 0}
\frac{\|(I-\Pi_i)\bm v_i\|_{A_{\omega_i,\omega_i}}^2}
{\|(I-\Pi_i)\bm v_i\|_{\widetilde S_i}^2}
=
\tau_i.
\]
\end{lemma}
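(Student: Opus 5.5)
Since $A_i$ is SPD and $\wt S_i$ is SPSD, the pencil $(A_i,\wt S_i)$ can be simultaneously diagonalized. Write $V_i=[\,Z_i^{\rm ker}\;\, \Phi_i^{\rm disc}\;\, Z_i^{\rm fin}\,]$ for the full basis of $\mathbb R^{|\omega_i|}$ from \cref{def:coarse_space}, where $\Phi_i^{\rm disc}:=(\bm\varphi_{i,k})_{k\in\mathcal J_i^{\rm disc}}$. I would first arrange that $V_i^\top A_i V_i = I$ and that $V_i^\top \wt S_i V_i=\operatorname{diag}(0,\Lambda^{-1})$, with zeros on the kernel block and $\lambda_{i,k}^{-1}$ on the finite modes. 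This is always possible. The $A_i$-orthonormalization inside $\ker(\wt S_i)$ does not change its span, and eigenvectors of distinct eigenvalues are automatically $A_i$- and $\wt S_i$-orthogonal. Within a repeated eigenvalue, or inside the kernel, we may re-choose the basis without altering $\range(Z_i)$, because the threshold splits eigenvalues strictly ($\le\tau_{\rm cut}$ versus $>\tau_{\rm cut}$), so no eigenspace straddles the cut. Every finite eigenvalue satisfies $\lambda\ge 1>0$, so the finite modes are exactly the nonkernel modes.

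In this basis $\Pi_i$ is the $A_i$-orthogonal projector onto the span of the kernel and kept columns. Hence $(I-\Pi_i)\bm v_i=\Phi_i^{\rm disc}\bm c$ for some coefficient vector $\bm c$, namely the discarded coordinates of $V_i^{-1}\bm v_i$, and conversely every vector in $\range(\Phi_i^{\rm disc})$ arises this way (take $\bm v_i$ itself). For $\bm w=\Phi_i^{\rm disc}\bm c$ this gives $\|\bm w\|_{A_i}^2=\sum_k c_k^2$ and $\|\bm w\|_{\wt S_i}^2=\sum_k c_k^2/\lambda_{i,k}$. Since $\lambda_{i,k}\le\tau_i$ for every discarded $k$, we get $\sum_k c_k^2\le \tau_i\sum_k c_k^2/\lambda_{i,k}$, which is \eqref{eq:local_complement_ineq}. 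The same computation shows that $(I-\Pi_i)\bm v_i\ne0$ forces $\|(I-\Pi_i)\bm v_i\|_{\wt S_i}>0$, so the quotient in the supremum is well defined.

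For sharpness, take $\bm v_i=\bm\varphi_{i,k^*}$ with $k^*$ attaining the maximum in \eqref{eq:tau_i_def}; this index exists because $\mathcal J_i^{\rm disc}\neq\emptyset$ is assumed. Then $(I-\Pi_i)\bm v_i=\bm v_i$, and the quotient equals $\lambda_{i,k^*}=\tau_i$, so the supremum is attained and equals $\tau_i$.

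The main obstacle is the first step: justifying the simultaneous diagonalization with a singular right-hand matrix $\wt S_i$, and checking that $\range(Z_i)$ coincides with the span of the corresponding $A_i$-orthonormal modes. I would handle this by applying the spectral theorem to $A_i^{-1/2}\wt S_iA_i^{-1/2}$. Its eigenvalues $\mu\in[0,1]$ correspond to $\lambda=1/\mu$, with $\mu=0$ exactly the kernel, and \eqref{eq:gep_reduced} is equivalent to this symmetric eigenproblem.
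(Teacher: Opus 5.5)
Your proof is correct and follows essentially the paper's argument: you expand $(I-\Pi_i)\bm v_i$ in the discarded generalized eigenvectors, compare the two quadratic forms mode by mode, and test the top discarded mode for sharpness. The differences are minor. You normalize the modes in $A_i$ rather than in $\widetilde S_i$, which is cosmetic. You also justify more carefully than the paper the simultaneous diagonalization, the re-choice of basis within eigenspaces (which leaves $\range(Z_i)$ unchanged because the cut is strict), and the nonvanishing of the denominator in the supremum.
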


\begin{proof}
First we recall the local objects specified in \cref{def:coarse_space}.
Specifically, 
\(
Z_i
    :=
    [
        Z_i^{\rm ker} \; Z_i^{\rm fin}
    ],
    \)
    with columns of 
    \(
    Z_i^{\rm ker}
    :=
    ( \bm \psi_{i,k} )_{k\in\mathcal J_i^{\rm ker}}
    \)
being a basis for $\ker(\widetilde S_i)$, and the columns of \(
    Z_i^{\rm fin}
    :=
    ( \bm \varphi_{i,k} )_{k\in\mathcal J_i^{\rm keep}}
\)
being the retained finite eigenvectors of \eqref{eq:gep_reduced}.
Because $A_i$ is SPD and $\widetilde S_i$ is SPSD, the finite generalized eigenvectors of \eqref{eq:gep_reduced} can be chosen to be an $A_i$-orthogonal set.
Specifically, on the finite-eigenvalue subspace, where $\widetilde S_i$ is SPD, we have
\begin{equation}
\label{eq:finite_gep_orthonormality}
\bm \varphi_{i,k}^{\top}\widetilde S_i\bm \varphi_{i,\ell}=\delta_{k\ell},
\qquad
\bm \varphi_{i,k}^{\top}A_i\bm \varphi_{i,\ell}=\lambda_{i,k}\,\delta_{k\ell},
\qquad
k,\ell\in\mathcal J_i^{\rm fin}.
\end{equation}
Moreover, if $\bm \psi\in\ker(\widetilde S_i)$ and $A_i\bm \varphi_{i,k}=\lambda_{i,k}\widetilde S_i\bm \varphi_{i,k}$ with finite $\lambda_{i,k}$, then
\[
\bm \psi^{\top}A_i\bm \varphi_{i,k}
=
\lambda_{i,k}\,\bm \psi^{\top}\widetilde S_i\bm \varphi_{i,k}
=
0,
\]
so the $+\infty$ modes are $A_i$-orthogonal to all finite generalized eigenvectors.

Given a vector $\bm{v}_i$ and the projector $\Pi_i$ in \eqref{eq:Pi_i}, $\Pi_i \bm{v}_i$ is the $A_i$-orthogonal projection of $\bm{v}_i$ onto the retained local space \(
\mathcal Z_i(\tau_{\rm cut}):=\range(Z_i)
\), and hence $(I-\Pi_i)\bm v_i$ is exactly the component of $\bm v_i$ in the span of the discarded finite modes. 
Hence
\(
(I-\Pi_i)\bm v_i
=
\sum_{k\in\mathcal J_i^{\rm disc}} \alpha_k\,\bm \varphi_{i,k},
\)
for some coefficients $\{\alpha_k\}$.
By construction of the cutoff, every such mode in this sum has finite eigenvalue $\lambda_{i,k}\le \tau_i$. Using this along with \eqref{eq:finite_gep_orthonormality}, we obtain
\begin{align*}
&\bigl\|(I-\Pi_i)\bm v_i\bigr\|_{A_i}^2
=
\Bigl(
\sum_{k\in\mathcal J_i^{\rm disc}} \alpha_k \bm \varphi_{i,k}\, ,
\,
\sum_{k\in\mathcal J_i^{\rm disc}} \alpha_k A_i \bm \varphi_{i,k} \Bigr)
\\
&\quad=
\sum_{k\in\mathcal J_i^{\rm disc}}\lambda_{i,k}\,\alpha_k^2
\le
\tau_i\sum_{k\in\mathcal J_i^{\rm disc}}\alpha_k^2
=
\tau_i\,\bigl\|(I-\Pi_i)\bm v_i\bigr\|_{\widetilde S_i}^2.
\end{align*}
This proves the inequality \eqref{eq:local_complement_ineq} for arbitrary $\bm v_i$.
Then note that the result holds with equality for the particular choice $\bm v_i = \bm \varphi_{i, k_{\rm max}}$, where $k_{\max} = \argmax_{k \in {\cal J}_i^{\rm disc}} \lambda_{i,k}$, which proves the supremum characterization.
\end{proof}

\subsubsection{An orthogonal projector onto the range of interpolation}
\label{sec:jac_projector}

The LS-AMG-DD interpolation operator from \cref{def:coarse_space} is 
\(
P
=
\bigl[
R_{\omega_1}^{\top}Z_1
\ \cdots\ 
R_{\omega_{n_{\rm agg}}}^{\top}Z_{n_{\rm agg}}
\bigr].
\)
Because aggregates $\{ \omega_i \}$ are disjoint, $P$ is block-diagonal by aggregate, such that its action can be written as a sum over aggregate-local pieces and its range can naturally be considered as a direct sum of aggregate-local ranges; that is, given a coarse vector $\bm{c}$ partitioned by aggregate as
\(
\bm c=
(
\bm c_1
\,
,
\,
\ldots
\,
,
\,
\bm c_{n_{\rm agg}}
)^\top
\),
one has
\begin{equation}
\label{eq:P_action_blockwise}
P\bm c
=
\sum_{i=1}^{n_{\rm agg}} R_{\omega_i}^{\top} Z_i \bm c_i,
\qquad
\range(P)
=
\bigoplus_{i=1}^{n_{\rm agg}} R_{\omega_i}^{\top}\range(Z_i).
\end{equation}
Next we assemble the retained local components into a global coarse-space operator. 

\begin{lemma}%
\label{lem:Q0_basic}
Given \(M_{\omega {\rm J}}\) in \eqref{eq:Mjac_def} and $\Pi_i$ in \eqref{eq:Pi_i}, define $\Pi_{M_{\omega {\rm J}}}\, : \, \mathbb{R}^n \to \mathbb{R}^n$ by
\begin{equation}
\label{eq:Q0_def}
\Pi_{M_{\omega {\rm J}}}:=\sum_{i=1}^{n_{\rm agg}} R_{\omega_i}^{\top}\Pi_i R_{\omega_i}.
\end{equation}
Then $\Pi_{M_{\omega {\rm J}}}$ is the $M_{\omega {\rm J}}$-orthogonal projector onto $\range(P)$ for $P$ defined by \eqref{eq:lsamgdd_interpolation}.
Moreover, the complement $(I - \Pi_{M_{\omega {\rm J}}})$ on aggregate $\omega_i$ satisfies
\begin{equation}
\label{eq:Q0_local_remainder}
R_{\omega_i}(I-\Pi_{M_{\omega {\rm J}}})
=
(I-\Pi_i)R_{\omega_i},
\qquad
i=1,\dots,n_{\rm agg}.
\end{equation}
\end{lemma}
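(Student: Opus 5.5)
The plan is to verify the three defining properties of an $M_{\omega{\rm J}}$-orthogonal projector onto $\range(P)$ directly from the block structure. The only facts needed are $R_{\omega_i}R_{\omega_k}^{\top}=\delta_{ik}I_{|\omega_i|}$ (disjointness of the aggregates), $\sum_i R_{\omega_i}^{\top}R_{\omega_i}=I$, and the block-diagonal form $M_{\omega{\rm J}}=\sum_i R_{\omega_i}^{\top}A_iR_{\omega_i}$ from \eqref{eq:Mjac_def}. Each local $\Pi_i$ in \eqref{eq:Pi_i} is well defined because $A_i$ is SPD and $Z_i$ has full column rank: its columns are part of a basis of $\mathbb R^{|\omega_i|}$ by \cref{def:coarse_space}. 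Hence $\Pi_i$ is the $A_i$-orthogonal projector onto $\range(Z_i)$, so $\Pi_i^2=\Pi_i$ and $A_i\Pi_i=\Pi_i^{\top}A_i$.

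I will first prove \eqref{eq:Q0_local_remainder}, since it drives everything else. Multiplying \eqref{eq:Q0_def} on the left by $R_{\omega_i}$ and using $R_{\omega_i}R_{\omega_k}^{\top}=\delta_{ik}I$ gives $R_{\omega_i}\Pi_{M_{\omega{\rm J}}}=\Pi_iR_{\omega_i}$. Subtracting from $R_{\omega_i}$ yields the identity.

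Next I will establish the projector properties in turn.

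\emph{Idempotence.} Using the same orthogonality, $\Pi_{M_{\omega{\rm J}}}^2=\sum_i R_{\omega_i}^{\top}\Pi_i^2R_{\omega_i}=\Pi_{M_{\omega{\rm J}}}$.

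\emph{Self-adjointness in the $M_{\omega{\rm J}}$-inner product.} We have
\[
M_{\omega{\rm J}}\Pi_{M_{\omega{\rm J}}}=\sum_i R_{\omega_i}^{\top}A_i\Pi_iR_{\omega_i}=\sum_i R_{\omega_i}^{\top}\Pi_i^{\top}A_iR_{\omega_i}=\Pi_{M_{\omega{\rm J}}}^{\top}M_{\omega{\rm J}},
\]
again because cross terms vanish.

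\emph{Range.} For any $\bm v$, the vector $\Pi_{M_{\omega{\rm J}}}\bm v=\sum_iR_{\omega_i}^{\top}Z_i\bm c_i$ with $\bm c_i=(Z_i^{\top}A_iZ_i)^{-1}Z_i^{\top}A_iR_{\omega_i}\bm v$. By \eqref{eq:P_action_blockwise} this lies in $\range(P)$. Conversely, for $\bm w=P\bm c=\sum_kR_{\omega_k}^{\top}Z_k\bm c_k$ we have $R_{\omega_i}\bm w=Z_i\bm c_i\in\range(Z_i)$. Then $\Pi_{M_{\omega{\rm J}}}\bm w=\sum_iR_{\omega_i}^{\top}\Pi_iZ_i\bm c_i=\sum_iR_{\omega_i}^{\top}Z_i\bm c_i=\bm w$. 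So the range is exactly $\range(P)$.

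An $M_{\omega{\rm J}}$-self-adjoint idempotent with range $\range(P)$ is the unique $M_{\omega{\rm J}}$-orthogonal projector $\Pi_{M_{\omega{\rm J}}}(P)=P(P^{\top}M_{\omega{\rm J}}P)^{-1}P^{\top}M_{\omega{\rm J}}$, which finishes the proof. Alternatively, one could verify this formula directly by noting that $P^{\top}M_{\omega{\rm J}}P=\operatorname{blkdiag}(Z_i^{\top}A_iZ_i)$.

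There is no real obstacle here. The only point needing care is the bookkeeping of the cross terms $R_{\omega_i}R_{\omega_k}^{\top}$, and I would state that identity once at the outset and reuse it throughout.
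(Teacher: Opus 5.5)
Your proposal is correct and follows essentially the same route as the paper. Both arguments identify the range via $\Pi_i Z_i = Z_i$ and the block structure of $P$, prove $M_{\omega{\rm J}}$-self-adjointness by expanding with vanishing cross terms $R_{\omega_i}R_{\omega_k}^{\top}=0$ for $i\neq k$, and obtain \eqref{eq:Q0_local_remainder} by restricting to $\omega_i$. Your explicit idempotence check and your remark that $Z_i$ has full column rank (so $\Pi_i$ is well defined) are minor additions that the paper leaves implicit.
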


\begin{proof}
By \eqref{eq:P_action_blockwise}, any vector in $\range(P)$ is obtained by choosing local pieces from $\range(Z_i)$ and assembling them.
Then observe that, by construction of $\Pi_{M_{\omega {\rm J}}}$, we have \(
\Pi_{M_{\omega {\rm J}}}\bm v = \sum_{i=1}^{n_{\rm agg}} R_{\omega_i}^{\top}\Pi_i R_{\omega_i}\bm v \coloneqq
\sum_{i=1}^{n_{\rm agg}} R_{\omega_i}^{\top}\bm u_i,
\)
with local retained pieces given by
\(
\bm u_i=\Pi_i R_{\omega_i}\bm v\in \range(Z_i).
\)
Hence, the assembled vector $\Pi_{M_{\omega {\rm J}}}\bm v$ lies in $\range(P)$.
Conversely, if $\bm p\in\range(P)$, then by \eqref{eq:P_action_blockwise} there exist local coefficient vectors $\{\bm c_i\}$ such that
\(
\bm p=\sum_{i=1}^{n_{\rm agg}} R_{\omega_i}^{\top}Z_i\bm c_i.
\)
Applying $\Pi_{M_{\omega {\rm J}}}$ and using $\Pi_i Z_i=Z_i$ gives
\[
\Pi_{M_{\omega {\rm J}}}\bm p
=
\sum_{i=1}^{n_{\rm agg}} R_{\omega_i}^{\top}\Pi_i Z_i\bm c_i
=
\sum_{i=1}^{n_{\rm agg}} R_{\omega_i}^{\top}Z_i\bm c_i
=
\bm p,
\]
so $\Pi_{M_{\omega {\rm J}}}$ is a projector onto $\range(P)$.

Since each $\Pi_i$ is the $A_i$-orthogonal projector onto $\range(Z_i)$, it is $A_i$-self-adjoint. Expanding using \eqref{eq:Mjac_def} and the definition of \(\Pi_{M_{\omega {\rm J}}}\) and noting $R_{\omega_i}R_{\omega_i}^\top = I$, while $R_{\omega_i}R_{\omega_j}^\top = \mathbf{0}$ for $j\neq i$, for any $\bm x,\bm y\in\mathbb{R}^n$ one has
\begin{align*}
&(\Pi_{M_{\omega {\rm J}}}\bm x)^\top M_{\omega {\rm J}}\bm y
=
\sum_{i=1}^{n_{\rm agg}} (R_{\omega_i}^\top\Pi_i R_{\omega_i}\bm x)^\top
\sum_{j=1}^{n_{\rm agg}} R_{\omega_j}^{\top}A_jR_{\omega_j}\bm y 
=
\sum_{i=1}^{n_{\rm agg}} \bm x^\top R_{\omega_i}^\top \Pi_i^\top A_i R_{\omega_i}\bm y
\\&\quad
=
\sum_{i=1}^{n_{\rm agg}} \bm x^\top R_{\omega_i}^\top A_i \Pi_i R_{\omega_i}\bm y
=
\sum_{i=1}^{n_{\rm agg}} \bm x^\top R_{\omega_i}^\top A_i R_{\omega_i}\sum_{j=1}^{n_{\rm agg}} R_{\omega_j}^\top\Pi_j R_{\omega_j}\bm y
=
\bm x^\top M_{\omega {\rm J}}\Pi_{M_{\omega {\rm J}}}\bm y.
\end{align*}
Combined with the projector property and the identity $\range(\Pi_{M_{\omega {\rm J}}})=\range(P)$ established above, this shows that $\Pi_{M_{\omega {\rm J}}}$ is the $M_{\omega {\rm J}}$-orthogonal projector onto $\range(P)$.
Because the aggregate interiors are disjoint, restricting \eqref{eq:Q0_def} to $\omega_i$ leaves only the $i$th term, yielding
\(
R_{\omega_i}\Pi_{M_{\omega {\rm J}}}=\Pi_iR_{\omega_i}.
\)
Subtracting this from $R_{\omega_i}$ yields \eqref{eq:Q0_local_remainder}.
\end{proof}

\subsubsection{The approximation property}
\label{sec:jac_wap}

In \cref{thm:jacobi_WAP} below we present the desired $M_{\omega {\rm J}}$ WAP.
First, however, the proof requires a variational characterization of the Schur complement $\wt{S}_i$ from \eqref{eq:gep_reduced}; we also recall the weighted Gram factor $H_{\Omega_i}$ that is used to define the local SPSD splitting matrix $\wt{A}_i$ in \eqref{eq:SPSD_splitting}.

\begin{lemma}%
\label{lem:schur_projected_ls}
Write
\(H_{\Omega_i}=[H_{\omega_i} \,\, H_{\Gamma_i}]\) according to the splitting
\(\Omega_i=\omega_i\cup\Gamma_i\) from \eqref{eq:overlap_def}.  
Let \(\Pi_{\Gamma_i}\) be the
Euclidean orthogonal projector onto \(\range(H_{\Gamma_i})\).  Then
\(\widetilde S_i=H_{\omega_i}^{\top}(I-\Pi_{\Gamma_i})H_{\omega_i}\), and for
every \(\bm z_{\omega_i}\in\mathbb R^{|\omega_i|}\),
\[
  \|\bm z_{\omega_i}\|_{\widetilde S_i}^2
  =
  \min_{\bm z_{\Gamma_i}\in\mathbb R^{|\Gamma_i|}}
  \bigl\|
    H_{\omega_i}\bm z_{\omega_i}
    +
    H_{\Gamma_i}\bm z_{\Gamma_i}
  \bigr\|_2^2 .
\]
\end{lemma}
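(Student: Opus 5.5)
The plan is to read off the blocks of \(\widetilde A_{\Omega_i}=H_{\Omega_i}^{\top}H_{\Omega_i}\) and then identify the Schur complement in \eqref{eq:gep_reduced} with a least-squares residual. With the column splitting \(H_{\Omega_i}=[H_{\omega_i}\;H_{\Gamma_i}]\), the blocks are
\[
\widetilde A_{\omega_i,\omega_i}=H_{\omega_i}^{\top}H_{\omega_i},\qquad
\widetilde A_{\omega_i,\Gamma_i}=H_{\omega_i}^{\top}H_{\Gamma_i},\qquad
\widetilde A_{\Gamma_i,\Gamma_i}=H_{\Gamma_i}^{\top}H_{\Gamma_i}.
\]
Substituting these into the definition of \(\widetilde S_i\) gives
\[
\widetilde S_i=H_{\omega_i}^{\top}\bigl(I-H_{\Gamma_i}(H_{\Gamma_i}^{\top}H_{\Gamma_i})^{\dagger}H_{\Gamma_i}^{\top}\bigr)H_{\omega_i}.
\]

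The first key step is the pseudoinverse identity \(H_{\Gamma_i}(H_{\Gamma_i}^{\top}H_{\Gamma_i})^{\dagger}H_{\Gamma_i}^{\top}=\Pi_{\Gamma_i}\). Here \(H_{\Gamma_i}\) may be rank deficient, which is the only point needing care. I would justify the identity through the SVD \(H_{\Gamma_i}=U\Sigma V^{\top}\) with \(\Sigma\) invertible on the rank-\(r\) block. Then \((H_{\Gamma_i}^{\top}H_{\Gamma_i})^{\dagger}=V_r\Sigma_r^{-2}V_r^{\top}\), and the product collapses to \(U_rU_r^{\top}\), which is the Euclidean orthogonal projector onto \(\range(H_{\Gamma_i})\). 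Equivalently, one can use \(H^{\dagger}=(H^{\top}H)^{\dagger}H^{\top}\) together with \(HH^{\dagger}=\Pi_{\range(H)}\). This yields the stated formula \(\widetilde S_i=H_{\omega_i}^{\top}(I-\Pi_{\Gamma_i})H_{\omega_i}\).

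For the variational characterization, fix \(\bm z_{\omega_i}\) and set \(\bm b:=H_{\omega_i}\bm z_{\omega_i}\). As \(\bm z_{\Gamma_i}\) ranges over \(\mathbb R^{|\Gamma_i|}\), the vector \(H_{\Gamma_i}\bm z_{\Gamma_i}\) ranges over all of \(\range(H_{\Gamma_i})\). The minimum of \(\|\bm b+\bm w\|_2^2\) over \(\bm w\in\range(H_{\Gamma_i})\) is attained at \(\bm w=-\Pi_{\Gamma_i}\bm b\), by the orthogonal decomposition \(\bm b=\Pi_{\Gamma_i}\bm b+(I-\Pi_{\Gamma_i})\bm b\) and Pythagoras. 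The minimal value is therefore \(\|(I-\Pi_{\Gamma_i})\bm b\|_2^2\). Since \(I-\Pi_{\Gamma_i}\) is symmetric and idempotent, this equals \(\bm b^{\top}(I-\Pi_{\Gamma_i})\bm b=\bm z_{\omega_i}^{\top}\widetilde S_i\bm z_{\omega_i}\). The minimum exists and is attained, for example at \(\bm z_{\Gamma_i}=-H_{\Gamma_i}^{\dagger}\bm b\).

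I expect the main obstacle to be the pseudoinverse step in the rank-deficient case. Everything else is routine block algebra and least-squares geometry.
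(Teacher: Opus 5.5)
Your proposal is correct and follows essentially the same route as the paper: substitute the blocks of $\widetilde A_{\Omega_i}=H_{\Omega_i}^{\top}H_{\Omega_i}$ into the Schur complement, identify $H_{\Gamma_i}(H_{\Gamma_i}^{\top}H_{\Gamma_i})^{\dagger}H_{\Gamma_i}^{\top}$ with $\Pi_{\Gamma_i}$, and obtain the minimum by orthogonal projection onto $\range(H_{\Gamma_i})$. Your SVD argument for that pseudoinverse identity in the rank-deficient case fills in a step the paper only asserts.
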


\begin{proof}
Since \(\widetilde A_{\Omega_i}=H_{\Omega_i}^{\top}H_{\Omega_i}\), its
\((\omega_i,\Gamma_i)\)-block structure gives
\[
  \widetilde S_i
  =
  H_{\omega_i}^{\top}
  \Bigl(
    I-
    H_{\Gamma_i}(H_{\Gamma_i}^{\top}H_{\Gamma_i})^\dagger
    H_{\Gamma_i}^{\top}
  \Bigr)
  H_{\omega_i}.
\]
The matrix in parentheses is \(I-\Pi_{\Gamma_i}\), so the first claim follows.
For fixed \(\bm z_{\omega_i}\), set
\(\bm a:=H_{\omega_i}\bm z_{\omega_i}\).  As
\(\bm z_{\Gamma_i}\) varies, the vector
\(H_{\Gamma_i}\bm z_{\Gamma_i}\) ranges over \(\range(H_{\Gamma_i})\).  Hence
\[
  \min_{\bm z_{\Gamma_i}}
  \|\bm a+H_{\Gamma_i}\bm z_{\Gamma_i}\|_2^2
  =
  \min_{\bm y\in\range(H_{\Gamma_i})}
  \|\bm a+\bm y\|_2^2 .
\]
The minimizer is obtained by taking
\(\bm y=-\Pi_{\Gamma_i}\bm a\), where \(\Pi_{\Gamma_i}\) is the Euclidean
orthogonal projector onto \(\range(H_{\Gamma_i})\).  Therefore the minimum is
\(
  \|(I-\Pi_{\Gamma_i})\bm a\|_2^2
  =
  \bm z_{\omega_i}^{\top}
  H_{\omega_i}^{\top}(I-\Pi_{\Gamma_i})H_{\omega_i}
  \bm z_{\omega_i}
  =
  \|\bm z_{\omega_i}\|_{\widetilde S_i}^2 .
\)
\end{proof}

\begin{theorem}[$\tau_{\rm cut}$-controlled $M_{\omega {\rm J}}$-WAP]
\label{thm:jacobi_WAP}
Let $\tau_{\max}$ be as in \eqref{eq:tau_i_def}. 
Then
\begin{equation}
\label{eq:remainder_bound_tau}
\| (I-\Pi_{M_{\omega {\rm J}}})\bm v \|_{M_{\omega {\rm J}}}^2
\le
\tau_{\max}\,\|\bm v\|_A^2
\qquad
\forall \bm v\in\mathbb{R}^n.
\end{equation}
That is, the minimal WAP constant in the block Jacobi norm satisfies \({\cal W}_{M_{\omega{\rm J}}} \leq \tau_{\max}\).
\end{theorem}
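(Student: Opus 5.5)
We chain the three ingredients already in place: the localization identity \eqref{eq:Q0_local_remainder}, the local complement inequality of \cref{lem:local_complement_ineq}, and the variational characterization of \(\widetilde S_i\) from \cref{lem:schur_projected_ls}, which we then sum against the splitting \eqref{eq:SPSD_splitting}.

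First, by \eqref{eq:Mjac_def} and \eqref{eq:Q0_local_remainder},
\[
\|(I-\Pi_{M_{\omega{\rm J}}})\bm v\|_{M_{\omega{\rm J}}}^2
=\sum_{i=1}^{n_{\rm agg}}\|(I-\Pi_i)R_{\omega_i}\bm v\|_{A_i}^2
\le\sum_{i=1}^{n_{\rm agg}}\tau_i\,\|(I-\Pi_i)R_{\omega_i}\bm v\|_{\widetilde S_i}^2 ,
\]
where the inequality is \cref{lem:local_complement_ineq} applied with \(\bm v_i=R_{\omega_i}\bm v\).

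Next we need \(\|(I-\Pi_i)\bm v_i\|_{\widetilde S_i}\le\|\bm v_i\|_{\widetilde S_i}\). In the eigenbasis of \cref{def:coarse_space}, write \(\bm v_i=\bm\psi+\sum_{k\in\mathcal J_i^{\rm fin}}\alpha_k\bm\varphi_{i,k}\) with \(\bm\psi\in\ker(\widetilde S_i)\). The proof of \cref{lem:local_complement_ineq} shows that \((I-\Pi_i)\bm v_i=\sum_{k\in\mathcal J_i^{\rm disc}}\alpha_k\bm\varphi_{i,k}\). By \eqref{eq:finite_gep_orthonormality}, and since \(\bm\psi\) contributes nothing to the \(\widetilde S_i\)-seminorm,
\[
\|\bm v_i\|_{\widetilde S_i}^2=\sum_{k\in\mathcal J_i^{\rm fin}}\alpha_k^2\ \ge\ \sum_{k\in\mathcal J_i^{\rm disc}}\alpha_k^2=\|(I-\Pi_i)\bm v_i\|_{\widetilde S_i}^2 .
\]
This is the one step requiring care: we need both \(\widetilde S_i\)-orthonormality of the finite modes and that the coefficients of \((I-\Pi_i)\bm v_i\) coincide with those of \(\bm v_i\) on the discarded modes, which follows from the \(A_i\)-orthogonality established in that proof. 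Bounding \(\tau_i\le\tau_{\max}\) then gives
\[
\|(I-\Pi_{M_{\omega{\rm J}}})\bm v\|_{M_{\omega{\rm J}}}^2\le\tau_{\max}\sum_{i=1}^{n_{\rm agg}}\|R_{\omega_i}\bm v\|_{\widetilde S_i}^2 .
\]

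Finally, apply \cref{lem:schur_projected_ls}, choosing the feasible candidate \(\bm z_{\Gamma_i}=R_{\Gamma_i}\bm v\). This gives
\[
\|R_{\omega_i}\bm v\|_{\widetilde S_i}^2\le\|H_{\Omega_i}R_{\Omega_i}\bm v\|_2^2=\|R_{\Omega_i}\bm v\|_{\widetilde A_{\Omega_i}}^2 .
\]
Summing over \(i\) and using \eqref{eq:SPSD_splitting},
\[
\sum_{i=1}^{n_{\rm agg}}\|R_{\Omega_i}\bm v\|_{\widetilde A_{\Omega_i}}^2=\bm v^\top A\bm v=\|\bm v\|_A^2 ,
\]
which yields \eqref{eq:remainder_bound_tau}. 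The statement \(\mathcal W_{M_{\omega{\rm J}}}\le\tau_{\max}\) then follows from \cref{def:wap_calM}, because \(\Pi_{M_{\omega{\rm J}}}\) is the \(M_{\omega{\rm J}}\)-orthogonal projector onto \(\range(P)\) by \cref{lem:Q0_basic}.
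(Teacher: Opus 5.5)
Your proposal is correct and follows the paper's proof essentially step for step. The chain is the same: localize via \eqref{eq:Q0_local_remainder}, apply \cref{lem:local_complement_ineq}, use the generalized eigenbasis expansion to get $\|(I-\Pi_i)R_{\omega_i}\bm v\|_{\widetilde S_i}\le\|R_{\omega_i}\bm v\|_{\widetilde S_i}$, then invoke \cref{lem:schur_projected_ls} with the feasible choice $\bm z_{\Gamma_i}=R_{\Gamma_i}\bm v$, and finally sum against the splitting \eqref{eq:SPSD_splitting}.
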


\begin{proof}
Fix $i$ and define the local discarded vector
\(
\bm d_i:=R_{\omega_i}(I-\Pi_{M_{\omega {\rm J}}})\bm v=(I-\Pi_i)R_{\omega_i}\bm v,
\)
where the second identity follows from \eqref{eq:Q0_local_remainder}.
Because $D_{\Omega_i}$ is equal to $1$ on $\omega_i$ and $0$ on $\Gamma_i$, one has
\[
\bigl\|R_{\Omega_i}(I-\Pi_{M_{\omega {\rm J}}})\bm v\bigr\|_{D_{\Omega_i}A_{\Omega_i}D_{\Omega_i}}^2
=
\bigl\|R_{\omega_i}(I-\Pi_{M_{\omega {\rm J}}})\bm v\bigr\|_{A_i}^2
\equiv
\|\bm d_i\|_{A_i}^2.
\]
Applying \cref{lem:local_complement_ineq} gives
\(
\|\bm d_i\|_{A_i}^2
\le
\tau_{\max}\,\|\bm d_i\|_{\widetilde S_i}^2.
\)
Next, expand $R_{\omega_i}\bm v$ in the generalized eigenbasis from the proof of \cref{lem:local_complement_ineq} as
\[
R_{\omega_i}\bm v
=
\sum_{k\in\mathcal J_i^{\rm disc}} \alpha_k\bm \varphi_{i,k}
+
\sum_{k\in\mathcal J_i^{\rm keep}} \beta_k\bm \varphi_{i,k}
+
\sum_{\ell=1}^{\dim(\ker(\widetilde S_i))} \gamma_\ell\bm \psi_{i,\ell},
\]
for some coefficients $\{ \alpha_k \}$ that correspond to the discarded finite eigenvectors, while the coefficients $\{\beta_k \}, \{\gamma_{\ell}\}$ correspond to vectors in $\range(Z_i)$.
Since $\Pi_i$ is the $A_i$-orthogonal projector onto the retained space $\range(Z_i)$, it follows that 
\(
\bm d_i =(I-\Pi_i)R_{\omega_i}\bm v
=
\sum_{k\in\mathcal J_i^{\rm disc}} \alpha_k\bm \varphi_{i,k}.
\)
Using \eqref{eq:finite_gep_orthonormality}, we therefore obtain
\[
\|\bm d_i\|_{\widetilde S_i}^2
=
\sum_{k\in\mathcal J_i^{\rm disc}} \alpha_k^2
\le
\sum_{k\in\mathcal J_i^{\rm disc}} \alpha_k^2 + \sum_{k\in\mathcal J_i^{\rm keep}} \beta_k^2
=
\|R_{\omega_i}\bm v\|_{\widetilde S_i}^2.
\]
The last equality follows here because each $\bm \psi_{i,\ell}\in\ker(\widetilde S_i)$ satisfies $\widetilde S_i\bm \psi_{i,\ell}=0$, such that the kernel component contributes neither to the $\widetilde S_i$-seminorm nor to any $\widetilde S_i$-cross terms; using in addition the $\widetilde S_i$-orthonormality of the finite generalized eigenvectors, it follows that
\(
\|R_{\omega_i}\bm v\|_{\widetilde S_i}^2
=
\sum_{k\in\mathcal J_i^{\rm disc}}\alpha_k^2+\sum_{k\in\mathcal J_i^{\rm keep}}\beta_k^2.
\)

Next we apply \cref{lem:schur_projected_ls} with the vector $\bm z_{\omega_i}:=R_{\omega_i}\bm v$ to get
\[
\|R_{\omega_i}\bm v\|_{\widetilde S_i}^2
=
\min_{\bm w_{\Gamma_i}\in\mathbb{R}^{|\Gamma_i|}}
\bigl\|H_{\omega_i}R_{\omega_i}\bm v + H_{\Gamma_i}\bm w_{\Gamma_i}\bigr\|_2^2
\le
\bigl\|H_{\omega_i}R_{\omega_i}\bm v + H_{\Gamma_i}R_{\Gamma_i}\bm v\bigr\|_2^2,
\]
with the last inequality following by taking
\(\bm w_{\Gamma_i}=R_{\Gamma_i}\bm v\) as an admissible, generally nonoptimal,
choice in the minimization.
Now, we have $R_{\Omega_i} \bm{v} = [ R_{\omega_i} \bm{v}, \, R_{\Gamma_i} \bm{v} ]$; further, recall the weighted Gram construction of the local SPSD matrix $\widetilde A_{\Omega_i}=H_{\Omega_i}^{\top}H_{\Omega_i}$ with $H_{\Omega_i}=[H_{\omega_i}\ \ H_{\Gamma_i}]$. As such, the right-hand side above is exactly
\(
\bigl\|H_{\Omega_i}R_{\Omega_i}\bm v\bigr\|_2^2
=
\|R_{\Omega_i}\bm v\|_{\widetilde A_{\Omega_i}}^2.
\)
Combining the above results we obtain
\[
\bigl\|R_{\Omega_i}(I-\Pi_{M_{\omega {\rm J}}})\bm v\bigr\|_{D_{\Omega_i}A_{\Omega_i}D_{\Omega_i}}^2
=
\|\bm d_i\|_{A_i}^2
\le
\tau_i\,\|\bm d_i\|_{\widetilde S_i}^2
\le
\tau_i\,\|R_{\omega_i}\bm v\|_{\widetilde S_i}^2
\le
\tau_i\,\|R_{\Omega_i}\bm v\|_{\widetilde A_{\Omega_i}}^2.
\]
Summing over $i$ and using the definition of the $M_{\omega {\rm J}}$-norm from \eqref{eq:Mjac_def} yields
\begin{align*}
\| (I-\Pi_{M_{\omega {\rm J}}})\bm v \|_{M_{\omega {\rm J}}}^2
=
\sum_{i = 1}^{n_{\rm agg}}
\bigl\|R_{\Omega_i}(I-\Pi_{M_{\omega {\rm J}}})\bm v\bigr\|_{D_{\Omega_i}A_{\Omega_i}D_{\Omega_i}}^2
\leq
\tau_{\rm max}\,
\sum_{i = 1}^{n_{\rm agg}} \|R_{\Omega_i}\bm v\|_{\widetilde A_{\Omega_i}}^2
\\
=
\tau_{\rm max}\,
\sum_{i = 1}^{n_{\rm agg}} \bm{v}^\top R_{\Omega_i}^{\top}\widetilde A_{\Omega_i}R_{\Omega_i} \bm{v}
=
\tau_{\rm max}\,\bm{v}^\top \Bigl( \sum_{i = 1}^{n_{\rm agg}}  R_{\Omega_i}^{\top}\widetilde A_{\Omega_i}R_{\Omega_i} \Bigr) \bm{v}.
\end{align*}
Applying the SPSD splitting identity 
\(
A=\sum_{i=1}^{n_{\rm agg}} R_{\Omega_i}^{\top}\widetilde A_{\Omega_i}R_{\Omega_i}
\)
from \eqref{eq:SPSD_splitting} to the last expression proves \eqref{eq:remainder_bound_tau}.
\end{proof}

\subsection{General two-level convergence}
\label{subsec:two_level_theorem}

We now combine the WAP from \cref{sec:jac_wap} with the metric-transfer results from \cref{sec:conv-roadmap}.

\begin{theorem}[Error propagator bound]
\label{thm:two_level_general}
Let \(P\) be as in \cref{def:coarse_space}, $\tau_{\max}$ as in \eqref{eq:tau_i_def}, and assume 
\(
\rho_M:=\|I-M^{-1}A\|_A < 1.
\)
Then the error propagator \eqref{eq:ETG} satisfies
\begin{align*}
K_{\mathrm{TG}}
\le 
\lambda_{\max}(M^{-1}_{\omega{\rm J}} \widetilde M)\,\tau_{\max}
\quad
\Longrightarrow
\quad
\|E_{\mathrm{TG}}\|_A
\le
1-\frac{1}{\lambda_{\max}(M^{-1}_{\omega{\rm J}} \widetilde M)}
\frac{1}{\tau_{\max}}.
\end{align*}
\end{theorem}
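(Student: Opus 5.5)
The bound follows by chaining \cref{lem:KTG_direct_from_WAP_MJac} with \cref{thm:jacobi_WAP}. Both ingredients are already in place, so the proof will be a short assembly.

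First, under the hypothesis \(\rho_M<1\), \cref{lem:KTG_direct_from_WAP_MJac} applies. Its proof already checks that \(\widetilde M\) and \(M_{\omega{\rm J}}\) are SPD, so the transfer lemma is legitimate here. It gives
\(K_{\mathrm{TG}}=\mathcal W_{\widetilde M}\le \lambda_{\max}(M_{\omega{\rm J}}^{-1}\widetilde M)\,\mathcal W_{M_{\omega{\rm J}}}\).

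Second, \cref{thm:jacobi_WAP} gives \(\mathcal W_{M_{\omega{\rm J}}}\le\tau_{\max}\). This uses that \(\Pi_{M_{\omega{\rm J}}}\) from \cref{lem:Q0_basic} is exactly the \(M_{\omega{\rm J}}\)-orthogonal projector onto \(\range(P)\), so the left side of \eqref{eq:remainder_bound_tau} is the minimum in \cref{def:wap_calM}. Since \(\lambda_{\max}(M_{\omega{\rm J}}^{-1}\widetilde M)>0\), substituting yields \(K_{\mathrm{TG}}\le\lambda_{\max}(M_{\omega{\rm J}}^{-1}\widetilde M)\,\tau_{\max}\).

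Finally, \cref{thm:falgout_tg} gives \(\|E_{\mathrm{TG}}\|_A=1-1/K_{\mathrm{TG}}\). The map \(t\mapsto 1-1/t\) is increasing for \(t>0\), so the upper bound on \(K_{\mathrm{TG}}\) turns into the stated bound on \(\|E_{\mathrm{TG}}\|_A\).

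The only points needing care are bookkeeping rather than real obstacles. The first is that \(P\) must have full column rank, as assumed in \cref{sec:two_level_background}. This holds because the columns of each \(Z_i\) form part of a basis of \(\mathbb R^{|\omega_i|}\), and the \(R_{\omega_i}^\top\) have disjoint supports. The second is positivity of \(\tau_{\max}\), which holds since \(\tau_{\max}\ge1\).
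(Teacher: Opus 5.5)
Your proposal is correct and follows the paper's proof: you obtain $\mathcal W_{M_{\omega{\rm J}}}\le\tau_{\max}$ from \cref{thm:jacobi_WAP}, insert it into \cref{lem:KTG_direct_from_WAP_MJac}, and convert to the contraction bound via $\|E_{\mathrm{TG}}\|_A=1-1/K_{\mathrm{TG}}$ from \cref{thm:falgout_tg}. Your extra bookkeeping (full column rank of $P$, positivity of $\tau_{\max}$, monotonicity of $t\mapsto 1-1/t$) is accurate and simply makes explicit what the paper leaves implicit.
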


\begin{proof}
By \cref{thm:jacobi_WAP}, the coarse space satisfies the \(M_{\omega{\rm J}}\)-approximation property with constant \(\tau_{\max}\), i.e. \(\mathcal W_{M_{\omega{\rm J}}}\le \tau_{\max}\). Therefore applying \cref{lem:KTG_direct_from_WAP_MJac} gives the first inequality for $K_{\mathrm{TG}}$.
The associated contraction estimate for \(E_{\mathrm{TG}}\) then follows from the identity \(\|E_{\mathrm{TG}}\|_A=1-\frac{1}{K_{\mathrm{TG}}}\) in \cref{thm:falgout_tg}.
\end{proof}

It is interesting to note that the multigrid literature often considers convergence of the standalone fixed-point iteration, and hence considers the norm of the associated error propagator. 
On the other hand, convergence-style bounds in DD literature are often presented with respect to the condition number of the preconditioned operator. These two viewpoints are not equivalent, but they can be linked in one direction \cite{xu1992subspace}, as in the following exposition; see, e.g., \cite{xu2022convergence} for related $K_{\rm TG}$-to-condition-number treatments.

\begin{corollary}[Condition-number bound]
\label{cor:two_lvl_cond}
Write the two-level error propagator as
\(
E_{\mathrm{TG}}=I-B_{\mathrm{TG}}^{-1} A,
\)
such that \(B_{\mathrm{TG}}^{-1}\) is the two-level preconditioner. 
Then the condition number of the preconditioned operator satisfies
\begin{equation}
\label{eq:condition_number_bound_chi_tau}
\kappa(B_{\mathrm{TG}}^{-1} A)
\le
K_{\rm TG}
\le
\lambda_{\max}(M^{-1}_{\omega{\rm J}} \widetilde M)\,\tau_{\max}
\end{equation}
\end{corollary}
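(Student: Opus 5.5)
The plan is to identify the two-level preconditioner explicitly and show that its spectrum relative to \(A\) lies in \([1, K_{\rm TG}]\). The second inequality in \eqref{eq:condition_number_bound_chi_tau} is then exactly \cref{thm:two_level_general}, so only \(\kappa(B_{\mathrm{TG}}^{-1}A)\le K_{\rm TG}\) needs proof.

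First, I would show that \(E_{\mathrm{TG}}\) in \eqref{eq:ETG} is \(A\)-self-adjoint. Since \(I-\Pi_A\) is \(A\)-self-adjoint, and the \(A\)-adjoint of \(I-M^{-1}A\) is \(I-M^{-\top}A\), the symmetric product \((I-M^{-\top}A)(I-\Pi_A)(I-M^{-1}A)\) is \(A\)-self-adjoint. Moreover, \(I-\Pi_A\) is an \(A\)-orthogonal projector, hence \(A\)-positive semidefinite. It follows that \(E_{\mathrm{TG}} = X^{*}(I-\Pi_A)X\) is \(A\)-positive semidefinite, where \(X^{*}\) denotes the \(A\)-adjoint of \(X = I-M^{-1}A\).

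Second, the eigenvalues of \(E_{\mathrm{TG}}\) are therefore real and lie in \([0, \|E_{\mathrm{TG}}\|_A]\). Since \(\|E_{\mathrm{TG}}\|_A = 1 - 1/K_{\rm TG} < 1\) by \cref{thm:falgout_tg}, we have \(E_{\mathrm{TG}} \ne I\) and \(B_{\mathrm{TG}}^{-1}A = I - E_{\mathrm{TG}}\) is invertible. Its eigenvalues lie in \([1/K_{\rm TG}, 1]\), and \(B_{\mathrm{TG}}^{-1}A\) is \(A\)-self-adjoint. Equivalently, \(B_{\mathrm{TG}}^{-1}\) is symmetric, because \(A B_{\mathrm{TG}}^{-1} A = A(I-E_{\mathrm{TG}})\) is symmetric, and it is positive definite. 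Hence the condition number, taken as the ratio of extreme eigenvalues (the \(A\)-norm condition number), satisfies
\[
\kappa(B_{\mathrm{TG}}^{-1}A) = \frac{\lambda_{\max}}{\lambda_{\min}} \le \frac{1}{1/K_{\rm TG}} = K_{\rm TG}.
\]
The bound \(\lambda_{\max}\le 1\) comes from the positive semidefiniteness of \(E_{\mathrm{TG}}\); this is the \(\lambda_{\max}\le1\) side of the link in \cite{xu1992subspace}.

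Third, chaining this with \(K_{\rm TG}\le\lambda_{\max}(M_{\omega{\rm J}}^{-1}\widetilde M)\,\tau_{\max}\) from \cref{thm:two_level_general} gives \eqref{eq:condition_number_bound_chi_tau}.

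The main obstacle is the semidefiniteness of \(E_{\mathrm{TG}}\), which gives the upper eigenvalue bound of 1. This relies on the symmetric V(1,1) structure, where the post-smoother is the adjoint of the pre-smoother. If the cycle were not symmetric, the argument would fail and only a norm bound would be available. I would therefore write out the adjoint computation carefully.
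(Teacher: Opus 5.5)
Your proposal is correct and follows the paper's proof essentially step for step. The paper also writes \(E_{\mathrm{TG}}=T^{*A}(I-\Pi_A)T\) with \(T=I-M^{-1}A\) to get \(A\)-self-adjointness and \(A\)-semidefiniteness. It then places the spectrum of \(B_{\mathrm{TG}}^{-1}A\) in \([1-\|E_{\mathrm{TG}}\|_A,1]=[1/K_{\rm TG},1]\) and chains with \cref{thm:two_level_general}. The only cosmetic difference is that you take the eigenvalue ratio directly, whereas the paper invokes \cite[Lemma~2.1]{Xu1992SubspaceCorrection}.
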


\begin{proof}
Let \(T:=I-M^{-1}A\). Denote \(X^{*A}\) as the \(A\)-adjoint of an operator \(X\), which is defined by
\((X\bm u,\bm v)_A=(\bm u,X^{*A}\bm v)_A\) for all \(\bm u,\bm v\). 
This yields
\(X^{*A}=A^{-1}X^\top A\).  Computing the \(A\)-adjoint of \(T\) using \(T^\top=I-AM^{-\top}\), we obtain \(T^{*A}=I-M^{-\top}A\).  Therefore the two-level error propagator \eqref{eq:ETG} can be written as
\(
  E_{\mathrm{TG}}
  =
  T^{*A}(I-\Pi_A)T .
\)
Since \(\Pi_A\) is the \(A\)-orthogonal projector onto \(\range(P)\), the
operator \(I-\Pi_A\) is also an \(A\)-orthogonal projector.  Hence, for all
\(\bm v\),
\[
  (E_{\mathrm{TG}}\bm v,\bm v)_A
  =
  ((I-\Pi_A)T\bm v,T\bm v)_A
  =
  \|(I-\Pi_A)T\bm v\|_A^2
  \ge 0 .
\]
Thus \(E_{\mathrm{TG}}\) is \(A\)-self-adjoint and \(A\)-positive semi-definite.

It follows that $B_{\mathrm{TG}}^{-1}A$ is also $A$-self-adjoint, in which case we can take an eigenvalue decomposition in the $A$-inner product, $B_{\mathrm{TG}}^{-1}A\bm w_\ell = \lambda_\ell\bm w_\ell$, for $A$-orthogonal eigenvectors $\{\bm w_\ell\}$. Then any $\bm v$ can be written in the $\{\bm w_\ell\}$ basis, and 
\[
    ( \bm v,\bm v )_A \,
    \min_\ell \lambda_\ell 
    \leq 
    ( B_{\mathrm{TG}}^{-1}A\bm v, \bm v )_A 
    \leq 
    ( \bm v,\bm v )_A \, \max_\ell \lambda_\ell .
\]
Note that $E_{\mathrm{TG}}$ is diagonalized under the same basis with eigenvalues $\{1 - \lambda_\ell\}$. Let $\|E_{\mathrm{TG}}\|_A = q < 1$ and recall $E_{\mathrm{TG}}$ is $A$-SPSD. Then $0 \leq 1-\lambda_\ell \leq q$ for all $\ell$, implying $\lambda_\ell \in [1-q,1]$ for all $\ell$. We then have
\(
(1-q)(A\bm v,\bm v)
\le
(AB_{\mathrm{TG}}^{-1}A\bm v,\bm v)
\le
(A\bm v,\bm v)
\)
for all \(\bm v\).  By \cite[Lemma~2.1]{Xu1992SubspaceCorrection},
\(
\kappa(B_{\mathrm{TG}}^{-1}A)\le (1-q)^{-1}.
\)
Applying $\| E_{\rm TG} \| = 1 - 1/K_{\rm TG}$ and then the bound from \cref{thm:two_level_general} completes the proof.

\end{proof}
\subsection{The nonsymmetric smoother case}
\label{sec:nonsymmetric_smoother_case}

Much of the remaining convergence theory in this paper specializes to the case of
symmetric smoothers \(M^{-1}\).
Before this, however, we illustrate the added difficulty associated with analyzing
nonsymmetric smoothers by considering the admissibility of the \(A\)-norm
contractivity assumption required by \cref{thm:falgout_tg}.
Recall from \cref{sec:two_level_background} that for a nonsymmetric smoother with
error propagator \(I-M^{-1}A\), the required assumption is
\(
    \|I-M^{-1}A\|_A < 1,
\)
or, equivalently,
\(
    M+M^\top-A\succ0
\).

\Cref{lem:Q_ms_A_contracting} below shows this property holds for exact multiplicative Schwarz over a
generic coordinate cover
\(\{\mathcal Q_i\}_{i=1}^{n_{\mathcal Q}}\) applied to SPD $A$.
The sets \(\mathcal Q_i\) need only cover the DOFs, but they need not be disjoint.
Thus choosing the nonoverlapping aggregate set, \(\mathcal Q_i=\omega_i\), gives block
Gauss--Seidel, while choosing
\(\mathcal Q_i=\Omega_i\) gives the overlapping multiplicative Schwarz smoother.

\begin{lemma}
\label{lem:Q_ms_A_contracting}
Let \(A=A^\top\succ0\), and let
\(\{\mathcal Q_i\}_{i=1}^{n_{\mathcal Q}}\) be nonempty subsets of
\(\{1,\ldots,n\}\) whose union is \(\{1,\ldots,n\}\).
For \(A_{\mathcal Q_i}:=R_{\mathcal Q_i}AR_{\mathcal Q_i}^{\top}\), define
\[
    P_i
    :=
    R_{\mathcal Q_i}^{\top}
    A_{\mathcal Q_i}^{-1}
    R_{\mathcal Q_i}A .
\]
Let \(M_{\mathcal Q{\rm MS}}^{-1}\) denote the preconditioner associated with one
forward exact multiplicative Schwarz sweep over
\(\{\mathcal Q_i\}_{i=1}^{n_{\mathcal Q}}\), i.e.,
\(
    I-M_{\mathcal Q{\rm MS}}^{-1}A
    =
    (I-P_{n_{\mathcal Q}})\cdots(I-P_1).
\)
Then
\[
    \bigl\| I-M_{\mathcal Q{\rm MS}}^{-1}A \bigr\|_A < 1 .
\]
\end{lemma}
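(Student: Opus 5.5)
Each factor \(I-P_i\) is the \(A\)-orthogonal projector onto the \(A\)-orthogonal complement of \(V_i:=\range(R_{\mathcal Q_i}^{\top})\). The plan is to show the product \(E:=(I-P_{n_{\mathcal Q}})\cdots(I-P_1)\) is an \(A\)-contraction in finite dimensions, i.e. \(\|E\bm v\|_A<\|\bm v\|_A\) for every \(\bm v\neq 0\). Compactness of the \(A\)-unit sphere and continuity of \(\bm v\mapsto\|E\bm v\|_A\) then give \(\|E\|_A<1\), because the maximum is attained.

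\textbf{Step 1: each factor is an \(A\)-orthogonal projector.} First check \(P_i^2=P_i\), using \(R_{\mathcal Q_i}AR_{\mathcal Q_i}^{\top}=A_{\mathcal Q_i}\). Next check \(A\)-self-adjointness, \(AP_i=AR_{\mathcal Q_i}^{\top}A_{\mathcal Q_i}^{-1}R_{\mathcal Q_i}A=P_i^{\top}A\). It follows that \(I-P_i\) is also an \(A\)-orthogonal projector, so for any \(\bm w\)
\[
\|(I-P_i)\bm w\|_A^2=\|\bm w\|_A^2-\|P_i\bm w\|_A^2 .
\]

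\textbf{Step 2: telescope.} Set \(\bm e_0:=\bm v\) and \(\bm e_i:=(I-P_i)\bm e_{i-1}\). Summing the Pythagorean identities gives
\[
\|E\bm v\|_A^2=\|\bm v\|_A^2-\sum_{i=1}^{n_{\mathcal Q}}\|P_i\bm e_{i-1}\|_A^2 .
\]
Hence \(\|E\bm v\|_A\le\|\bm v\|_A\), with equality if and only if \(P_i\bm e_{i-1}=0\) for all \(i\).

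\textbf{Step 3: rule out equality for \(\bm v\neq0\).} Suppose equality holds. Then \(\bm e_i=\bm e_{i-1}\) for every \(i\), so \(\bm e_{i-1}=\bm v\) throughout, and therefore \(P_i\bm v=0\) for all \(i\). Since \(R_{\mathcal Q_i}^{\top}\) is injective and \(A_{\mathcal Q_i}^{-1}\) is invertible, \(P_i\bm v=0\) means \(R_{\mathcal Q_i}A\bm v=0\). This says the components of \(A\bm v\) indexed by \(\mathcal Q_i\) vanish. The sets \(\mathcal Q_i\) cover \(\{1,\ldots,n\}\), so \(A\bm v=0\), and since \(A\) is SPD, \(\bm v=0\). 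Consequently \(\|E\bm v\|_A<\|\bm v\|_A\) for all \(\bm v\ne 0\).

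\textbf{Step 4: conclude.} The strict inequality on the compact unit sphere, combined with attainment of the maximum, gives \(\|E\|_A<1\). The main subtlety is precisely this step, passing from pointwise strict contraction to a strict norm bound, and it is where finite-dimensionality is used. The rest is routine algebra.
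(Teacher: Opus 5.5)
Your proof is correct, but it takes a different route from the paper. The paper does not argue directly. It notes that each \(P_i\) is the \(A\)-orthogonal projector onto \(\range(R_{\mathcal Q_i}^{\top})\), and that the covering gives \(\sum_i\range(R_{\mathcal Q_i}^{\top})=\mathbb R^n\). It then invokes the Xu--Zikatanov identity for exact successive subspace correction \cite[Theorem~4.5]{xu2017amg}, \(\|I-M_{\mathcal Q{\rm MS}}^{-1}A\|_A^2=1-1/(1+c_0)\), where the stable-decomposition constant \(c_0\) is finite because the space is finite-dimensional.

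You replace that black box with three elementary ingredients. The first is the telescoped Pythagorean identity \(\|E\bm v\|_A^2=\|\bm v\|_A^2-\sum_i\|P_i\bm e_{i-1}\|_A^2\). The second is a kernel argument: equality forces \(R_{\mathcal Q_i}A\bm v=0\) for all \(i\), hence \(A\bm v=0\) by the covering. The third is compactness of the \(A\)-unit sphere.

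Your version is self-contained and shows exactly where each hypothesis enters. The covering is what rules out a nonzero vector that is \(A\)-orthogonal to every subspace \(\range(R_{\mathcal Q_i}^{\top})\). Finite-dimensionality, through attainment of the maximum, plays the role that \(c_0<\infty\) plays in the paper. In exchange, the paper's route gives an exact expression for the contraction factor in terms of \(c_0\), which could be turned into quantitative rates via stable-decomposition estimates. Since the lemma only asserts the qualitative bound \(<1\), your elementary argument loses nothing for the statement as posed.
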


\begin{proof}
For each \(i\), \(A_{\mathcal Q_i}\) is SPD, since it is a principal
submatrix of \(A\). Moreover, \(P_i\) is the \(A\)-orthogonal projection onto
\(\range(R_{\mathcal Q_i}^{\top})\).
The covering assumption implies
\(
    \sum_{i=1}^{n_{\mathcal Q}}
    \range(R_{\mathcal Q_i}^{\top})
    =
    \mathbb R^n .
\)
Thus the decomposition assumption for exact successive subspace correction described by Xu--Zikatanov \cite{xu2017amg} 
is satisfied. Applying \cite[Theorem~4.5]{xu2017amg} gives
\(
    \bigl\|I-M_{\mathcal Q{\rm MS}}^{-1}A\bigr\|_A^2
    =
    1-\frac{1}{1+c_0},
\)
where \(c_0<\infty\) because the covering assumption gives the required
finite-dimensional subspace decomposition. Hence
\(
    \bigl\|I-M_{\mathcal Q{\rm MS}}^{-1}A\bigr\|_A<1 .
\)
\end{proof}

The projection argument above is specific to multiplicative Schwarz. We now consider undamped
Boolean-PoU RAS, which is defined by the preconditioner \cite{cai1999restricted,southworth2026lsamgdd}
\begin{align} \label{eq:RAS_def}
    M_{\Omega{\rm RAS}}^{-1}
    :=
    \sum_{i=1}^{n_{\rm agg}}
    R_{\Omega_i}^{\top}D_iA_{\Omega_i}^{-1}R_{\Omega_i},
    \quad
    A_{\Omega_i} := R_{\Omega_i} A R_{\Omega_i}^\top.
\end{align}
It turns out that \(A=A^\top\succ0\) alone does not imply the same \(A\)-norm
contractivity.  The next lemma gives a three-DOF SPD \(\mathsf M\)-matrix
counterexample.\footnote{See \cref{sec:SPDDM} for the definition of an \(\mathsf{M}\)-matrix.}
In this sense, the smoother is even weaker than undamped block Jacobi (or even pointwise Jacobi, for that matter), which is unconditionally convergent for \(\mathsf M\)-matrices (see \cref{lem:A_le_2_MomegaJ}).

\begin{lemma}
\label{lem:ras_symmetrization_counterexample}
Define the SPD \(\mathsf{M}\)-matrix
\[
    A
    :=
    \begin{bmatrix}
    1 & -3/5 & 0 \\
    -3/5 & 1 & -3/5 \\
    0 & -3/5 & 1
    \end{bmatrix}.
\]
Let the nonoverlapping aggregates be
\(\omega_1=\{1\}\), \(\omega_2=\{2\}\), and \(\omega_3=\{3\}\), with one-layer
overlaps
\(\Omega_1=\{1,2\}\), \(\Omega_2=\{1,2,3\}\), and
\(\Omega_3=\{2,3\}\). 
Then \(M_{\Omega{\rm RAS}}+M_{\Omega{\rm RAS}}^\top-A\) is not SPD.
\end{lemma}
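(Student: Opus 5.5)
The plan is to avoid inverting \(M_{\Omega{\rm RAS}}\) and to work with \(B:=M_{\Omega{\rm RAS}}^{-1}\) directly, using the congruence
\[
M+M^\top-A \;=\; M^\top\bigl(B+B^\top-B^\top A B\bigr)M ,\qquad M:=M_{\Omega{\rm RAS}} .
\]
Since \(M\) is invertible, \(M+M^\top-A\) is SPD if and only if \(B+B^\top-B^\top AB\) is SPD. It therefore suffices to exhibit one vector \(\bm x\neq 0\) with
\[
2\,\bm x^\top B\bm x-\|B\bm x\|_A^2\le 0 .
\]
(This is the same as showing \(\|(I-BA)\bm y\|_A\ge\|\bm y\|_A\) for \(\bm y=A^{-1}\bm x\).)

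\textbf{Assemble \(B\) row by row.} Because \(D_i\) keeps only the \(\omega_i\) entry, row \(i\) of \(B\) is the row of \(A_{\Omega_i}^{-1}\) corresponding to DOF \(i\), padded with zeros.

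For \(\Omega_1=\{1,2\}\), \(A_{\Omega_1}=\begin{bmatrix}1&-3/5\\-3/5&1\end{bmatrix}\) has determinant \(16/25\). The first row of its inverse is \((25/16,\,15/16)\), so row 1 of \(B\) is \((25/16,\,15/16,\,0)\). By symmetry, row 3 of \(B\) is \((0,\,15/16,\,25/16)\).

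For \(\Omega_2=\{1,2,3\}\), \(A_{\Omega_2}=A\) and \(\det A=7/25\). The middle row of \(A^{-1}\) is \((15/7,\,25/7,\,15/7)\), which is row 2 of \(B\). In particular \(B\) is visibly nonsymmetric.

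\textbf{Choose the test vector.} I would take \(\bm x=(1,0,1)^\top\), whose two nonzero entries each receive only a ``restricted'' local correction while the middle row of \(B\) amplifies them. Then
\[
B\bm x=(25/16,\;30/7,\;25/16)^\top,\qquad \bm x^\top B\bm x=25/8 .
\]
Expanding \(\|B\bm x\|_A^2=\sum_k (B\bm x)_k^2-\tfrac{6}{5}\sum_{|k-l|=1}(B\bm x)_k(B\bm x)_l\) gives approximately \(4.88+18.37-16.07\approx 7.18\). This exceeds \(2\cdot 25/8=6.25\), so
\[
\bm x^\top\bigl(B+B^\top-B^\top AB\bigr)\bm x<0,
\]
and \(M+M^\top-A\) is not SPD.

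The main obstacle is choosing \(\bm x\); the calculation itself is routine. Natural first guesses \(\bm e_1\) and \(\bm e_2\) give positive (if nearly vanishing) values, so the choice matters. In the write-up I would carry the computation of \(\|B\bm x\|_A^2\) in exact rationals to make the strict sign unambiguous.

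As a side remark, the same vector certifies \(\|I-M_{\Omega{\rm RAS}}^{-1}A\|_A>1\), since the quantity computed equals \(\|\bm y\|_A^2-\|(I-BA)\bm y\|_A^2\) with \(\bm y=A^{-1}\bm x\).
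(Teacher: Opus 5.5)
Your proof is correct, but it certifies the failure differently from the paper. The paper inverts $M_{\Omega{\rm RAS}}^{-1}$, writes out $M_{\Omega{\rm RAS}}+M_{\Omega{\rm RAS}}^\top-A$ explicitly, and computes its full spectrum, exhibiting the eigenvalue $16/7-3\sqrt{737}/35\approx-0.041$. You instead use the congruence $M+M^\top-A=M^\top(B+B^\top-B^\top AB)M$ to work only with $B=M_{\Omega{\rm RAS}}^{-1}$, which is read off row by row from the local inverses. You then exhibit one witness vector.

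Your rows of $B$ and your arithmetic check out. In exact form, $\|B\bm x\|_A^2=45025/6272$ versus $2\bm x^\top B\bm x=39200/6272$, so the form equals $-5825/6272$. Your expansion with the factor $6/5$ should be read over unordered neighbor pairs.

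Your route has three advantages. It avoids inverting the Schwarz preconditioner, which matches the paper's own remark that in practice only the action of $M^{-1}$ is available. It gives a certificate in exact rational arithmetic rather than surds. And through $\bm y=A^{-1}\bm x$ it directly produces an error vector that $I-M_{\Omega{\rm RAS}}^{-1}A$ amplifies in the $A$-norm, which is the content of \cref{cor:ras_not_automatically_A_contracting}. The paper's route instead gives complete information: exactly one negative direction, and how small it is.

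The two are consistent. Your witness corresponds to $\bm w=B\bm x=(25/16,\,30/7,\,25/16)^\top$ in the paper's matrix, with Rayleigh quotient about $-0.040$. That is nearly the extreme eigenvalue, which is why the first guesses $\bm e_1,\bm e_2$ miss.

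The one implicit step is invertibility of $B$, which the congruence needs. The statement presupposes it, and indeed $\det B=625/256\neq0$.
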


\begin{proof}
The eigenvalues of \(A\) are \(\lambda_1 = 1-3\sqrt{2}/5 = 0.151\ldots\), \(\lambda_2 = 1\), and \(\lambda_3 = 1+3\sqrt{2}/5=1.848\ldots\), so \(A\) is SPD. Since its off-diagonal entries are
nonpositive, \(A\) is an SPD \(\mathsf M\)-matrix.

For the stated overlaps and Boolean PoU matrices, direct calculation then gives
\[
    M_{\Omega{\rm RAS}}
    +
    M_{\Omega{\rm RAS}}^\top
    -
    A
    =
    \frac1{175}
    \begin{bmatrix}
    337 & -240 & 288\\
    -240 & 175 & -240\\
    288 & -240 & 337
    \end{bmatrix}.
\]
The eigenvalues of this matrix are
\[
\lambda_{1} = \frac{16}{7} - \frac{3 \sqrt{737}}{35} = -0.041\ldots,
\quad
\lambda_{2} = \frac{7}{25} = 0.28,
\quad
\lambda_{3} = \frac{16}{7} + \frac{3 \sqrt{737}}{35} 
= 
4.612\ldots
\]
Since $\lambda_1 < 0$, the matrix \(M_{\Omega{\rm RAS}}+M_{\Omega{\rm RAS}}^\top-A\) is not SPD.
\end{proof}

\begin{corollary}
\label{cor:ras_not_automatically_A_contracting}
Let \(M_{\Omega{\rm RAS}}^{-1}\) be as in \eqref{eq:RAS_def}.
Then, the condition \(A=A^\top\succ0\) does not imply
\[
    \bigl\| I-M_{\Omega{\rm RAS}}^{-1}A \bigr\|_A < 1 .
\]
Moreover, this implication does not even hold within the class of SPD \(\mathsf{M}\)-matrices $A$.
\end{corollary}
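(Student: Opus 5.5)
The corollary follows from the explicit counterexample of \cref{lem:ras_symmetrization_counterexample} together with the standard equivalence recalled in \cref{sec:two_level_background}: \(\|I-M^{-1}A\|_A<1\) holds if and only if \(M+M^\top-A\succ0\) \cite[Thm.~3.1]{vassilevski2010lecturenotes}.

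The plan is to argue by contrapositive on a single instance. Take the \(3\times3\) SPD \(\mathsf M\)-matrix \(A\) of \cref{lem:ras_symmetrization_counterexample}, with the aggregates \(\omega_i=\{i\}\) and the one-layer overlaps \(\Omega_1=\{1,2\}\), \(\Omega_2=\{1,2,3\}\), \(\Omega_3=\{2,3\}\). Before using the lemma, I will check that \(M_{\Omega{\rm RAS}}^{-1}\) from \eqref{eq:RAS_def} is invertible, so that \(M_{\Omega{\rm RAS}}\) exists. This can be read off from the lemma's computation, or argued directly: the computed matrix \(M_{\Omega{\rm RAS}}+M_{\Omega{\rm RAS}}^\top-A\) has finite entries. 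The lemma then says that \(M_{\Omega{\rm RAS}}+M_{\Omega{\rm RAS}}^\top-A\) has a negative eigenvalue, so it is not SPD. By the equivalence above, \(\|I-M_{\Omega{\rm RAS}}^{-1}A\|_A\ge1\). Since \(A\) is SPD, this single instance refutes the implication, and since \(A\) is also an \(\mathsf M\)-matrix, it refutes the implication within that class as well.

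The equivalence cited above is the only ingredient used, and only the direction "\(\|I-M^{-1}A\|_A<1\Rightarrow M+M^\top-A\succ0\)" is needed. To keep the argument self-contained, I would verify that direction directly. Write \(T=I-M^{-1}A\). Then
\[
A-T^\top AT=AM^{-\top}(M+M^\top-A)M^{-1}A .
\]
If \(\|T\|_A<1\), the left-hand side is SPD, and a congruence by the invertible matrix \(M^{-1}A\) shows that \(M+M^\top-A\succ0\).

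There is no real obstacle here. The only point that needs care is confirming that \(M_{\Omega{\rm RAS}}\) is nonsingular, which is implicit in the lemma's direct calculation.
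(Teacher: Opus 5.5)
Your proposal is correct and follows the paper's route. The paper states the corollary as an immediate consequence of \cref{lem:ras_symmetrization_counterexample} combined with the equivalence \(\|I-M^{-1}A\|_A<1 \iff M+M^\top-A\succ0\), and your identity \(A-T^\top AT = AM^{-\top}(M+M^\top-A)M^{-1}A\) correctly supplies the direction you need.

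The only soft spot is invertibility: ``the computed matrix has finite entries'' is circular, since that computation already presupposes \(M_{\Omega{\rm RAS}}\) exists. A direct check gives \(\det M_{\Omega{\rm RAS}}^{-1} = 625/256 \neq 0\); alternatively, a singular \(M^{-1}\) would give \(I-M^{-1}A\) the eigenvalue \(1\), so the norm would be at least \(1\) trivially.
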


Recall that $A$-norm contractivity of the smoother is equivalent to $M + M^\top - A \succ 0$; since $A$ is SPD by assumption, a necessary condition of this requirement is that $M + M^\top \succ 0$.
Or, equivalently, that $M^{-1} + M^{-\top} \succ 0$, because $M^{-1} + M^{-\top} = M^{-1}( M + M^\top ) M^{-\top}$ is congruent to $M + M^\top$.
Considering $M^{-1} + M^{-\top}$, we can see that the RAS obstruction is visible at the level of the local decompositions. 
For example, overlapping additive Schwarz,
\(
    M_{\Omega{\rm AS}}^{-1}
    +
    M_{\Omega{\rm AS}}^{-\top}
    =
    2 \sum_i R_{\Omega_i}^{\top}A_{\Omega_i}^{-1}R_{\Omega_i},
\)
is naturally a sum of local SPD terms.  
On the other hand, symmetrized Boolean-PoU RAS, 
\(
    M_{\Omega{\rm RAS}}^{-1} + M_{\Omega{\rm RAS}}^{-\top} 
    =
    \sum_i R_{\Omega_i}^{\top}D_iA_{\Omega_i}^{-1}R_{\Omega_i}
    +
    \sum_i R_{\Omega_i}^{\top}A_{\Omega_i}^{-1}D_iR_{\Omega_i}
    =
    \sum_i R_{\Omega_i}^{\top} (D_iA_{\Omega_i}^{-1} + A_{\Omega_i}^{-1}D_i) R_{\Omega_i}
    ,
\)
is a sum of local saddle-point (and hence, generally, indefinite) terms.
Thus, unlike overlapping additive Schwarz, RAS does not naturally provide a local positive-energy decomposition of its symmetric part. In fact, positivity of
\(M_{\Omega{\rm RAS}}^{-1}+M_{\Omega{\rm RAS}}^{-\top}\), when it holds, is induced by global assembly.
If the symmetrized smoother is not $A$-norm contractive, i.e., \(M_{\Omega{\rm RAS}}+M_{\Omega{\rm RAS}}^{\top} - A\) is not SPD, but \(M_{\Omega{\rm RAS}}^{-1}+M_{\Omega{\rm RAS}}^{-\top}\) is SPD, then one could employ damping to make it contractive. However, it is not clear how one might choose an appropriate damping factor in practice.
\subsection{The symmetric smoother case}
\label{sec:symM}

\Cref{thm:two_level_general} proves two-level convergence for general, convergent smoothers $M^{-1}$ in terms of the constant \(\lambda_{\max}(M_{\omega{\rm J}}^{-1}\widetilde M)\), which measures the cost of transferring from the block-Jacobi metric to the symmetrized smoother metric. 
In general, this constant is difficult to interpret, and even to estimate numerically, recalling the definition of $\wt{M}$ from \eqref{eq:Mtilde}, and that for most smoothers one usually only has access to (the action of) $M^{-1}$ and not $M$.
That being said, if $M^{-1}$ is symmetric, then one can develop a sharp one-sided comparison between \(\widetilde M\) and \(M\) (\cref{lem:Mtilde_from_M}), which can  be used to bound \(\lambda_{\max}(M_{\omega{\rm J}}^{-1}\widetilde M)\) (\cref{lem:transfer_split_bound}).

\begin{lemma}
\label{lem:Mtilde_from_M}
Let \(M\) be symmetric and nonsingular, and define
\(\rho_M:=\|I-M^{-1}A\|_A\).  Then
\[
  \rho_M<1
  \quad\Longleftrightarrow\quad
  0<\lambda_i(M^{-1}A)<2
  \quad\text{for all } i .
\]
In particular, if \(\rho_M<1\), then \(M\) and \(\widetilde M\) are SPD, and from \eqref{eq:NM1M2}
\[
  \mathcal N_{\widetilde M,M}
  =
  \frac{1}{2-\lambda_{\max}(M^{-1}A)}.
\]
\end{lemma}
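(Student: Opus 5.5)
The plan is to reduce everything to the spectrum of the $A$-self-adjoint operator $M^{-1}A$. Since $M$ is symmetric and nonsingular, $M^{-1}A$ is self-adjoint in the $A$-inner product:
\[
(M^{-1}A\bm u,\bm v)_A=\bm u^\top A M^{-1}A\bm v=(\bm u,M^{-1}A\bm v)_A .
\]
Hence $M^{-1}A$ has real eigenvalues $\lambda_i$ and an $A$-orthonormal eigenbasis. The error propagator $I-M^{-1}A$ is then $A$-self-adjoint with eigenvalues $1-\lambda_i$, and its $A$-norm equals its spectral radius, so $\rho_M=\max_i|1-\lambda_i|$. It follows at once that $\rho_M<1$ if and only if $0<\lambda_i<2$ for all $i$, which gives the stated equivalence.

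For the second claim, assume $\rho_M<1$. Since $M^{-1}A$ is similar to $A^{1/2}M^{-1}A^{1/2}$ and all $\lambda_i>0$, the matrix $M^{-1}$ is congruent to an SPD matrix. Hence $M^{-1}$, and therefore $M$, is SPD. With $M=M^\top$, the symmetrized smoother in \eqref{eq:Mtilde} becomes
\[
\widetilde M=M(2M-A)^{-1}M .
\]
Here $2M-A=M^{1/2}\bigl(2I-M^{-1/2}AM^{-1/2}\bigr)M^{1/2}$ is SPD, because the eigenvalues of $M^{-1/2}AM^{-1/2}$ coincide with the $\lambda_i\in(0,2)$. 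Consequently $\widetilde M$ is SPD as well.

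To compute $\mathcal N_{\widetilde M,M}$, I will use \cref{lem:calN-eig-form}, which gives $\mathcal N_{\widetilde M,M}=\lambda_{\max}(M^{-1}\widetilde M)$. We have $M^{-1}\widetilde M=(2M-A)^{-1}M$. Writing $X:=M^{-1/2}AM^{-1/2}$, this is similar to
\[
M^{1/2}(2M-A)^{-1}M^{1/2}=(2I-X)^{-1}.
\]
The eigenvalues of $(2I-X)^{-1}$ are $1/(2-\lambda_i)$. The function $t\mapsto 1/(2-t)$ is increasing on $(0,2)$, so the largest of these values is $1/(2-\lambda_{\max}(M^{-1}A))$, which is the claimed formula.

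The main care point is the similarity bookkeeping: we need $M^{-1}A$, $X$, and $(2M-A)^{-1}M$ to be related by similarity transformations, which requires $M^{1/2}$. That is why positive definiteness of $M$ must be established first, from the spectral condition. Before that step, only the $A$-self-adjointness argument is available, and that argument uses $A^{1/2}$ rather than $M^{1/2}$. Everything else is routine.
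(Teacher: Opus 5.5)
Your proposal is correct and follows essentially the same route as the paper. Both arguments reduce the contraction condition to the real spectrum of $M^{-1}A$ (you via $A$-self-adjointness, the paper via the equivalent similarity to $A^{1/2}M^{-1}A^{1/2}$), deduce $M\succ 0$ by congruence and $2M-A\succ 0$, and read off $\lambda_{\max}(M^{-1}\widetilde M)$ from $(2I-M^{-1/2}AM^{-1/2})^{-1}$, with your remark that $t\mapsto 1/(2-t)$ is increasing on $(0,2)$ making explicit a step the paper leaves implicit.
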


\begin{proof}
Since \(M\) is symmetric, \(M^{-1}A\) is similar to the symmetric matrix
\(S:=A^{1/2}M^{-1}A^{1/2}\), and therefore has real spectrum.  Moreover,
\(I-M^{-1}A\) is \(A\)-self-adjoint and is similar to \(I-S\).  Hence
\(\rho_M=\|I-S\|_2=\max_i |1-\lambda_i(S)|\).  It follows immediately that
\(\rho_M<1\) if and only if \(0<\lambda_i(S)<2\) for every \(i\). 

Assume now that \(\rho_M<1\).  Then \(S\succ0\), and hence \(M^{-1}\succ0\) by
congruence; therefore \(M\succ0\).  Also \(\lambda_{\max}(M^{-1}A)<2\), so
\(2I-M^{-1/2}AM^{-1/2}\succ0\), equivalently \(2M-A\succ0\).  Thus
\(\widetilde M=M(2M-A)^{-1}M\) is SPD.  Finally,
\[
  M^{-1/2}\widetilde M M^{-1/2}
  =
  \bigl(2I-M^{-1/2}AM^{-1/2}\bigr)^{-1}.
\]
The eigenvalues of \(M^{-1/2}AM^{-1/2}\) are those of \(M^{-1}A\), so
\(\lambda_{\max}(M^{-1}\widetilde M)
=(2-\lambda_{\max}(M^{-1}A))^{-1}\).  By definition of
\(\mathcal N_{\widetilde M,M}\) in \eqref{eq:NM1M2}, this proves the claim.
\end{proof}

\begin{lemma}%
\label{lem:transfer_split_bound}
Let $M$ be symmetric and \(\rho_M:=\|I-M^{-1}A\|_A < 1\). Then
\[
\lambda_{\max}(M_{\omega{\rm J}}^{-1}\widetilde M)
\le
\mathcal N_{\widetilde M,M}\,\mathcal N_{M,M_{\omega{\rm J}}}
=
\frac{\lambda_{\max}(M_{\omega{\rm J}}^{-1} M)}{2-\lambda_{\max}(M^{-1} A)}.
\]
\end{lemma}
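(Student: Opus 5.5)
The plan is to chain two Loewner-order comparisons through the intermediate SPD matrix \(M\), and then use \cref{lem:calN-eig-form} and \cref{lem:Mtilde_from_M} to turn each factor into an eigenvalue expression.

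First, since \(M\) is symmetric and \(\rho_M<1\), \cref{lem:Mtilde_from_M} gives that \(M\) and \(\widetilde M\) are SPD. Also, \(M_{\omega{\rm J}}\) is SPD, as noted in the proof of \cref{lem:KTG_direct_from_WAP_MJac}. So all three constants \(\mathcal N_{\widetilde M,M}\), \(\mathcal N_{M,M_{\omega{\rm J}}}\) and \(\mathcal N_{\widetilde M,M_{\omega{\rm J}}}\) are well defined through \eqref{eq:NM1M2}.

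By the definition of these constants as minimal constants in Loewner inequalities,
\[
\widetilde M\preceq \mathcal N_{\widetilde M,M}\,M
\qquad\text{and}\qquad
M\preceq \mathcal N_{M,M_{\omega{\rm J}}}\,M_{\omega{\rm J}} .
\]
Both constants are positive, so the second inequality can be scaled by \(\mathcal N_{\widetilde M,M}\) and combined with the first by transitivity:
\[
\widetilde M\preceq \mathcal N_{\widetilde M,M}\,\mathcal N_{M,M_{\omega{\rm J}}}\,M_{\omega{\rm J}} .
\]
Minimality of \(\mathcal N_{\widetilde M,M_{\omega{\rm J}}}\) then gives
\[
\mathcal N_{\widetilde M,M_{\omega{\rm J}}}\le \mathcal N_{\widetilde M,M}\,\mathcal N_{M,M_{\omega{\rm J}}} .
\]
By \cref{lem:calN-eig-form}, the left-hand side equals \(\lambda_{\max}(M_{\omega{\rm J}}^{-1}\widetilde M)\). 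Equivalently, one can see the same bound from the Rayleigh quotient factorization \(\frac{x^\top\widetilde M x}{x^\top M_{\omega{\rm J}}x}=\frac{x^\top\widetilde M x}{x^\top Mx}\cdot\frac{x^\top M x}{x^\top M_{\omega{\rm J}}x}\) and taking suprema.

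It remains to identify the two factors. By \cref{lem:calN-eig-form}, \(\mathcal N_{M,M_{\omega{\rm J}}}=\lambda_{\max}(M_{\omega{\rm J}}^{-1}M)\). By \cref{lem:Mtilde_from_M}, \(\mathcal N_{\widetilde M,M}=(2-\lambda_{\max}(M^{-1}A))^{-1}\). Multiplying these gives the stated equality.

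The only point needing care is ensuring \(M\) is SPD, so that \cref{lem:calN-eig-form} applies with \(\mathcal M_2=M\). This is exactly what \cref{lem:Mtilde_from_M} provides under \(\rho_M<1\), so I do not expect any step to be a real obstacle.
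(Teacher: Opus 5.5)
Your proposal is correct and follows the same route as the paper: the submultiplicativity $\mathcal N_{\widetilde M,M_{\omega{\rm J}}}\le \mathcal N_{\widetilde M,M}\,\mathcal N_{M,M_{\omega{\rm J}}}$, followed by the eigenvalue identifications from \cref{lem:calN-eig-form} and \cref{lem:Mtilde_from_M}. You additionally spell out the Loewner transitivity step and the fact that $M$ is SPD, both of which the paper covers only with ``by definition of the comparison constant.''
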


\begin{proof}
By \cref{lem:calN-eig-form},
\(
\lambda_{\max}(M_{\omega{\rm J}}^{-1}\widetilde M)
=
\mathcal N_{\widetilde M,M_{\omega{\rm J}}}.
\)
Also, by definition of the comparison constant,
\(
\mathcal N_{\widetilde M,M_{\omega{\rm J}}}
\le
\mathcal N_{\widetilde M,M}\,\mathcal N_{M,M_{\omega{\rm J}}}.
\)
Now use the value of \(\mathcal N_{\widetilde M,M}\) from \cref{lem:Mtilde_from_M}, and replace
\(
\mathcal N_{M,M_{\omega{\rm J}}}
\)
with its maximum eigenvalue characterization. 
\end{proof}

We can now specialize \cref{thm:two_level_general} and \cref{cor:two_lvl_cond} for symmetric $M$.

\begin{corollary}%
\label{cor:two_lvl_cond_sym}
Assume the hypotheses of \cref{thm:two_level_general} and \cref{cor:two_lvl_cond}.
If, in addition, $M$ is symmetric then
\begin{align*}
K_{\mathrm{TG}}
\le 
\frac{\lambda_{\max}({M^{-1}_{\omega{\rm J}}} M)}{
2 - \lambda_{\max}(M^{-1} A)
} \, \tau_{\max}.
\end{align*}
Consequently,
\begin{align*}
\kappa(B_{\mathrm{TG}}^{-1} A)
\le
\frac{\lambda_{\max}({M^{-1}_{\omega{\rm J}}} M)}{
2 - \lambda_{\max}(M^{-1} A)
} \, \tau_{\max},
\quad
\|E_{\mathrm{TG}}\|_A
\le
1-\frac{2 - \lambda_{\max}(M^{-1} A)}{\lambda_{\max}({ M^{-1}_{\omega{\rm J}}} M)}
\frac{1}{\tau_{\max}},
\end{align*}
\end{corollary}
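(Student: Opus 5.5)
This corollary is a direct chaining of bounds already established, so the plan is simply to compose them in order. First, I will start from the $K_{\mathrm{TG}}$ estimate of \cref{thm:two_level_general},
\[
K_{\mathrm{TG}} \le \lambda_{\max}(M_{\omega{\rm J}}^{-1}\widetilde M)\,\tau_{\max},
\]
which holds under the standing assumption $\rho_M<1$. Since $M$ is now symmetric, \cref{lem:Mtilde_from_M} applies: it gives that $M$ and $\widetilde M$ are SPD and $\lambda_{\max}(M^{-1}A)<2$, so the denominator $2-\lambda_{\max}(M^{-1}A)$ is strictly positive. I will then invoke \cref{lem:transfer_split_bound} to replace $\lambda_{\max}(M_{\omega{\rm J}}^{-1}\widetilde M)$ by $\lambda_{\max}(M_{\omega{\rm J}}^{-1}M)/(2-\lambda_{\max}(M^{-1}A))$. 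Multiplying by $\tau_{\max}>0$ yields the stated bound on $K_{\mathrm{TG}}$.

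The two consequences follow from this $K_{\mathrm{TG}}$ bound. For the condition number, \cref{cor:two_lvl_cond} gives $\kappa(B_{\mathrm{TG}}^{-1}A)\le K_{\mathrm{TG}}$, and I will substitute the new upper bound. For the error propagator, \cref{thm:falgout_tg} gives the identity $\|E_{\mathrm{TG}}\|_A = 1-1/K_{\mathrm{TG}}$. The map $K\mapsto 1-1/K$ is increasing on $K>0$, so any upper bound on $K_{\mathrm{TG}}$ transfers monotonically. Inverting the product then gives exactly
\[
1-\frac{2-\lambda_{\max}(M^{-1}A)}{\lambda_{\max}(M_{\omega{\rm J}}^{-1}M)}\,\frac{1}{\tau_{\max}}.
\]

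No step here is a real obstacle. The only point requiring care is confirming the positivity and symmetry hypotheses needed to apply \cref{lem:Mtilde_from_M} and \cref{lem:transfer_split_bound}. In particular, $M_{\omega{\rm J}}$ is SPD as noted in the proof of \cref{lem:KTG_direct_from_WAP_MJac}, and $M$ is SPD by \cref{lem:Mtilde_from_M}. Together these guarantee that $\lambda_{\max}(M_{\omega{\rm J}}^{-1}M)$ is well defined and positive, so all the quantities are finite and the monotone substitutions are legitimate.
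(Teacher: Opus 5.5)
Your proposal is correct and follows the paper's intended route. The paper gives no separate proof: the corollary is the specialization of \cref{thm:two_level_general} and \cref{cor:two_lvl_cond} obtained by inserting the bound of \cref{lem:transfer_split_bound}, which is valid because \cref{lem:Mtilde_from_M} makes $M$ and $\widetilde M$ SPD with $\lambda_{\max}(M^{-1}A)<2$. One then substitutes this bound monotonically into $\kappa(B_{\mathrm{TG}}^{-1}A)\le K_{\mathrm{TG}}$ and $\|E_{\mathrm{TG}}\|_A=1-1/K_{\mathrm{TG}}$, exactly as you do.
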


\subsection{The damped symmetric smoother case}
\label{sec:symM_damped}

Damped smoothers are often used in multigrid methods, both to ensure convergence of the smoother, and to improve overall two-level efficiency by making the smoother more efficient on the complement of the coarse space. 
In the present setting, scalar damping naturally arises for both of these reasons. 
Indeed, for
\(M_\zeta:=\zeta^{-1}M\), with \(\zeta\in\mathbb R_+\), the  $K_{\mathrm{TG}}$ upper bound in \cref{cor:two_lvl_cond_sym} separates two distinct factors,
namely the transfer cost \(\lambda_{\max}(M_{\omega{\rm J}}^{-1}M_\zeta)\) and the stability margin \(2-\lambda_{\max}(M_\zeta^{-1}A)\).
Since \(M_\zeta^{-1}A=\zeta M^{-1}A\), damping provides a direct mechanism for controlling both of these.

Applying \cref{thm:two_level_general} to a damped smoother \(M_\zeta:=\zeta^{-1}M\) leads to the bound
\(K_{\mathrm{TG}}^\zeta \le \lambda_{\max}(M_{\omega{\rm J}}^{-1}\widetilde M_\zeta)\tau_{\max}\). However, the dependence of
\(\lambda_{\max}(M_{\omega{\rm J}}^{-1}\widetilde M_\zeta)\) on \(\zeta\) is not clear, and the resulting optimization problem of choosing $\zeta$ to minimize this two-level bound, $\argmin_{\zeta} \lambda_{\max}(M_{\omega{\rm J}}^{-1}\wt{M}_\zeta)$, is not readily solved in closed form.
By contrast, the \(\zeta\)-dependence for the symmetric bound from \cref{cor:two_lvl_cond_sym} is explicit: the transfer factor scales like \(\zeta^{-1}\), while the stability margin becomes \(2-\zeta\,\lambda_{\max}(M^{-1}A)\).

\begin{corollary}[Damped symmetric smoother]
\label{cor:damped_two_level_bound}
Let \(M\) be SPD, and for \(\zeta>0\) define the damped smoother
\(M_\zeta:=\zeta^{-1}M\).
Then
\[
\rho_{M_\zeta}<1
\quad\Longleftrightarrow\quad
0<\zeta<\frac{2}{\lambda_{\max}(M^{-1}A)}.
\]
For every such \(\zeta\), the associated two-grid constant satisfies
\[
K_{\mathrm{TG}}^\zeta
\le
\frac{\lambda_{\max}(M_{\omega{\rm J}}^{-1}M)}
{\zeta\bigl(2-\zeta\,\lambda_{\max}(M^{-1}A)\bigr)}
\tau_{\max}
.
\]
Moreover, the right-hand side is minimized at
\(\zeta_*=\lambda_{\max}(M^{-1}A)^{-1}\), where it takes the value
\(
K_{\mathrm{TG}}^{\zeta_*}
\le
\lambda_{\max}(M_{\omega{\rm J}}^{-1}M)\,\lambda_{\max}(M^{-1}A)
\tau_{\max}
.
\)
\end{corollary}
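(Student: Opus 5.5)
The plan is to apply \cref{lem:Mtilde_from_M} and \cref{cor:two_lvl_cond_sym} to the symmetric matrix \(M_\zeta=\zeta^{-1}M\), and then to reduce all \(\zeta\)-dependence to elementary scalar algebra.

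First, the convergence characterization. Since \(M\) is SPD, \(M_\zeta\) is symmetric and nonsingular for \(\zeta>0\), so \cref{lem:Mtilde_from_M} gives
\[
\rho_{M_\zeta}<1 \iff 0<\lambda_i(M_\zeta^{-1}A)<2 \quad \text{for all } i .
\]
Now use \(M_\zeta^{-1}A=\zeta M^{-1}A\). Because \(M^{-1}A\) is similar to the SPD matrix \(A^{1/2}M^{-1}A^{1/2}\), its eigenvalues are all positive. The lower bound \(\zeta\lambda_i(M^{-1}A)>0\) therefore holds automatically, and the upper bound is equivalent to \(\zeta\lambda_{\max}(M^{-1}A)<2\). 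This yields the stated interval \(0<\zeta<2/\lambda_{\max}(M^{-1}A)\).

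Second, the two-grid bound. For \(\zeta\) in this interval, the hypotheses of \cref{cor:two_lvl_cond_sym} hold with \(M\) replaced by \(M_\zeta\), giving
\[
K^\zeta_{\mathrm{TG}}\le \frac{\lambda_{\max}(M_{\omega{\rm J}}^{-1}M_\zeta)}{2-\lambda_{\max}(M_\zeta^{-1}A)}\,\tau_{\max}.
\]
Here \(P\), and hence \(\tau_{\max}\), does not depend on \(\zeta\). Two scaling identities then finish this step:
\[
\lambda_{\max}(M_{\omega{\rm J}}^{-1}M_\zeta)=\zeta^{-1}\lambda_{\max}(M_{\omega{\rm J}}^{-1}M),
\qquad
\lambda_{\max}(M_\zeta^{-1}A)=\zeta\,\lambda_{\max}(M^{-1}A).
\]
Both follow from homogeneity of eigenvalues under positive scalar multiplication. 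Substituting them gives the displayed bound.

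Third, the optimization. Write \(L:=\lambda_{\max}(M^{-1}A)\) and \(f(\zeta)=\zeta(2-\zeta L)\) on \((0,2/L)\). The right-hand side is minimized exactly where \(f\) is maximized. Since \(f\) is a concave quadratic with \(f'(\zeta)=2-2\zeta L\), its maximum is at \(\zeta_*=1/L\), which lies inside the interval, with \(f(\zeta_*)=1/L\). Substituting gives
\[
K_{\mathrm{TG}}^{\zeta_*}\le \lambda_{\max}(M_{\omega{\rm J}}^{-1}M)\,L\,\tau_{\max}.
\]

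No step is a genuine obstacle. The only point requiring care is in the first step: positivity of the spectrum of \(M^{-1}A\), which follows from \(M\succ0\), is what makes the lower bound of the interval automatic.
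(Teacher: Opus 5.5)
Your proposal is correct and follows essentially the same route as the paper: you use \cref{lem:Mtilde_from_M} together with positivity of the spectrum of \(M^{-1}A\) to get the admissible interval for \(\zeta\), the two scaling identities \(\lambda_{\max}(M_{\omega{\rm J}}^{-1}M_\zeta)=\zeta^{-1}\lambda_{\max}(M_{\omega{\rm J}}^{-1}M)\) and \(\lambda_{\max}(M_\zeta^{-1}A)=\zeta\lambda_{\max}(M^{-1}A)\), and maximization of the concave quadratic \(\zeta(2-\zeta\beta)\) at \(\zeta_*=1/\beta\). The only cosmetic difference is that you invoke \cref{cor:two_lvl_cond_sym} directly for \(M_\zeta\), whereas the paper applies \cref{thm:two_level_general} followed by the split bound of \cref{lem:transfer_split_bound}, which is exactly what that corollary packages.
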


\begin{proof}
Because \(M\) is SPD, \(M^{-1}A\) is similar to the SPD matrix
\(M^{-1/2}AM^{-1/2}\), so its eigenvalues are positive.
Since \(M_\zeta^{-1}A=\zeta M^{-1}A\), we have
\(\lambda_{\max}(M_\zeta^{-1}A)=\zeta\,\lambda_{\max}(M^{-1}A)\). Thus, by
\cref{lem:Mtilde_from_M}, \(\rho_{M_\zeta}<1\) if and only if
\(\lambda_{\max}(M_\zeta^{-1}A)<2\), which is equivalent to
\(0<\zeta<2/\lambda_{\max}(M^{-1}A)\).

Also, since \(M_\zeta=\zeta^{-1}M\), we have
\(\lambda_{\max}(M_{\omega{\rm J}}^{-1}M_\zeta)
=\zeta^{-1}\lambda_{\max}(M_{\omega{\rm J}}^{-1}M)\).
Applying \cref{thm:two_level_general} with \(M_\zeta\) in place of \(M\) gives
\[
K_{\mathrm{TG}}^\zeta
\le
\lambda_{\max}(M_{\omega{\rm J}}^{-1}\widetilde M_\zeta)\,\tau_{\max}
\le
\frac{\tau_{\max}\,\zeta^{-1}\lambda_{\max}(M_{\omega{\rm J}}^{-1}M)}
{2-\zeta\,\lambda_{\max}(M^{-1}A)}
=
\frac{\tau_{\max}\,\lambda_{\max}(M_{\omega{\rm J}}^{-1}M)}
{\zeta\bigl(2-\zeta\,\lambda_{\max}(M^{-1}A)\bigr)}.
\]
Finally, writing \(\beta:=\lambda_{\max}(M^{-1}A)\), the denominator
\(\zeta(2-\zeta\beta)\) is maximized at \(\zeta=\beta^{-1}\), so the displayed explicit upper
bound is minimized there; at \(\zeta_*=\beta^{-1}\) it equals
\(\tau_{\max}\,\lambda_{\max}(M_{\omega{\rm J}}^{-1}M)\beta\).
\end{proof}

In numerical experiments, not shown here for brevity, we have tended to observe for block Jacobi smoothing that the optimal damping value predicted above \(\zeta_*=\lambda_{\max}(M^{-1}A)^{-1}\) is generally not quite optimal (often underdamping).
This is not something we explore further here, since, as numerical results in \cref{sec:numerics} demonstrate, RAS or overlapping multiplicative Schwarz smoothers tend to yield much faster convergence than block Jacobi in practice.\footnote{For the special case of block Jacobi smoothing,
\(M=M_{\omega{\rm J}}\), one can derive sharper two-level estimates than the
general bounds used here. Optimizing those specialized estimates also gives
more accurate predictions of the optimal damping parameter. We omit this
block-Jacobi-specific refinement because block Jacobi is not the main smoother
targeted by the method and, in the reported experiments, is typically less
effective than RAS and overlapping multiplicative
Schwarz.}

\section{Concrete two-level estimates for certain SPD smoothers}
\label{sec:smoothers}

The WAP from \cref{sec:two_level} controls the coarse space in the
aggregate block-Jacobi metric \(M_{\omega{\rm J}}\).  To turn this into a
two-level convergence estimate for a particular SPD smoother \(M\), one must also
estimate the cost of transferring that particular WAP to the symmetrized smoother norm. In \cref{cor:damped_two_level_bound}
this transfer cost enters through \(\lambda_{\max}(M_{\omega{\rm J}}^{-1}M)\) and
\(\lambda_{\max}(M^{-1}A)\), giving the estimate
\(K_{\mathrm{TG}}\lesssim
\lambda_{\max}(M_{\omega{\rm J}}^{-1}M)\lambda_{\max}(M^{-1}A)\tau_{\max}\).
Bounding these quantities is important because it shows
whether the two-level estimate is stable with respect to problem parameters, such as mesh refinement and polynomial degree in the finite-element setting.  
We note also that 
\(\lambda_{\max}(M^{-1}A)\) determines the relevant damping scaling factor 
\(\zeta_*=\lambda_{\max}(M^{-1}A)^{-1}\). 
This section gives explicit bounds for these smoother-dependent factors for block Jacobi and overlapping additive Schwarz.
Note that in practice, one could cheaply estimate \(\lambda_{\max}(M^{-1}A)^{-1}\) using a few power iterations, and thus have a practical way to choose the local eigenvalue cutoff to target a desired post-setup constant.
This is analogous in spirit to cutoff selection in algebraic spectral DD
methods, where retained modes are chosen according to target condition numbers
and coloring constants
\cite{aldaas2019coarsespaces,aldaas2022normaleq,AlDaas-2023-twolvlsparse}.

\subsection{Block Jacobi and overlapping additive Schwarz smoothers}
\label{sec:smoother-specialization}

We now specialize \cref{cor:damped_two_level_bound} to the two most natural smoothers in the present SPD setting: aggregate-wise block-Jacobi \(M_{\omega{\rm J}}\) from \eqref{eq:Mjac_def} and overlapping additive Schwarz \(M_{\Omega{\rm AS}}\).
The overlapping additive Schwarz smoother is defined with respect to the overlaps $\{ \Omega_i \}$ in \eqref{eq:overlap_def} by
\begin{equation}
\label{eq:MASinv_def}
M_{\Omega{\rm AS}}^{-1}
:=
\sum_{i=1}^{n_{\rm agg}} R_{\Omega_i}^{\top}A_{\Omega_i}^{-1}R_{\Omega_i},
\qquad
A_{\Omega_i}:=R_{\Omega_i} A R_{\Omega_i}^\top.
\end{equation}
We first show that 
\(
\lambda_{\max}(M_{\omega{\rm J}}^{-1}M_{\Omega{\rm AS}})
\le
\lambda_{\max}(M_{\omega{\rm J}}^{-1} M_{\omega{\rm J}}) = 1\)
(see \cref{lem:as_vs_MaggJac} below),
such that we have the estimate \(K_{\mathrm{TG}}\lesssim \lambda_{\max}(M^{-1}A)\tau_{\max}\) for
\(
M \in \{
M_{\omega{\rm J}},\,
M_{\Omega{\rm AS}} \}
\) (see \cref{lem:two_level_concrete} below).
Next, in \cref{sec:AS}, we develop bounds for \( \lambda_{\max}(M^{-1}A)\).

\begin{lemma}%
\label{lem:as_vs_MaggJac}
It holds that
\(
\|\bm z\|_{M_{\Omega{\rm AS}}}^2
\le
\|\bm z\|_{M_{\omega {\rm J}}}^2\,
\forall
\bm z\in\mathbb R^n.
\)
Consequently,
\[
\lambda_{\max}(M_{\omega {\rm J}}^{-1}M_{\Omega{\rm AS}})
=
\sup_{\bm z\ne 0}
\frac{\|\bm z\|_{M_{\Omega{\rm AS}}}^2}{\|\bm z\|_{M_{\omega {\rm J}}}^2}
\le 1.
\]
\end{lemma}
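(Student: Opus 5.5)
The inequality $M_{\Omega{\rm AS}}\preceq M_{\omega{\rm J}}$ is equivalent, by inversion of SPD matrices, to $M_{\omega{\rm J}}^{-1}\preceq M_{\Omega{\rm AS}}^{-1}$. The plan is to prove this dual form, because both inverses are explicit sums of local terms:
\[
M_{\omega{\rm J}}^{-1}=\sum_{i}R_{\omega_i}^\top A_{\omega_i,\omega_i}^{-1}R_{\omega_i},
\qquad
M_{\Omega{\rm AS}}^{-1}=\sum_i R_{\Omega_i}^\top A_{\Omega_i}^{-1}R_{\Omega_i}.
\]
The block-diagonal form of $M_{\omega{\rm J}}^{-1}$ holds because the aggregates are disjoint. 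It then suffices to prove the termwise inequality
\[
R_{\omega_i}^\top A_{\omega_i,\omega_i}^{-1}R_{\omega_i}\preceq R_{\Omega_i}^\top A_{\Omega_i}^{-1}R_{\Omega_i}
\]
for each $i$, and to sum over $i$.

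For the local step, write $\omega_i\subseteq\Omega_i$, so that $R_{\omega_i}=E_iR_{\Omega_i}$ with $E_i:=[I\;\;0]$ the restriction from $\Omega_i$ to $\omega_i$. The termwise inequality then reduces to the $|\Omega_i|$-dimensional statement
\[
E_i^\top (E_iA_{\Omega_i}E_i^\top)^{-1}E_i\preceq A_{\Omega_i}^{-1}.
\]
This is the standard fact that the inverse of a principal submatrix, padded by zeros, is dominated by the inverse of the full matrix. Two proofs are available. First, $A_{\Omega_i}^{1/2}E_i^\top(E_iA_{\Omega_i}E_i^\top)^{-1}E_iA_{\Omega_i}^{1/2}$ is an orthogonal projector and hence $\preceq I$; conjugating by $A_{\Omega_i}^{-1/2}$ gives the claim. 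Alternatively, one can use the block-inverse formula: $A_{\Omega_i}^{-1}$ minus the padded $A_{\omega_i,\omega_i}^{-1}$ equals $[\,-A_{\omega\omega}^{-1}A_{\omega\Gamma};\,I\,]S^{-1}[\cdots]^\top\succeq0$, where $S$ is the Schur complement onto $\Gamma_i$. I will use the projector argument, since it is the shortest.

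The remaining steps are routine. Summing the termwise inequalities gives $M_{\omega{\rm J}}^{-1}\preceq M_{\Omega{\rm AS}}^{-1}$. The inversion step is the Löwner order reversal: $0\prec X\preceq Y$ implies $Y^{-1}\preceq X^{-1}$, which one sees by noting that $X^{-1/2}YX^{-1/2}\succeq I$ is equivalent to its inverse being $\preceq I$. This yields $M_{\Omega{\rm AS}}\preceq M_{\omega{\rm J}}$, which is exactly the norm inequality in the statement. The eigenvalue characterization then follows from \cref{lem:calN-eig-form} with $(\mathcal M_1,\mathcal M_2)=(M_{\Omega{\rm AS}},M_{\omega{\rm J}})$.

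The only mild subtlety is making sure that $M_{\Omega{\rm AS}}^{-1}$ is SPD, so that $M_{\Omega{\rm AS}}$ exists. This holds because the $\Omega_i$ cover all DOFs, and it also follows directly from $M_{\Omega{\rm AS}}^{-1}\succeq M_{\omega{\rm J}}^{-1}\succ0$. I do not expect a real obstacle; the local principal-submatrix inequality is the key step.
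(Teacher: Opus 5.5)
Your proof is correct, but it argues through the inverse (dual) form rather than the paper's route. The paper works with $M_{\Omega{\rm AS}}$ directly. It invokes the Xu--Zikatanov variational characterization $\|\bm z\|_{M_{\Omega{\rm AS}}}^2=\min_{\bm z=\sum_i R_{\Omega_i}^\top\bm z_i}\sum_i\|\bm z_i\|_{A_{\Omega_i}}^2$. It then inserts the particular admissible decomposition $\bm z_i=D_{\Omega_i}R_{\Omega_i}\bm z$ given by the Boolean partition of unity, whose energy is exactly $\|\bm z\|_{M_{\omega{\rm J}}}^2$.

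You instead compare the two explicit inverses, showing $M_{\omega{\rm J}}^{-1}\preceq M_{\Omega{\rm AS}}^{-1}$ term by term. The key tool is the padded principal-submatrix inequality $E_i^\top(E_iA_{\Omega_i}E_i^\top)^{-1}E_i\preceq A_{\Omega_i}^{-1}$, after which you reverse the L\"owner order.

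\textbf{What your route buys.} It is fully elementary and self-contained, with no appeal to a cited subspace-correction identity. It yields the slightly stronger termwise statement $R_{\omega_i}^\top A_{\omega_i,\omega_i}^{-1}R_{\omega_i}\preceq R_{\Omega_i}^\top A_{\Omega_i}^{-1}R_{\Omega_i}$. It also gives positive definiteness of $M_{\Omega{\rm AS}}^{-1}$ for free. It does rely on two structural facts:
\begin{itemize}
\item $\omega_i\subseteq\Omega_i$, which holds here because $G$ has full column rank, so every DOF lies in the support of some row of $G$;
\item $M_{\omega{\rm J}}$ being block diagonal over a disjoint partition, so that its inverse is an explicit sum of local terms.
\end{itemize}

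\textbf{What the paper's route buys.} The primal argument is more flexible, since any admissible decomposition gives an upper bound on $\|\bm z\|_{M_{\Omega{\rm AS}}}^2$. It therefore extends immediately to, e.g., non-Boolean partitions of unity, where the comparison form $\sum_i R_{\Omega_i}^\top D_i A_{\Omega_i} D_i R_{\Omega_i}$ has no explicit inverse and your termwise dual comparison would not apply. It is also the same decomposition machinery the paper reuses in the proof of \cref{thm:nu_touching_AS}.
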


\begin{proof}
Apply \cite[Thm.~4.4]{xu2017amg} with global space \(V=\mathbb R^n\), local spaces \(X_i:=\mathbb R^{|\Omega_i|}\), injections \(R_{\Omega_i}^{\top}:X_i\to\mathbb R^n\), and exact local solvers \(A_{\Omega_i}^{-1}:X_i\to X_i\).\footnote{In the notation of \cite[\S4.3]{xu2017amg}, our \(X_i\), \(R_{\Omega_i}^{\top}\), and \(A_{\Omega_i}^{-1}\) are their \(V_i\), \(\Pi_i\), and \(R_i\), respectively.} Then
\(B_{{\rm psc}}=\sum_{i=1}^{n_{\rm agg}} R_{\Omega_i}^{\top}A_{\Omega_i}^{-1}R_{\Omega_i}=M_{\Omega{\rm AS}}^{-1}\), and therefore
\begin{equation}\label{eq:Bpsc}
\|\bm z\|_{M_{\Omega{\rm AS}}}^2
=
(B_{{\rm psc}}^{-1}\bm z,\bm z)
=
\min_{\bm z=\sum_{i=1}^{n_{\rm agg}} R_{\Omega_i}^{\top}\bm z_i}
\sum_{i=1}^{n_{\rm agg}} \|\bm z_i\|_{A_{\Omega_i}}^2.
\end{equation}
Now consider the decomposition given by
\(
\widehat{\bm z}_i := D_{\Omega_i}R_{\Omega_i}\bm z, \,  i=1,\dots,n_{\rm agg},
\)
where it is easy to confirm that
\(
\sum_{i=1}^{n_{\rm agg}} R_{\Omega_i}^{\top}\widehat{\bm z}_i
=
\sum_{i=1}^{n_{\rm agg}} R_{\Omega_i}^{\top}D_{\Omega_i}R_{\Omega_i}\bm z
=
\bm z.
\)
Thus \((\widehat{\bm z}_1,\dots,\widehat{\bm z}_{n_{\rm agg}})\) is one element of the admissible set in the minimum in \eqref{eq:Bpsc}, in which case the minimum is bounded above by its value at this choice:
\[
\|\bm z\|_{M_{\Omega{\rm AS}}}^2
\le
\sum_{i=1}^{n_{\rm agg}} \|\widehat{\bm z}_i\|_{A_{\Omega_i}}^2
=
\sum_{i=1}^{n_{\rm agg}} \|D_{\Omega_i}R_{\Omega_i}\bm z\|_{A_{\Omega_i}}^2
=
\sum_{i=1}^{n_{\rm agg}} \|R_{\Omega_i}\bm z\|_{D_{\Omega_i}A_{\Omega_i}D_{\Omega_i}}^2
=
\|\bm z\|_{M_{\omega {\rm J}}}^2.
\]
\end{proof}

\begin{lemma}
\label{lem:two_level_concrete}
Consider 
\(
M \in \{
M_{\omega{\rm J}},\,
M_{\Omega{\rm AS}} \}
\),
and define
\(M_\zeta:=\zeta^{-1}M\).
If \(0<\zeta<2/\lambda_{\max}(M^{-1}A)\), then \(\rho_{M_\zeta}<1\) and
\[
K_{\mathrm{TG}}^\zeta
\le
\frac{\tau_{\max}}{\zeta(2-\zeta \lambda_{\max}(M^{-1}A))}.
\]
Moreover, the right-hand side is minimized at \(\zeta = \zeta_*=1/\lambda_{\max}(M^{-1}A)\), yielding
\(
K_{\mathrm{TG}}^{\zeta_*}
\le
 \lambda_{\max}(M^{-1}A) \, \tau_{\max}.
\)
\end{lemma}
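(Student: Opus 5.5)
The plan is to specialize \cref{cor:damped_two_level_bound} and then control the transfer factor \(\lambda_{\max}(M_{\omega{\rm J}}^{-1}M)\) for the two admissible choices of \(M\).

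First, we check that both candidate smoothers are SPD, so that \cref{cor:damped_two_level_bound} applies. For \(M=M_{\omega{\rm J}}\) this holds because it is block diagonal with SPD principal submatrices of \(A\). For \(M=M_{\Omega{\rm AS}}\), the inverse \(M_{\Omega{\rm AS}}^{-1}=\sum_i R_{\Omega_i}^{\top}A_{\Omega_i}^{-1}R_{\Omega_i}\) is a sum of SPSD terms. It is positive definite because the overlaps cover all DOFs: each \(\omega_i\subseteq\Omega_i\) and the \(\omega_i\) partition \(\{1,\dots,n\}\). Indeed, if \(\bm x^\top M_{\Omega{\rm AS}}^{-1}\bm x=0\), then \(R_{\Omega_i}\bm x=0\) for all \(i\), hence \(\bm x=0\). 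So \(M_{\Omega{\rm AS}}\) is SPD.

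With SPD \(M\), \cref{cor:damped_two_level_bound} gives two things immediately. First, \(\rho_{M_\zeta}<1\) if and only if \(0<\zeta<2/\lambda_{\max}(M^{-1}A)\). Second, on that range,
\[
K_{\mathrm{TG}}^\zeta\le \frac{\lambda_{\max}(M_{\omega{\rm J}}^{-1}M)}{\zeta(2-\zeta\lambda_{\max}(M^{-1}A))}\,\tau_{\max}.
\]
It therefore suffices to show \(\lambda_{\max}(M_{\omega{\rm J}}^{-1}M)\le 1\) in both cases. For \(M=M_{\omega{\rm J}}\) this quantity equals exactly \(1\). For \(M=M_{\Omega{\rm AS}}\), the bound is precisely \cref{lem:as_vs_MaggJac}. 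Substituting \(1\) for the transfer factor yields the stated bound on \(K_{\mathrm{TG}}^\zeta\).

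For the minimization, set \(\beta:=\lambda_{\max}(M^{-1}A)\). The function \(\zeta\mapsto\zeta(2-\zeta\beta)\) is a concave quadratic on \((0,2/\beta)\) with maximum at \(\zeta_*=1/\beta\), where it takes the value \(1/\beta\). Hence the right-hand side is minimized at \(\zeta_*\) and equals \(\beta\,\tau_{\max}\). This is also the last assertion of \cref{cor:damped_two_level_bound} with transfer factor \(1\).

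There is no real obstacle. The only point needing care is confirming that \(M_{\Omega{\rm AS}}\) is SPD, so that \cref{cor:damped_two_level_bound}, which assumes SPD \(M\), is applicable; the covering argument above handles this.
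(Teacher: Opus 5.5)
Your proposal is correct and follows essentially the same route as the paper: specialize \cref{cor:damped_two_level_bound} and bound the transfer factor $\lambda_{\max}(M_{\omega{\rm J}}^{-1}M)$ by $1$ via \cref{lem:as_vs_MaggJac} (trivially for $M=M_{\omega{\rm J}}$), with the minimization inherited from that corollary. Your covering argument that $M_{\Omega{\rm AS}}$ is SPD, which the paper leaves implicit, is a correct and welcome check of the corollary's SPD hypothesis.
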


\begin{proof}
From \cref{cor:damped_two_level_bound}, for
\(
M \in \{M_{\omega{\rm J}},\,M_{\Omega{\rm AS}}\}
\),
one has
\[
K_{\mathrm{TG}}^\zeta
\le
\frac{\lambda_{\max}(M_{\omega{\rm J}}^{-1}M)\, \tau_{\max}}
{\zeta\bigl(2-\zeta\,\lambda_{\max}(M^{-1}A)\bigr)}
\le
\frac{\tau_{\max}}
{\zeta\bigl(2-\zeta\,\lambda_{\max}(M^{-1}A)\bigr)},
\]
noting that \(\lambda_{\max}(M_{\omega{\rm J}}^{-1}M_{\Omega{\rm AS}})\le 1\) from \cref{lem:as_vs_MaggJac}.
Minimization of the bound follows analogously to \cref{cor:damped_two_level_bound}.

\end{proof}

\subsubsection{Additive Schwarz bounds}
\label{sec:AS}

To explicitly quantify two-level convergence, it thus remains to bound the smoother
factor \(\lambda_{\max}(M^{-1}A)\). Standard additive Schwarz estimates usually
bound this factor by a coloring constant associated with the subdomain
interaction graph. In the present setting, the Gram representation
\(A=G^\top G=\sum_{j=1}^m \bm g_j\bm g_j^\top\) gives a more direct row-wise
bound: the relevant constant $\nu_{\mathcal Q}$ defined in \cref{thm:nu_touching_AS} below counts how many Schwarz subdomains can be touched by a
single row of the global Gram factor \(G \in \mathbb{R}^{m \times n}\).
Note that we state the estimate for a generic subdomain cover
\(\{\mathcal Q_i\}_{i=1}^{n_{\mathcal Q}}\). This avoids separating the
nonoverlapping and overlapping cases, since choosing
\(\mathcal Q_i=\omega_i\) gives the block Jacobi smoother
\(M_{\omega{\rm J}}^{-1}\), while choosing \(\mathcal Q_i=\Omega_i\) gives the
overlapping additive Schwarz smoother \(M_{\Omega{\rm AS}}^{-1}\).\footnote{In \cite{krzysik2026conforming} we show an equivalence between the existence of a matrix $A$ admitting a locally-supported Gram representation and a local SPSD splitting. Therefore, the bound in \cref{thm:nu_touching_AS} can be reformulated for matrices $A$ given by an SPSD splitting instead of $A = G^\top G$.} 

Combining \cref{lem:two_level_concrete} with \cref{thm:nu_touching_AS} gives
\begin{align} \label{eq:K_concrete_normalized}
    K_{\mathrm{TG}}^{\zeta_*}
    \le
    \tau_{\max} \, \nu_{\omega}
    \quad\text{for } M=M_{\omega{\rm J}},
    \qquad
    K_{\mathrm{TG}}^{\zeta_*}
    \le
    \tau_{\max} \, \nu_{\Omega}
    \quad\text{for } M=M_{\Omega{\rm AS}}.
\end{align}
For standard finite-element discretizations these estimates
are mesh independent since the corresponding Gram factors $G$ have bounded and localized row supports \cite{krzysik2026conforming}.

\begin{theorem}
\label{thm:nu_touching_AS}
Let \(G\in\mathbb R^{m\times n}\) have full column rank, set
\(A=G^\top G\), and let \(\{\mathcal Q_i\}_{i=1}^{n_{\mathcal Q}}\) be
nonempty subsets of \(\{1,\ldots,n\}\) whose union is \(\{1,\ldots,n\}\).
For \(A_{\mathcal Q_i}:=R_{\mathcal Q_i}AR_{\mathcal Q_i}^{\top}\), define
the exact additive Schwarz preconditioner
\[
    M_{\mathcal Q{\rm AS}}^{-1}
    :=
    \sum_{i=1}^{n_{\mathcal Q}}
    R_{\mathcal Q_i}^{\top}A_{\mathcal Q_i}^{-1}R_{\mathcal Q_i}.
\]
Writing \(\{\bm g_j^\top\}_{j=1}^m\) for the rows of \(G\), let
\(\mathfrak R_{\mathcal Q_i}:=\{j:\operatorname{supp}(\bm g_j^\top)\cap
\mathcal Q_i\neq\emptyset\}\), let
\(\operatorname{mult}_{\mathcal Q}(j):=
|\{i:j\in\mathfrak R_{\mathcal Q_i}\}|\), and set
\(\nu_{\mathcal Q}:=\max_j\operatorname{mult}_{\mathcal Q}(j)\). Then
\[
    \lambda_{\max}(M_{\mathcal Q{\rm AS}}^{-1}A)
    \le
    \nu_{\mathcal Q}.
\]
\end{theorem}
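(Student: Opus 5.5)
We bound \(\lambda_{\max}(M_{\mathcal Q{\rm AS}}^{-1}A)\) through the Rayleigh quotient in the \(A\)-inner product. Write \(P_i:=R_{\mathcal Q_i}^{\top}A_{\mathcal Q_i}^{-1}R_{\mathcal Q_i}A\), the \(A\)-orthogonal projector onto \(\range(R_{\mathcal Q_i}^\top)\), so that \(M_{\mathcal Q{\rm AS}}^{-1}A=\sum_i P_i\) is \(A\)-self-adjoint. Hence
\[
\lambda_{\max}(M_{\mathcal Q{\rm AS}}^{-1}A)=\sup_{\bm v\neq0}\frac{\sum_i\|P_i\bm v\|_A^2}{\|\bm v\|_A^2}.
\]
It therefore suffices to show \(\sum_i\|P_i\bm v\|_A^2\le\nu_{\mathcal Q}\|\bm v\|_A^2\) for every \(\bm v\).

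\textbf{Step 1 (local estimate).} Fix \(i\) and set \(\bm w_i:=P_i\bm v\), which is supported on \(\mathcal Q_i\). Since \(P_i\) is an \(A\)-orthogonal projector,
\[
\|\bm w_i\|_A^2=(\bm v,\bm w_i)_A=\sum_{j=1}^m(\bm g_j^\top\bm v)(\bm g_j^\top\bm w_i).
\]
Because \(\bm w_i\) vanishes off \(\mathcal Q_i\), \(\bm g_j^\top\bm w_i=0\) whenever \(j\notin\mathfrak R_{\mathcal Q_i}\). Therefore the sum restricts to \(j\in\mathfrak R_{\mathcal Q_i}\), and Cauchy--Schwarz gives
\[
\|\bm w_i\|_A^2\le\Bigl(\sum_{j\in\mathfrak R_{\mathcal Q_i}}(\bm g_j^\top\bm v)^2\Bigr)^{1/2}\Bigl(\sum_{j\in\mathfrak R_{\mathcal Q_i}}(\bm g_j^\top\bm w_i)^2\Bigr)^{1/2}.
\]
The second factor is at most \(\|\bm w_i\|_A\). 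Dividing through yields
\[
\|P_i\bm v\|_A^2\le\sum_{j\in\mathfrak R_{\mathcal Q_i}}(\bm g_j^\top\bm v)^2.
\]
An equivalent route is to note that \(\|P_i\bm v\|_A^2\) is the minimal energy over extensions, i.e. a harmonic-extension/Schur-type bound. The Cauchy--Schwarz argument above is more direct.

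\textbf{Step 2 (summation).} Summing over \(i\) and exchanging the order of summation,
\[
\sum_i\|P_i\bm v\|_A^2\le\sum_{j=1}^m\operatorname{mult}_{\mathcal Q}(j)\,(\bm g_j^\top\bm v)^2\le\nu_{\mathcal Q}\|G\bm v\|_2^2=\nu_{\mathcal Q}\|\bm v\|_A^2.
\]
This is the required bound.

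The main point to get right is Step 1. The localization \(\bm g_j^\top\bm w_i=0\) for \(j\notin\mathfrak R_{\mathcal Q_i}\) relies on \(\operatorname{supp}(\bm w_i)\subseteq\mathcal Q_i\) and on the definition of \(\mathfrak R_{\mathcal Q_i}\). One must also bound the restricted energy \(\sum_{j\in\mathfrak R_{\mathcal Q_i}}(\bm g_j^\top\bm w_i)^2\) by the full energy \(\|\bm w_i\|_A^2\), which holds trivially since the omitted terms are nonnegative (indeed zero). The remaining steps are bookkeeping. Covering of \(\{1,\dots,n\}\) is needed only to ensure that \(M_{\mathcal Q{\rm AS}}\) is invertible, so that \(\lambda_{\max}\) is well defined.
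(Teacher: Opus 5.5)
Your proof is correct, and it takes a genuinely different route from the paper's; the two arguments are dual to each other. The paper starts from the Xu--Zikatanov variational identity $\bm x^\top M_{\mathcal Q{\rm AS}}\bm x=\min_{\bm x=\sum_i R_{\mathcal Q_i}^\top\bm x_i}\sum_i\|\bm x_i\|_{A_{\mathcal Q_i}}^2$ and fixes an arbitrary admissible decomposition. It then applies Cauchy--Schwarz for each row $j$ across the at most $\nu_{\mathcal Q}$ subdomains that row touches. Minimizing over decompositions gives $A\preceq\nu_{\mathcal Q}M_{\mathcal Q{\rm AS}}$.

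You instead work with $M_{\mathcal Q{\rm AS}}^{-1}A=\sum_iP_i$ and apply Cauchy--Schwarz for each subdomain $i$ across the rows in $\mathfrak R_{\mathcal Q_i}$. This yields the termwise estimate $AR_{\mathcal Q_i}^\top A_{\mathcal Q_i}^{-1}R_{\mathcal Q_i}A\preceq\sum_{j\in\mathfrak R_{\mathcal Q_i}}\bm g_j\bm g_j^\top$. Counting multiplicities then gives $AM_{\mathcal Q{\rm AS}}^{-1}A\preceq\nu_{\mathcal Q}A$, which is equivalent to the paper's conclusion.

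Your argument is more elementary and self-contained. It uses only that each $P_i$ is an $A$-orthogonal projector onto vectors supported in $\mathcal Q_i$, not the fictitious-space characterization of $B_{\rm psc}^{-1}$. It also produces a genuinely local statement: the energy captured by each local solve is dominated by a local, unweighted Gram energy. As a result, $\nu_{\mathcal Q}$ appears transparently as the overcounting factor when these local Gram matrices are summed, much as in the projection-based coloring argument the paper gives afterwards.

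The paper's route instead delivers the bound in the primal form $A\preceq\nu_{\mathcal Q}M_{\mathcal Q{\rm AS}}$. That is exactly the norm-comparison constant $\mathcal N_{A,M_{\mathcal Q{\rm AS}}}$ language used throughout \cref{sec:two_level}, and it reuses the decomposition identity already employed in \cref{lem:as_vs_MaggJac}. (Minor: when you divide by $\|\bm w_i\|_A$ in Step 1 you implicitly assume $\bm w_i\neq 0$; the case $\bm w_i=0$ is trivial.)
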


\begin{proof}
By the variational characterization of exact additive Schwarz, for every \(\bm x\in\mathbb R^n\),\footnote{Here we use \cite[Thm~4.4]{xu2017amg} with
\(V=\mathbb R^n\), \(V_i=\mathbb R^{|\mathcal Q_i|}\),
\(\Pi_i=R_{\mathcal Q_i}^{\top}\), and
\(R_i=A_{\mathcal Q_i}^{-1}\). Then their parallel subspace correction
operator \(B_{\rm psc}=\sum_i\Pi_iR_i\Pi_i^\top\) is
\(M_{\mathcal Q{\rm AS}}^{-1} = \sum_{i=1}
    R_{\mathcal Q_i}^{\top}A_{\mathcal Q_i}^{-1}R_{\mathcal Q_i}\), and their identity
\((B_{\rm psc}^{-1}v,v)=
\min_{\sum_i\Pi_i v_i=v}\sum_i(R_i^{-1}v_i,v_i)\)
gives the displayed variational characterization. The covering condition
\(\bigcup_i\mathcal Q_i=\{1,\ldots,n\}\) is precisely the decomposition
assumption \(V=\sum_i\Pi_iV_i\) from \cite[Sec.~4.3]{xu2017amg}}
\[
\bm x^\top M_{\mathcal Q{\rm AS}}\bm x
=
\min_{\bm x=\sum_{i=1}^{n_{\mathcal Q}} R_{\mathcal Q_i}^{\top}\bm x_i}
\sum_{i=1}^{n_{\mathcal Q}} \|\bm x_i\|_{A_{\mathcal Q_i}}^2.
\]
Now fix any admissible decomposition
\(\bm x=\sum_{i=1}^{n_{\mathcal Q}} R_{\mathcal Q_i}^{\top}\bm x_i\).
For each row index \(j\), since
\(\bm g_j^\top R_{\mathcal Q_i}^\top=0\) whenever
\(j\notin \mathfrak R_{\mathcal Q_i}\), we have
\(
\bm g_j^\top \bm x
=
\sum_{i=1}^{n_{\mathcal Q}}\bm g_j^\top R_{\mathcal Q_i}^{\top}\bm x_i
=
\sum_{i:\, j\in\mathfrak R_{\mathcal Q_i}} \bm g_j^\top R_{\mathcal Q_i}^{\top}\bm x_i.
\)
For a fixed row index \(j\), set
\(S_j:=\{i:\,j\in\mathfrak R_{\mathcal Q_i}\}\). The preceding identity reads
\(\bm g_j^\top\bm x=\sum_{i\in S_j}\bm g_j^\top R_{\mathcal Q_i}^{\top}\bm x_i\),
and \(|S_j|=\operatorname{mult}_{\mathcal Q}(j)\). We now use the Cauchy--Schwarz inequality applied to \(( \boldsymbol{\theta},\mathbf{1} )\) in the form
\(
\big|\sum_{i\in S_j}\theta_i\big|^2
\le
|S_j|\sum_{i\in S_j}|\theta_i|^2 .
\)
Applying this with
\(\theta_i=\bm g_j^\top R_{\mathcal Q_i}^{\top}\bm x_i\) gives
\[
|\bm g_j^\top \bm x|^2
\le
\operatorname{mult}_{\mathcal Q}(j)
\sum_{i\in S_j}
|\bm g_j^\top R_{\mathcal Q_i}^{\top}\bm x_i|^2
\le
\nu_{\mathcal Q}
\sum_{i=1}^{n_{\mathcal Q}}
|\bm g_j^\top R_{\mathcal Q_i}^{\top}\bm x_i|^2 .
\]
Here the last step uses
\(\operatorname{mult}_{\mathcal Q}(j)\le\nu_{\mathcal Q}\) and the fact that
\(\bm g_j^\top R_{\mathcal Q_i}^{\top}=0\) for \(i\notin S_j\).

Summing over \(j\) gives
\[
\bm x^\top A\bm x
=
\bigl(
G \bm{x}, G \bm{x}
\bigr)
=
\sum_{j=1}^m |\bm g_j^\top \bm x|^2
\le
\nu_{\mathcal Q}\sum_{i=1}^{n_{\mathcal Q}}\sum_{j=1}^m |\bm g_j^\top R_{\mathcal Q_i}^{\top}\bm x_i|^2
=
\nu_{\mathcal Q}\sum_{i=1}^{n_{\mathcal Q}} \|\bm x_i\|_{A_{\mathcal Q_i}}^2,
\]
where the last equality follows from
\(
A_{\mathcal Q_i}=R_{\mathcal Q_i}AR_{\mathcal Q_i}^\top
=\sum_{j=1}^m (\bm g_j^\top R_{\mathcal Q_i}^\top)^\top(\bm g_j^\top R_{\mathcal Q_i}^\top).
\)
Since the bound above holds for every admissible decomposition
\(\bm x=\sum_{i=1}^{n_{\mathcal Q}} R_{\mathcal Q_i}^{\top}\bm x_i\), and the
left-hand side is independent of that decomposition, we may take the minimum
over all such tuples on the right to obtain
\[
\bm x^\top A\bm x
\le
\nu_{\mathcal Q}
\min_{\bm x=\sum_{i=1}^{n_{\mathcal Q}} R_{\mathcal Q_i}^{\top}\bm x_i}
\sum_{i=1}^{n_{\mathcal Q}}\|\bm x_i\|_{A_{\mathcal Q_i}}^2
=
\nu_{\mathcal Q}\,\bm x^\top M_{\mathcal Q{\rm AS}}\bm x.
\]
Hence,
\[
\lambda_{\max}(M_{\mathcal Q{\rm AS}}^{-1}A)
=
\sup_{\bm x\ne 0}
\frac{\bm x^\top A\bm x}{\bm x^\top M_{\mathcal Q{\rm AS}}\bm x}
\le
\nu_{\mathcal Q}.
\]
\end{proof}

It is useful to compare \(\nu_{\mathcal Q}\) with the coloring constants that arise in
standard additive Schwarz estimates. Define the row-touching incidence matrix
\(\mathcal E_{\mathcal Q}\in\{0,1\}^{m\times n_{\mathcal Q}}\) by
\[
    (\mathcal E_{\mathcal Q})_{ji}
    =
    \begin{cases}
    1, & \operatorname{supp}(\bm g_j^\top)\cap\mathcal Q_i\neq\emptyset,\\
    0, & \text{otherwise}.
    \end{cases}
\]
Thus the \(j\)th row of \(\mathcal E_{\mathcal Q}\) records which Schwarz
subdomains are touched by the \(j\)th row of \(G\), and
\(\nu_{\mathcal Q}=\max_j\sum_i(\mathcal E_{\mathcal Q})_{ji}
=\|\mathcal E_{\mathcal Q}\|_\infty\).

Let \(\mathcal C_{\mathcal Q}\in\{0,1\}^{n_{\mathcal Q}\times n_{\mathcal Q}}\)
be the off-diagonal Boolean pattern of
\(\mathcal E_{\mathcal Q}^\top\mathcal E_{\mathcal Q}\). Equivalently,
\((\mathcal C_{\mathcal Q})_{ik}=1\) if and only if \(i\neq k\) and there is a
row \(\bm g_j^\top\) whose support intersects both \(\mathcal Q_i\) and
\(\mathcal Q_k\).
For a fixed row \(j\), the contribution of the rank-one matrix
\(\bm g_j\bm g_j^\top\) to the interaction between the two Schwarz subdomains
\(\mathcal Q_i\) and \(\mathcal Q_k\) is the restricted matrix
\(
    R_{\mathcal Q_i}\bm g_j\bm g_j^\top R_{\mathcal Q_k}^{\top}
    =
    (R_{\mathcal Q_i}\bm g_j)(R_{\mathcal Q_k}\bm g_j)^\top .
\)
This restricted outer product is nonzero exactly when
\(\operatorname{supp}(\bm g_j^\top)\) intersects both \(\mathcal Q_i\) and
\(\mathcal Q_k\). Hence \((\mathcal C_{\mathcal Q})_{ik}=1\) exactly when
some row outer product in \(A=G^\top G=\sum_{j=1}^m\bm g_j\bm g_j^\top\)
gives a nonzero restricted contribution between \(\mathcal Q_i\) and
\(\mathcal Q_k\).\footnote{The standard \(A\)-interaction graph for the
Schwarz subspaces has an edge \(i\)--\(k\) when
\(R_{\mathcal Q_i}AR_{\mathcal Q_k}^{\top}\neq0\). The graph
\(\mathcal C_{\mathcal Q}\) is always a supergraph of this graph: if
\((\mathcal C_{\mathcal Q})_{ik}=0\), then no row of \(G\) touches both
\(\mathcal Q_i\) and \(\mathcal Q_k\), and hence
\(R_{\mathcal Q_i}AR_{\mathcal Q_k}^{\top}=0\). Equivalently,
\(R_{\mathcal Q_i}AR_{\mathcal Q_k}^{\top}\neq0\) implies
\((\mathcal C_{\mathcal Q})_{ik}=1\). The converse implication holds whenever
the row outer-product assembly of \(A=G^\top G\) produces no block-level
cancellations; see also \cite{krzysik2026conforming}. In that case,
\(\mathcal C_{\mathcal Q}\) is exactly the standard \(A\)-interaction graph.}

Let \(\chi_{\mathcal Q}:=\chi(\mathcal C_{\mathcal Q})\) be the chromatic number of \(\mathcal C_{\mathcal Q}\); that is,
\(\chi_{\mathcal Q}\) is the smallest number of colors needed to color the
Schwarz subdomains so that \(c(i)\neq c(k)\) whenever
\((\mathcal C_{\mathcal Q})_{ik}=1\). This coloring is admissible for the
standard additive Schwarz projection argument; see, e.g.,
\cite[Sec.~2]{toselli2004domain}. Indeed, if two indices have the same color,
then \((\mathcal C_{\mathcal Q})_{ik}=0\), so
\(R_{\mathcal Q_i}AR_{\mathcal Q_k}^{\top}=0\), and the corresponding
Schwarz subspaces are \(A\)-orthogonal. Writing
\(M_{\mathcal Q{\rm AS}}^{-1}A=\sum_{i=1}^{n_{\mathcal Q}}P_i\), with
\(P_i=R_{\mathcal Q_i}^{\top}A_{\mathcal Q_i}^{-1}R_{\mathcal Q_i}A\) and
\(A_{\mathcal Q_i}=R_{\mathcal Q_i}AR_{\mathcal Q_i}^{\top}\), each \(P_i\)
is the \(A\)-orthogonal projection onto \(\range(R_{\mathcal Q_i}^{\top})\).
Therefore, within each color class the corresponding projections sum to an
\(A\)-orthogonal projection, and summing over the \(\chi_{\mathcal Q}\)
color classes gives
\[
    \lambda_{\max}(M_{\mathcal Q{\rm AS}}^{-1}A)\le \chi_{\mathcal Q}.
\]

The following lemma records the resulting comparison between the row-touching
constant from \cref{thm:nu_touching_AS} and the coloring constant.

\begin{lemma}
\label{lem:nu_le_chi}
With \(\nu_{\mathcal Q}\) and \(\chi_{\mathcal Q}\) defined above,
\[
    \nu_{\mathcal Q}\le \chi_{\mathcal Q}.
\]
\end{lemma}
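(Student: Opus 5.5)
We prove \(\nu_{\mathcal Q}\le\chi_{\mathcal Q}\) by a clique argument. A single row of \(G\) touches a set of subdomains that must be pairwise adjacent in \(\mathcal C_{\mathcal Q}\), so any admissible coloring needs at least as many colors as the size of that set. Taking the row that attains \(\nu_{\mathcal Q}\) then gives the bound.

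First, fix a row index \(j^\ast\) attaining the maximum in \(\nu_{\mathcal Q}=\max_j\operatorname{mult}_{\mathcal Q}(j)\). Set \(S_{j^\ast}:=\{i: j^\ast\in\mathfrak R_{\mathcal Q_i}\}\), so that \(|S_{j^\ast}|=\nu_{\mathcal Q}\). Equivalently, \(S_{j^\ast}\) is the support of the \(j^\ast\)th row of \(\mathcal E_{\mathcal Q}\).

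Second, take any two distinct indices \(i,k\in S_{j^\ast}\). The support of \(\bm g_{j^\ast}^\top\) meets both \(\mathcal Q_i\) and \(\mathcal Q_k\). Hence \((\mathcal E_{\mathcal Q}^\top\mathcal E_{\mathcal Q})_{ik}\ge(\mathcal E_{\mathcal Q})_{j^\ast i}(\mathcal E_{\mathcal Q})_{j^\ast k}=1\), and since \(i\neq k\), the definition of \(\mathcal C_{\mathcal Q}\) gives \((\mathcal C_{\mathcal Q})_{ik}=1\). Thus \(S_{j^\ast}\) is a clique of size \(\nu_{\mathcal Q}\) in the graph \(\mathcal C_{\mathcal Q}\).

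Third, an admissible coloring assigns distinct colors to adjacent vertices. Every pair of vertices in the clique is adjacent, so the coloring uses \(\nu_{\mathcal Q}\) distinct colors on \(S_{j^\ast}\) alone. Therefore \(\chi_{\mathcal Q}\ge\nu_{\mathcal Q}\); this is the standard fact that the chromatic number is at least the clique number.

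There is no real obstacle here. The only care needed is to use the off-diagonal definition of \(\mathcal C_{\mathcal Q}\) correctly and to note the degenerate case \(\nu_{\mathcal Q}=1\). That case holds trivially, since \(\chi_{\mathcal Q}\ge1\) for a nonempty set of subdomains. The inequality can be strict, because \(\mathcal C_{\mathcal Q}\) may contain odd cycles or other structures whose chromatic number exceeds the largest row-induced clique.
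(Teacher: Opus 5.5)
Your proof is correct and follows essentially the same route as the paper: the paper also observes that each row's touched set \(S_j\) forms a clique in \(\mathcal C_{\mathcal Q}\), so \(\chi_{\mathcal Q}\ge|S_j|\) for every \(j\), and then takes the maximum over \(j\). The only difference is that you select a maximizing row up front rather than bounding all rows and maximizing afterward.
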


\begin{proof}
For a fixed row index \(j \in \{1,\ldots,m\}\), define
\(S_j:=\{i:(\mathcal E_{\mathcal Q})_{ji}=1\}\). If \(i,k\in S_j\) and
\(i\neq k\), then row \(j\) touches both \(\mathcal Q_i\) and
\(\mathcal Q_k\), so \((\mathcal C_{\mathcal Q})_{ik}=1\). Hence all
indices in \(S_j\) must receive distinct colors in any valid coloring of
\(\mathcal C_{\mathcal Q}\). Therefore \(\chi_{\mathcal Q}\ge |S_j|\) for
every \(j\). Taking the maximum over \(j\) gives
\(
    \chi_{\mathcal Q}
    \ge
    \max_{1\le j\le m}|S_j|
    =
    \nu_{\mathcal Q}.
\)
\end{proof}

The comparison in \cref{lem:nu_le_chi} is not generally sharp. The following
example shows that the row-touching constant can be smaller than the
corresponding coloring constant.

\begin{lemma}
\label{lem:nu_strictly_smaller_than_coloring}
The inequality \(\nu_{\mathcal Q}\le \chi_{\mathcal Q}\) can be strict.
\end{lemma}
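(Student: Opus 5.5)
The plan is to prove the strict inequality with an explicit small example. I will choose a Gram factor in which every row touches only two subdomains, while the row-touching graph \(\mathcal C_{\mathcal Q}\) contains an odd cycle. An odd cycle forces three colors, whereas \(\nu_{\mathcal Q}\) only sees pairs.

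Concretely, take \(n=m=3\) with the nonoverlapping cover \(\mathcal Q_i=\{i\}\), \(i=1,2,3\), and
\[
G=\begin{bmatrix}1&1&0\\0&1&1\\1&0&1\end{bmatrix},
\qquad
A=G^\top G=\begin{bmatrix}2&1&1\\1&2&1\\1&1&2\end{bmatrix}.
\]
The steps are as follows.

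First, check the hypotheses of \cref{thm:nu_touching_AS}. Expanding along the first row gives \(\det G=1\cdot(1\cdot1-1\cdot0)-1\cdot(0\cdot1-1\cdot1)+0=2\neq0\). Hence \(G\) has full column rank and \(A\) is SPD, and the sets \(\mathcal Q_i\) are nonempty and cover \(\{1,2,3\}\).

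Second, read off the incidence matrix: \(\mathcal E_{\mathcal Q}=G\), since each entry of \(G\) is \(0\) or \(1\) and \(\mathcal Q_i=\{i\}\). Every row has exactly two ones, so \(\operatorname{mult}_{\mathcal Q}(j)=2\) for all \(j\), and therefore \(\nu_{\mathcal Q}=2\).

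Third, compute \(\mathcal C_{\mathcal Q}\). Row 1 links subdomains \(1\)--\(2\), row 2 links \(2\)--\(3\), and row 3 links \(1\)--\(3\). Thus \(\mathcal C_{\mathcal Q}\) is the complete graph on three vertices, and \(\chi_{\mathcal Q}=3>2=\nu_{\mathcal Q}\), which proves the strict inequality.

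As an optional sanity remark, I would also note that the bound of \cref{thm:nu_touching_AS} is attained in this example. Here \(M_{\mathcal Q{\rm AS}}=\operatorname{diag}(A)=2I\), and the eigenvalues of \(A\) are \(4,1,1\). Hence \(\lambda_{\max}(M_{\mathcal Q{\rm AS}}^{-1}A)=2=\nu_{\mathcal Q}\), so the row-touching bound is sharp while the coloring bound is not.

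There is no real obstacle. The only point needing care is confirming that \(\mathcal C_{\mathcal Q}\) is defined via row touching rather than via \(R_{\mathcal Q_i}AR_{\mathcal Q_k}^\top\neq0\). In this example the two notions coincide, because every off-diagonal entry of \(A\) equals \(1\), so either definition yields the triangle.
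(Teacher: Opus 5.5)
Your proof is correct and is essentially the paper's own argument. The paper takes singleton subdomains and \(G=\bm 1\bm 1^\top-I\); for \(n=3\) this has exactly the rows of your \(G\) up to reordering, and the general family gives \(\nu_{\mathcal Q}=n-1<n=\chi_{\mathcal Q}\) for every \(n\ge 3\) rather than the single instance \(2<3\) (your extra sharpness remark \(\lambda_{\max}(M_{\mathcal Q{\rm AS}}^{-1}A)=\nu_{\mathcal Q}\) is also correct, and in fact holds for the whole family since \(A=(n-2)\bm 1\bm 1^\top+I\) and \(M_{\mathcal Q{\rm AS}}=(n-1)I\)).
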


\begin{proof}
We prove this by example, showing a class of problems for which \(
    \nu_{\mathcal Q}=n-1<n=\chi_{\mathcal Q}.
\)
Let \(n\ge 3\), set \(m=n_{\mathcal Q}=n\), and let
\(\mathcal Q_i=\{i\}\), \(i=1,\ldots,n\). Define
\[
    G
    =
    \bm 1\bm 1^\top - I
    =
    \begin{bmatrix}
    0 & 1 & 1 & \cdots & 1 \\
    1 & 0 & 1 & \cdots & 1 \\
    1 & 1 & 0 & \ddots & \vdots \\
    \vdots & \vdots & \ddots & \ddots & 1 \\
    1 & 1 & \cdots & 1 & 0
    \end{bmatrix}.
\]
The eigenvalues of \(G\) are \(n-1\) and \(-1\) with multiplicity \(n-1\), so
\(G\) has full column rank. 
Hence, all assumptions of \cref{thm:nu_touching_AS} are met.
Since the Schwarz subdomains are singletons and
\(G\) has only zero-one entries, \(\mathcal E_{\mathcal Q}=G\). Hence each
row of \(\mathcal E_{\mathcal Q}\) has \(n-1\) nonzero entries, and
\(\nu_{\mathcal Q} = \| {\cal E}_{\cal Q} \|_{\infty} = n-1\).

Since \(n\ge 3\), every pair of distinct columns of
\(\mathcal E_{\mathcal Q}\) has a common nonzero entry. Therefore
\(
    \mathcal C_{\mathcal Q}
    =
    \bm 1\bm 1^\top-I
    =
    \mathcal E_{\mathcal Q}.
\)
Thus \(\chi_{\mathcal Q}=n\), and consequently
\(
    \nu_{\mathcal Q}=n-1<n=\chi_{\mathcal Q}.
\)
\end{proof}

The preceding example is purely algebraic; it shows that the comparison
\(\nu_{\mathcal Q}\le \chi_{\mathcal Q}\) need not be sharp under the
abstract hypotheses of \cref{thm:nu_touching_AS}. In
\cref{sec:AS_num}, we examine the corresponding quantities for the
coordinate covers arising from our aggregate construction. There we compare
\(\nu_\Omega\) with \(\chi_\Omega\) and with practical upper bounds for
\(\chi_\Omega\), in order to assess how much is lost by replacing the
row-touching estimate with a coloring-based one.

\subsection{Block-Jacobi for $\mathsf{M}$-matrices}
\label{sec:SPDDM}

Upon specializing \cref{cor:damped_two_level_bound} to block Jacobi,
\(M=M_{\omega{\rm J}}\), the only remaining smoother-dependent quantity is
\(\lambda_{\max}(M_{\omega{\rm J}}^{-1}A)\). The general Gram-touching
argument in \cref{thm:nu_touching_AS} bounds this quantity in
terms of multiplicity-like constants. We next record a sharper bound for the
case in which \(A\) is an $\mathsf{M}$-matrix.
Under the standing assumption \(A=A^\top\succ0\), the additional condition
\(a_{pq}\le0\) for \(p\neq q\) in the lemma below means that \(A\) is a Stieltjes matrix
\cite[Def. 3.23]{varga1999matrix}, equivalently a symmetric nonsingular $\mathsf{M}$-matrix
\cite[Cor. 3.24]{varga1999matrix}. This class includes many standard scalar finite-difference discretizations of Laplacian-like operators.
Remarkably, this result can then be used to show uniform two-level convergence bounds of the form $K_{\rm TG} \lesssim 2 \tau_{\max}$. %

\begin{lemma}
\label{lem:A_le_2_MomegaJ}
Let \(A=(a_{pq})\) satisfy \(a_{pq}\le0\) for \(p\neq q\), and let
\(M_{\omega{\rm J}}\) be as in \eqref{eq:Mjac_def}. Then
\[
    \lambda_{\max}(M_{\omega{\rm J}}^{-1}A)<2 .
\]
\end{lemma}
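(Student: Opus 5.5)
We need to show $2M_{\omega{\rm J}} - A \succ 0$; equivalently, for every nonzero $\bm x$, $\bm x^\top A\bm x < 2\,\bm x^\top M_{\omega{\rm J}}\bm x$. Here $A$ is assumed SPD (standing assumption) with nonpositive off-diagonal entries. The plan is the classical sign-flip argument. Write $A = M_{\omega{\rm J}} + N$, where $N$ collects the inter-aggregate blocks $A_{\omega_i,\omega_k}$ for $i\neq k$. Then
\[
2M_{\omega{\rm J}} - A = M_{\omega{\rm J}} - N .
\]
Our goal is to show that $M_{\omega{\rm J}}-N$ is SPD.

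The key step is to construct a matrix unitarily similar (indeed congruent by an orthogonal diagonal matrix) to $M_{\omega{\rm J}}-N$ that has nonpositive off-diagonal entries and is SPD. The natural first attempt is the signature matrix $\Sigma = \operatorname{diag}(\pm1)$ that is $+1$ on some aggregates and $-1$ on others. This flips the sign of $N$ only when the aggregate graph is bipartite, so it does not work in general.

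Instead, we use a Perron--Frobenius/comparison argument directly on $|\cdot|$. For any $\bm x$, let $|\bm x|$ denote the entrywise absolute value. Since $N$ has only nonpositive entries,
\[
-\bm x^\top N\bm x \le |\bm x|^\top(-N)|\bm x| = -|\bm x|^\top N|\bm x| .
\]
This bound alone goes the wrong way for what we need, so we bound $\bm x^\top M_{\omega{\rm J}}\bm x$ from below instead. Let $M_{\omega{\rm J}} = \Delta + B$, where $\Delta$ is the diagonal of $A$ and $B\le 0$ contains the intra-aggregate off-diagonal entries. Then
\[
\bm x^\top(M_{\omega{\rm J}}-N)\bm x = \bm x^\top\Delta\bm x + \bm x^\top B\bm x - \bm x^\top N\bm x .
\]
Compare this with $\bm y = |\bm x|$. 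The term $\bm x^\top B\bm x \ge \bm y^\top B\bm y$ since $B\le0$. The term $-\bm x^\top N\bm x \ge \bm y^\top N \bm y$, since $N \le 0$ gives $|\bm x^\top N\bm x| \le -\bm y^\top N\bm y$. Therefore
\[
\bm x^\top(M_{\omega{\rm J}}-N)\bm x \;\ge\; \bm y^\top\Delta\bm y + \bm y^\top B\bm y + \bm y^\top N\bm y = \bm y^\top A\bm y > 0
\]
whenever $\bm x\neq 0$, using $A\succ 0$.

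This gives $2M_{\omega{\rm J}}-A\succ0$, hence $M_{\omega{\rm J}}^{-1/2}AM_{\omega{\rm J}}^{-1/2}\prec 2I$. By the similarity argument already used in \cref{lem:calN-eig-form}, this yields $\lambda_{\max}(M_{\omega{\rm J}}^{-1}A)<2$.

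The main obstacle is the sign bookkeeping. The intra-aggregate off-diagonal part $B$ must be handled with the inequality in the favorable direction, and the strictness of the final inequality must come from $A\succ0$ applied to $\bm y=|\bm x|\neq0$. Both points are covered by the elementwise estimates above.
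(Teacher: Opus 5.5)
Your proof is correct, but it takes a different route from the paper.

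The paper writes $A=M_{\omega{\rm J}}-\Delta$ with $\Delta=M_{\omega{\rm J}}-A\ge 0$ entrywise. It invokes the regular-splitting theorem for nonsingular $\mathsf{M}$-matrices to get $\rho(I-M_{\omega{\rm J}}^{-1}A)<1$, i.e., convergence of undamped block Jacobi. It then uses the similarity of $M_{\omega{\rm J}}^{-1}A$ to $M_{\omega{\rm J}}^{-1/2}AM_{\omega{\rm J}}^{-1/2}$ to place the spectrum in $(0,2)$.

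You instead prove $2M_{\omega{\rm J}}-A\succ0$ directly, via the entrywise comparison $\bm x^\top(2M_{\omega{\rm J}}-A)\bm x\ge|\bm x|^\top A|\bm x|$. This works because $2M_{\omega{\rm J}}-A$ has the same diagonal as $A$ and off-diagonal entries of the same magnitude.

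\begin{itemize}
\item \textbf{What your route buys.} It is elementary and self-contained: it needs no $\mathsf{M}$-matrix theory and no inverse-positivity of $M_{\omega{\rm J}}$. It yields exactly the condition $2M_{\omega{\rm J}}-A\succ0$ used in \cref{lem:Mtilde_from_M}. It also gives the explicit margin $2M_{\omega{\rm J}}-A\succeq\lambda_{\min}(A)\,I$.
\item \textbf{What the paper's route buys.} Its core step does not rely on symmetry. Moreover, since $M_{\omega{\rm J}}^{-1}\Delta\ge 0$ entrywise, Perron--Frobenius would additionally give the finer bound $\lambda_{\max}(M_{\omega{\rm J}}^{-1}A)\le 2-\lambda_{\min}(M_{\omega{\rm J}}^{-1}A)$.
\end{itemize}

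For a clean write-up:
\begin{itemize}
\item Drop the exploratory detours (the signature-matrix attempt and the inequality that ``goes the wrong way''), since your final argument uses neither.
\item Note that nothing of Perron--Frobenius type is actually used in your argument.
\item State explicitly that $M_{\omega{\rm J}}$ is SPD, being block diagonal with SPD principal submatrices of $A$, before forming $M_{\omega{\rm J}}^{-1/2}$.
\end{itemize}
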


\begin{proof}
Let us write $A = M_{\omega{\rm J}} - \Delta$, such that $\Delta := M_{\omega{\rm J}} - A$.
Since the matrix \(M_{\omega{\rm J}}\) is obtained from \(A\) by setting some of the off-diagonal entries in \(A\) to zero and since \(A\) is a nonsingular \(\mathsf{M}\)-matrix, the regular-splitting result
\cite[Thm. 3.31]{varga1999matrix} gives
\(
    \rho(M_{\omega{\rm J}}^{-1}\Delta)<1 .
\)
Using \(\Delta=M_{\omega{\rm J}}-A\), this is
\(
    \rho(I-M_{\omega{\rm J}}^{-1}A)<1 .
\)
Moreover, \(M_{\omega{\rm J}}\) is SPD because it is block diagonal with SPD principal subblocks of \(A\). Hence \(M_{\omega{\rm J}}^{-1}A\) is similar to
\(M_{\omega{\rm J}}^{-1/2} A M_{\omega{\rm J}}^{-1/2}\), and therefore has real positive eigenvalues. If
\(\mu\) is an eigenvalue of \(M_{\omega{\rm J}}^{-1}A\), then \(1-\mu\) is an eigenvalue of
\(I-M_{\omega{\rm J}}^{-1}A\), so \(|1-\mu|<1\). Thus \(0<\mu<2\), and consequently
\(
    \lambda_{\max}(M_{\omega{\rm J}}^{-1}A)
    <2 .
\)
\end{proof}

Substituting this result into \cref{cor:damped_two_level_bound} gives the following.  

\begin{corollary}
\label{cor:ddm_block_jacobi_two_level}
Assume the hypotheses of \cref{lem:A_le_2_MomegaJ} and let 
\(M = \zeta^{-1}M_{\omega{\rm J}}\) with \(0<\zeta<1\).
Then
\[
K_{\rm TG}^{\zeta}
\le
\frac{\tau_{\max}}{2\zeta(1-\zeta)} .
\]
In particular, 
\(
K_{\rm TG}^{\zeta_*}\le 2\tau_{\max} 
\)
for \(\zeta_*=1/2\).
\end{corollary}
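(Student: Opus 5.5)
The plan is to specialize \cref{cor:damped_two_level_bound} to \(M=M_{\omega{\rm J}}\), so \(M_\zeta=\zeta^{-1}M_{\omega{\rm J}}\), and feed in the bound from \cref{lem:A_le_2_MomegaJ}. Since \(M_{\omega{\rm J}}\) is SPD (block diagonal with principal submatrices of SPD \(A\)), \cref{cor:damped_two_level_bound} applies directly. The transfer factor is trivial, because \(\lambda_{\max}(M_{\omega{\rm J}}^{-1}M_{\omega{\rm J}})=1\). Hence, for every admissible \(\zeta\),
\[
K_{\rm TG}^{\zeta}\le \frac{\tau_{\max}}{\zeta\bigl(2-\zeta\beta\bigr)},\qquad \beta:=\lambda_{\max}(M_{\omega{\rm J}}^{-1}A).
\]

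Next I would check admissibility and monotonicity in \(\beta\). \Cref{lem:A_le_2_MomegaJ} gives \(\beta<2\), so \(2/\beta>1\). Every \(\zeta\in(0,1)\) therefore satisfies \(0<\zeta<2/\beta\), and \cref{cor:damped_two_level_bound} gives \(\rho_{M_\zeta}<1\). For fixed \(\zeta>0\), the denominator \(\zeta(2-\zeta\beta)\) is decreasing in \(\beta\). Replacing \(\beta\) by its upper bound \(2\) only shrinks the denominator, which stays positive because \(\zeta<1\):
\[
\zeta(2-\zeta\beta)\ \ge\ \zeta(2-2\zeta)=2\zeta(1-\zeta)>0 .
\]
This yields \(K_{\rm TG}^\zeta\le \tau_{\max}/(2\zeta(1-\zeta))\).

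Finally, \(\zeta(1-\zeta)\) is maximized on \((0,1)\) at \(\zeta_*=1/2\), where it equals \(1/4\). Substituting gives \(K_{\rm TG}^{1/2}\le \tau_{\max}/(2\cdot\tfrac14)=2\tau_{\max}\).

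I do not expect a real obstacle; the only point needing care is the one above. The damping range \((0,1)\) is chosen so that it lies inside the true convergence window \((0,2/\beta)\) using only the crude bound \(\beta<2\), and the inequality on the denominator must be applied in the correct direction. Because \(\zeta_*=1/2\) comes from the worst-case bound \(\beta\le 2\) rather than the exact value of \(\beta\), it need not coincide with the \(\zeta_*=1/\beta\) of \cref{cor:damped_two_level_bound}; I would say so to avoid confusion.
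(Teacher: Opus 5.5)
Your proposal is correct and is the same argument the paper intends ("substituting this result into \cref{cor:damped_two_level_bound}"). You specialize to $M=M_{\omega{\rm J}}$, so the transfer factor is $1$, and use $\lambda_{\max}(M_{\omega{\rm J}}^{-1}A)<2$ both for admissibility of $\zeta\in(0,1)$ and for the denominator bound $\zeta(2-\zeta\beta)\ge 2\zeta(1-\zeta)$, before maximizing $\zeta(1-\zeta)$ at $\zeta=1/2$.
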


\section{Comparison with convergence theory based on the fictitious subspace lemma}
\label{sec:theory_interpretation}

Developing a fair comparison between the new theory developed in this paper and existing DD theory is rather nuanced, depending on many factors, including whether one is comparing fixed-point and preconditioned iteration, two-level and multilevel methods, single-step and asymptotic bounds, or identical versus related operators. 
The purpose of this section is not to make such a rigorous comparison but merely to provide a qualitative comparison with related DD-type methods and the associated convergence bounds that have been developed with alternative techniques. 

Two-level convergence theory for an algorithm closely related to LS-AMG-DD was previously developed in~\cite{aldaas2022normaleq}. The present analysis differs from that work in two main respects. First,~\cite{aldaas2022normaleq} studies an \emph{additive} two-level preconditioner in combination with an additive Schwarz smoother, whereas in this paper we analyze a \emph{multiplicative} two-grid realization in the form of a $V(1,1)$ cycle. Second, the theoretical framework is different: the analysis in~\cite{aldaas2022normaleq}, and more broadly in spectral overlapping domain decomposition methods, is typically based on the fictitious subspace lemma, while we derive bounds using multigrid-style approximation properties.

\paragraph{Additive two-level Schwarz and fictitious-subspace analysis}
A commonly analyzed method in spectral domain decomposition is the additive two-level Schwarz preconditioner
\[
B_{\mathrm{add}}^{-1} = P A_c^{-1} P^\top + M^{-1}_{\Omega {\rm AS}},
\]
which yields the error-propagation operator
\[
E_{\mathrm{add}} = I-\Pi_A - M^{-1}_{\Omega {\rm AS}} A.
\]
Using the new bound $ \lambda_{\max}(M_{\Omega {\rm AS}}^{-1}A) \leq \nu_\Omega$ from \Cref{thm:nu_touching_AS} and assuming an exact local SPSD splitting (implying $k_m=1$ in the context of \cite{aldaas2022normaleq}), yields the convergence bound
\[
\kappa(B_{\mathrm{add}}^{-1}A) \;\le\; (\nu_\Omega+1)\Bigl(2+(2\nu_\Omega+1)\tau_{\max}\Bigr),
\]
where $\tau_{\max}$ is defined as in~\eqref{eq:tau_i_def}. The additive formulation is attractive from an analytical standpoint—particularly within the fictitious subspace lemma—but it is less commonly used in practice due to its inferior performance compared with multiplicative variants.

\paragraph{Balanced Neumann--Neumann (BNN) correction}
A standard multiplicative alternative is the so-called balanced Neumann--Neumann (BNN) correction,
\[
B_{\mathrm{BNN}}^{-1} = P A_c^{-1}P^\top + (I-\Pi_A)\,M^{-1}_{\Omega {\rm AS}}\,(I-\Pi_A^\top),
\]
which typically improves practical convergence. It is important, however, to distinguish algorithmic cost: applying $B_{\mathrm{BNN}}^{-1}$ to a vector involves three coarse-space corrections, whereas a $V(1,1)$ cycle requires only a single coarse correction. Using the identity $\Pi_A^\top A = A\Pi_A$, the action of $B_{\mathrm{BNN}}^{-1}A$ can be implemented with two coarse corrections, but it remains more expensive than the $V(1,1)$ cycle in this respect.

The error-propagation operator associated with $B_{\mathrm{BNN}}^{-1}$ is
\[
E_{\mathrm{BNN}} = (I-\Pi_A)\bigl(I-M^{-1}_{\Omega {\rm AS}} A+ M^{-1}_{\Omega {\rm AS}}A\Pi_A\bigr).
\]
Using the fictitious subspace lemma, the new bound $ \lambda_{\max}(M_{\Omega {\rm AS}}^{-1}A) \leq \nu_\Omega$ from \Cref{thm:nu_touching_AS}, and assuming an exact local SPSD splitting, one obtains the bound
\[
\kappa(B_{\mathrm{BNN}}^{-1}A) \;\le\; \nu_\Omega\,(1+\tau_{\max}),
\]
see, e.g.,~\cite[Chapter~7]{DolJN15}. 

\paragraph{Relationship to the $V(1,1)$ cycle}
Although the error-propagation operators for the $V(1,1)$ cycle analyzed here and for the BNN preconditioner are not identical, asymptotically they are closely related, in the sense that their powers exhibit the same dominant factors. Specifically, raising the two-grid error-propagation operator $E_{\rm TG}$ from \eqref{eq:ETG} and $E_{\mathrm{BNN}}$ to the power $k$ yields
\[
E_{\rm TG}^k
= (I-M_{\Omega {\rm AS}}^{-\top}A)\left((I-\Pi_A)(I-\widetilde{M}_{\Omega {\rm AS}}^{-1}A)\right)^{k-1}(I-\Pi_A)(I-M_{\Omega {\rm AS}}^{-1}A),
\]
and
\[
E_{\mathrm{BNN}}^k
= \left((I-\Pi_A)(I-M_{\Omega {\rm AS}}^{-1}A)\right)^{k-1}(I-\Pi_A)\bigl(I-M_{\Omega {\rm AS}}^{-1}A+M_{\Omega {\rm AS}}^{-1}A\Pi_A\bigr).
\]
These expressions make clear that both schemes repeatedly apply the same core contraction mechanism driven by $(I-\Pi_A)$ and the Schwarz smoothing step.

The bound in \Cref{lem:two_level_concrete} of \(K_{\mathrm{TG}}^{\zeta_*} \le \nu_\Omega \tau_{\max}\) is smaller than the BNN bound quoted above of \(\kappa(B_{\mathrm{BNN}}^{-1}A) \;\le\; \nu_\Omega (1+\tau_{\max})\). 
However, this comparison is not between identical operators.  The BNN estimate is a condition-number bound for a two-level preconditioner, typically used with a Krylov method.  The estimate in \Cref{lem:two_level_concrete} is instead a contraction bound for a stationary two-level cycle using a damped smoother.  
Thus the comparison only shows that, for this cycle, the multigrid argument gives a smaller constant than the quoted BNN bound, but that does not imply that the \(V(1,1)\) cycle is a better preconditioner than the balanced DD method.

\section{Numerical experiments}
\label{sec:numerics}

We now numerically test the theory using moderate-sized model problems for which the coarse-space, smoother, and observed two-level quantities can be measured directly. 

\subsection{Two-level numerical setup}
\label{subsec:numerics_two_level_setup}

For each model problem described next in \cref{subsec:numerics_model_problems}, we construct a
single two-level LS-AMG-DD hierarchy implemented in PyAMG \cite{bell2023pyamg}.  
Aggregates are built by two passes of
standard aggregation on the graph of \(A\), with no strength-of-connection
filter. 
The interpolation $P$ is constructed according to \cref{def:coarse_space}, and the Galerkin coarse operator
\(P^{\top}AP\) is inverted directly.
For each smoother \(M\) we consider (detailed below), we report an observed two-level constant \(K_M^{\rm obs} \approx K_{\rm TG}\) measured by applying the two-level method to the homogeneous system \(A\bm x=\bm 0\).  
Since \(\bm x = \bm 0\), the iterate \(\bm{x}_k\) is also the algebraic error, $\bm{e}_k := \bm{x}_k - \bm{0} = \bm{x}_k$.
Each reported \(K_M^{\rm obs}\) is the worst-case constant from applying 100 two-level iterations across 10 different randomly chosen initial vectors $\{\bm{x}_0^s \}_{s = 1}^{10}$.
That is, we define
\[
  \rho_M^{\rm obs}
  :=
  \max_{\substack{1\le s\le 10\\0\le k<100}}
  \frac{\|\bm e_{k+1}^s\|_A}{\|\bm e_{k}^s\|_A},
  \qquad
  K_M^{\rm obs}
  :=
  \frac{1}{1-\rho_M^{\rm obs}},
\]
with \(K_M^{\rm obs}=\infty\) if \(\rho_M^{\rm obs}\ge 1\). 

In our diagnostics we also report $\tau_{\rm cut}$ as well as the realized cutoff \(\tau_{\max} \leq \tau_{\rm cut}\). 
We consider the damped smoother $M_{\zeta} = \zeta^{-1} M$ for $M\in\{M_{\omega{\rm J}},M_{\Omega{\rm AS}}\}$ with damping \(\zeta = \zeta_*=\lambda_{\max}(M^{-1}A)^{-1}\). 
For this we report numerically observed values $K_{M}^{\rm obs}$, as well as the bound \( \texttt{bound}_{M}
  :=
  \lambda_{\max}(M^{-1}A) \tau_{\max} \geq K_{\rm TG}^{\zeta_*}
\) coming from combining \cref{cor:damped_two_level_bound} and \cref{lem:as_vs_MaggJac}.
The spectral radii
\(\lambda_{\max}(M_{\omega{\rm J}}^{-1}A)\) and
\(\lambda_{\max}(M_{\Omega{\rm AS}}^{-1}A)\) are computed by PyAMG's \texttt{approximate\_spectral\_radius}.  
We also report $K_{M}^{\rm obs}$ for the nonsymmetric smoothers $ M \in \{M_{\Omega {\rm RAS}}, M_{\Omega {\rm MS}}\}$, although we do not explicitly compute the (conditional) bound of \(
\lambda_{\max}(M^{-1}_{\omega{\rm J}} \widetilde M)\,\tau_{\max}
\) from \cref{thm:two_level_general}.
The RAS method is as described in \cite{southworth2026lsamgdd}, and the multiplicative Schwarz algorithm is the standard, naturally-ordered multiplicative Schwarz algorithm defined on the overlaps \eqref{eq:overlap_def}; see also the discussion in \cref{sec:nonsymmetric_smoother_case}.

Finally, we report the minimal Jacobi WAP constant ${\mathcal W_{M_{\omega{\rm J}}}}$. From \cref{lem:Q0_basic}, recall the $M_{\omega {\rm J}}$-orthogonal projection onto $\range(P)$ denoted by \(\Pi_{M_{\omega {\rm J}}}\). 
Then from \cref{def:wap_calM} we have
\begin{align*}
    {\mathcal W_{M_{\omega{\rm J}}}} 
    = 
    \sup_{\bm v \neq 0}
    \frac{\| (I - \Pi_{M_{\omega {\rm J}}}) \bm{v} \|^2_{M_{\omega {\rm J}}}}{\| \bm{v} \|_A^2}
    =
    \sup_{\bm v \neq 0}
    \frac{
    \bm{v}^\top \big[ (I - \Pi_{M_{\omega {\rm J}}})^\top
    M_{\omega {\rm J}}
    (I - \Pi_{M_{\omega {\rm J}}}) \big] \bm{v}
    }
    {
    \bm{v}^\top A \bm{v}
    }.
\end{align*}
We numerically evaluate this generalized eigenvalue with \texttt{sparse.linalg.eigsh} from SciPy  \cite{virtanen2010scipy}, using a tolerance of \(10^{-6}\); 
applications of \(A^{-1}\) are applied with a direct solver and the action of \(I - \Pi_{M_{\omega {\rm J}}}\) is computed matrix free using the expression in \cref{lem:Q0_basic}.

\subsection{Model problems}
\label{subsec:numerics_model_problems}

We consider three conforming finite element model operators.  The scalar
\(H^1({\cal D})\) and vector \(H(\operatorname{div}; {\cal D})\) problems are posed on the two-dimensional
spatial domain \({\cal D} = (0,1)^2\), discretized on triangular meshes, while the
vector \(H(\operatorname{curl}; {\cal D})\) problems are posed on the three-dimensional spatial
domain \({\cal D} = (0,1)^3\) and discretized on tetrahedral meshes. 
We write
\(\mathrm{CG}_p\) for the continuous Lagrange space of degree \(p\),
\(\mathrm{RT}_p\) for the Raviart--Thomas space of degree \(p\),
and
\(\mathrm{N1}_p\) for the first-family N\'ed\'elec edge space of degree \(p\).
The \(H^1\) and \(H(\operatorname{div})\) tests use \(p\in\{1,3\}\), while
the \(H(\operatorname{curl})\) tests use \(p\in\{1,2\}\).
All problems are implemented with the Firedrake finite-element package \cite{FiredrakeUserManual}.
The matrices $A = G^\top G$ used in the diagnostics are assembled from one of the three bilinear forms
\begin{subequations}
\begin{align*}
  a_1(u_h,v_h)
  &:=
  (K\operatorname{grad} u_h,\operatorname{grad} v_h)_{{\cal D}}
  + b_{1}(u_h,v_h),
  && u_h,v_h\in\mathrm{CG}_p,
  \\
  a_{\operatorname{div}}(\bm q_h,\bm r_h)
  &:=
  (\alpha \operatorname{div}\bm q_h,\operatorname{div}\bm r_h)_{{\cal D}}
  +(\bm q_h,\bm r_h)_{{\cal D}}
  +b_{\operatorname{div}}(\bm q_h,\bm r_h),
  && \bm q_h,\bm r_h\in\mathrm{RT}_p,
  \\
  a_{\operatorname{curl}}(\bm w_h,\bm z_h)
  &:=
  (\alpha \operatorname{curl}\bm w_h,\operatorname{curl}\bm z_h)_{{\cal D}}
  +(\bm w_h,\bm z_h)_{{\cal D}}
  +b_{\operatorname{curl}}(\bm w_h,\bm z_h),
  && \bm w_h,\bm z_h\in\mathrm{N1}_p,
\end{align*}
\end{subequations}
using broken finite-element spaces, as detailed in \cite{krzysik2026conforming}.
For the scalar diffusion problem, the diffusion tensor is
\[
K = Q_\theta D_\epsilon Q_\theta^\top,
\qquad
  Q_\theta =
  \begin{pmatrix}
    \cos\theta & -\sin\theta \\
    \sin\theta & \cos\theta
  \end{pmatrix},
  \qquad
  D_\epsilon =
  \begin{pmatrix}
    1 & 0 \\
    0 & \epsilon
  \end{pmatrix},
\]
with \(\epsilon=10^{-3}\) and \(\theta=\pi/6\), corresponding to non-mesh-aligned anisotropy.  
For the vector problems, we use the coefficient \(\alpha=10^3\). 

Homogeneous essential boundary conditions are imposed weakly on all boundaries using coefficient-scaled symmetric trace penalties. Note that, as described in \cite{krzysik2026conforming}, boundary conditions are imposed weakly for ease of numerical implementation and not because of any limitations of the solver framework. 
Specifically, with
\(\gamma_p=36p^2\), the boundary forms used above are defined by
\begin{subequations}
\label{eq:numerics_boundary_penalties}
\begin{align*}
  b_{1}(u,v)
  &:=
  \sum_{F\subset\partial{\cal D}}
  \left\langle
    \frac{\gamma_p\,\bm n^{\top}K\bm n}{h_F}u,v
  \right\rangle_F,
  \\
  b_{\operatorname{div}}(\bm q,\bm r)
  &:=
  \sum_{F\subset\partial{\cal D}}
  \left\langle
    \gamma_p\left(\frac{\alpha}{h_F}+h_F\right)
    (\bm q\cdot\bm n)(\bm r\cdot\bm n)
  \right\rangle_F,
  \\
  b_{\operatorname{curl}}(\bm w,\bm z)
  &:=
  \sum_{F\subset\partial{\cal D}}
  \left\langle
    \gamma_p\left(\frac{\alpha}{h_F}+h_F\right)
    (\bm n\times\bm w)\cdot(\bm n\times\bm z)
  \right\rangle_F .
\end{align*}
\end{subequations}
Here \(\bm n\) is the unit outward normal and \(h_F\) is the local facet
diameter.  Thus the weakly imposed essential traces are \(u_h=0\),
\(\bm q_h\cdot\bm n=0\), and \(\bm n\times\bm w_h=\bm 0\), respectively.

\subsection{Results}
\label{subsec:numerics_results}

The data are shown in
\cref{tab:h1_two_level_diagnostics,tab:hdiv_two_level_diagnostics,tab:hcurl_two_level_diagnostics}.
Every table entry has the form
``\(
  \text{value on a coarse mesh} \,/\, \text{value on a once-refined mesh},
\)''. The same PDE problem uses different mesh resolutions for different \(p\).
For each problem and polynomial degree, the caption reports the corresponding
total number of algebraic DOFs \(n_h\) on the coarse and once-refined meshes.  All values are
rounded to one decimal place.

\begin{table}[h!]
\caption{Two-level diagnostics for the two-dimensional, scalar \(H^1\) rotated anisotropic
diffusion test with \(\epsilon=10^{-3}\), \(\theta=\pi/6\).  Entries are reported as
coarse mesh/refined mesh.  For \(p=1\), the meshes have \(n_h=16641/66049\)
algebraic DOFs.  For \(p=3\), the meshes have \(n_h=9409/37249\)
algebraic DOFs.
\label{tab:h1_two_level_diagnostics}
}
\centering
\setlength{\tabcolsep}{2.5pt}
\begin{tabular}{c|cc|cc|cc|cc}
\hline
\(\tau_{\rm cut}\)
&
\(\tau_{\max}\)
&
\({\mathcal W_{M_{\omega{\rm J}}}}\)
&
\(\texttt{bound}_{\omega{\rm J}}\)
&
\(K_{\omega{\rm J}}^{\rm obs}\)
&
\(\texttt{bound}_{\Omega{\rm AS}}\)
&
\(K_{\Omega{\rm AS}}^{\rm obs}\)
&
\(K_{\Omega {\rm RAS}}^{\rm obs}\)
&
\(K_{\Omega {\rm MS}}^{\rm obs}\)
\\
\hline
\multicolumn{9}{c}{\(p=1\)} \\
\hline
\(2\)  & \(2.0/2.0\) & \(1.8/1.8\) & \(4.0/4.0\)   & \(2.1/2.1\) & \(6.0/7.4\)   & \(1.8/2.2\) & \(1.1/1.1\)        & \(1.1/1.1\) \\
\(5\)  & \(5.0/5.0\) & \(4.1/4.1\) & \(10.0/10.0\) & \(4.9/4.7\) & \(14.9/18.4\) & \(3.7/4.0\) & \(4.5/4.2\)        & \(2.1/2.0\) \\
\(10\) & \(9.4/9.4\) & \(6.1/6.1\) & \(19.0/19.1\) & \(8.2/8.1\) & \(28.2/35.0\) & \(7.8/8.8\) & \(25.9/\infty\)   & \(6.3/6.6\) \\
\hline
\multicolumn{9}{c}{\(p=3\)} \\
\hline
\(2\)  & \(2.0/2.0\) & \(1.6/1.7\) & \(4.1/4.1\)   & \(2.1/2.3\) & \(8.0/8.0\)   & \(2.3/2.3\) & \(1.1/1.1\)        & \(1.3/1.3\) \\
\(5\)  & \(5.0/5.0\) & \(3.4/3.4\) & \(10.2/10.2\) & \(4.3/3.8\) & \(19.9/19.9\) & \(2.9/2.7\) & \(1.5/1.4\)        & \(1.4/1.4\) \\
\(10\) & \(9.9/9.9\) & \(6.4/6.6\) & \(20.3/20.4\) & \(7.9/7.5\) & \(39.5/39.8\) & \(4.6/3.8\) & \(\infty/\infty\) & \(2.0/2.0\) \\
\hline
\end{tabular}
\end{table}

\begin{table}[h!]
\caption{Two-level diagnostics for the two-dimensional, vector \(H(\operatorname{div})\) model problem discretized with \(\mathrm{RT}_p\) elements and \(\alpha=10^3\).  Entries are reported as coarse mesh/refined mesh.  For \(p=1\), the meshes have
\(n_h=12416/49408\) algebraic DOFs.  For \(p=3\), the meshes have
\(n_h=5472/21696\) algebraic DOFs.
\label{tab:hdiv_two_level_diagnostics}
}
\centering
\setlength{\tabcolsep}{2.5pt}
\begin{tabular}{c|cc|cc|cc|cc}
\hline
\(\tau_{\rm cut}\)
&
\(\tau_{\max}\)
&
\({\mathcal W_{M_{\omega{\rm J}}}}\)
&
\(\texttt{bound}_{\omega{\rm J}}\)
&
\(K_{\omega{\rm J}}^{\rm obs}\)
&
\(\texttt{bound}_{\Omega{\rm AS}}\)
&
\(K_{\Omega{\rm AS}}^{\rm obs}\)
&
\(K_{\Omega {\rm RAS}}^{\rm obs}\)
&
\(K_{\Omega {\rm MS}}^{\rm obs}\)
\\
\hline
\multicolumn{9}{c}{\(p=1\)} \\
\hline
\(2\)  & \(2.0/2.0\) & \(1.0/1.1\) & \(6.0/6.0\)   & \(1.8/1.8\) & \(7.9/7.9\)   & \(2.3/2.3\) & \(1.0/1.0\) & \(1.1/1.1\) \\
\(5\)  & \(3.9/3.9\) & \(3.9/3.9\) & \(11.7/11.7\) & \(7.2/7.2\) & \(15.5/15.5\) & \(5.9/5.6\) & \(8.3/8.4\) & \(3.9/4.0\) \\
\(10\) & \(3.9/3.9\) & \(3.9/3.9\) & \(11.7/11.7\) & \(7.2/7.2\) & \(15.5/15.5\) & \(5.9/5.6\) & \(8.3/8.4\) & \(3.9/4.0\) \\
\hline
\multicolumn{9}{c}{\(p=3\)} \\
\hline
\(2\)  & \(2.0/2.0\) & \(1.2/1.2\) & \(6.0/6.0\)   & \(1.9/1.9\)  & \(6.0/8.0\)   & \(1.8/2.3\) & \(1.2/1.2\) & \(1.4/1.4\) \\
\(5\)  & \(3.5/3.5\) & \(1.3/1.3\) & \(10.4/10.4\) & \(2.2/2.2\)  & \(10.4/13.8\) & \(1.8/2.3\) & \(1.3/1.3\) & \(1.6/1.5\) \\
\(10\) & \(9.5/9.4\) & \(9.4/8.2\) & \(28.5/28.3\) & \(17.3/15.4\) & \(28.5/37.7\) & \(6.7/7.5\) & \(4.5/4.6\) & \(2.7/2.6\) \\
\hline
\end{tabular}
\end{table}

\begin{table}[h!]
\caption{Two-level diagnostics for the three-dimensional, vector
\(H(\operatorname{curl})\) model problem discretized with \(\mathrm{N1}_p\)
elements and \(\alpha=10^3\).  Entries are reported as coarse/refined.  For
\(p=1\), the meshes have \(n_h=4184/31024\) algebraic DOFs.  For \(p=2\),
the meshes have \(n_h=2936/21424\) algebraic DOFs.
\label{tab:hcurl_two_level_diagnostics}
}
\centering
\setlength{\tabcolsep}{2.5pt}
\begin{tabular}{c|cc|cc|cc|cc}
\hline
\(\tau_{\rm cut}\)
&
\(\tau_{\max}\)
&
\({\mathcal W_{M_{\omega{\rm J}}}}\)
&
\(\texttt{bound}_{\omega{\rm J}}\)
&
\(K_{\omega{\rm J}}^{\rm obs}\)
&
\(\texttt{bound}_{\Omega{\rm AS}}\)
&
\(K_{\Omega{\rm AS}}^{\rm obs}\)
&
\(K_{\Omega {\rm RAS}}^{\rm obs}\)
&
\(K_{\Omega {\rm MS}}^{\rm obs}\)
\\
\hline
\multicolumn{9}{c}{\(p=1\)} \\
\hline
\(2\)  & \(2.0/2.0\) & \(1.5/1.6\) & \(5.4/5.6\)   & \(2.3/2.4\)  & \(14.0/14.1\) & \(3.8/3.8\) & \(1.1/1.1\)       & \(1.1/1.1\) \\
\(5\)  & \(4.9/5.0\) & \(3.2/3.5\) & \(13.3/14.1\) & \(4.8/5.1\)  & \(34.4/35.1\) & \(3.9/4.0\) & \(1.9/2.5\)       & \(1.2/1.6\) \\
\(10\) & \(9.5/9.9\) & \(5.0/6.2\) & \(25.6/27.8\) & \(7.5/10.0\) & \(66.3/69.3\) & \(5.8/8.5\) & \(3.1/7.0\)       & \(1.4/2.4\) \\
\hline
\multicolumn{9}{c}{\(p=2\)} \\
\hline
\(2\)  & \(2.0/2.0\)  & \(1.5/1.5\) & \(5.8/6.4\)   & \(2.4/2.6\)  & \(10.3/14.0\) & \(2.9/3.8\) & \(1.1/1.1\)      & \(1.1/1.1\) \\
\(5\)  & \(5.0/5.0\)  & \(3.3/3.7\) & \(14.5/15.9\) & \(5.2/6.0\)  & \(25.9/35.0\) & \(2.9/3.8\) & \(1.4/1.5\)      & \(1.2/1.2\) \\
\(10\) & \(9.8/10.0\) & \(6.4/6.5\) & \(28.5/31.8\) & \(9.8/10.6\) & \(50.9/70.0\) & \(3.1/4.8\) & \(11.8/\infty\) & \(1.2/1.5\) \\
\hline
\end{tabular}
\end{table}

The displayed data support the main two-level estimates in
\cref{sec:two_level} and \cref{sec:smoothers}.  
The first comparison is between the realized
cutoff \(\tau_{\max}\) and the minimal WAP constant
\({\mathcal W_{M_{\omega{\rm J}}}}\).  In all reported cases, \({\mathcal W_{M_{\omega{\rm J}}}}\le \tau_{\max}\), as
predicted by \cref{thm:jacobi_WAP}.
The comparison is also fairly
sharp in many rows, especially at larger \(\tau_{\rm cut}\).  
We note
that \(\tau_{\max}\) is a post-setup quantity determined by the local spectra:
it need not coincide with the prescribed threshold \(\tau_{\rm cut}\).  This is visible in
several rows where the first discarded finite eigenvalue is slightly smaller than the prescribed input cutoff $\tau_{\rm cut}$, and particularly in the $p=1$ rows of the \(H(\operatorname{div})\) table, wherein the realized coarse space is the same for both $\tau_{\rm cut} = 5$ and $\tau_{\rm cut} = 10$.

The symmetric two-level constants show the same basic behavior.  The observed
constants \(K_{\omega{\rm J}}^{\rm obs}\) and
\(K_{\Omega{\rm AS}}^{\rm obs}\) are bounded by their corresponding quantities \(\texttt{bound}_{\omega{\rm J}}\) and \(\texttt{bound}_{\Omega{\rm AS}}\) throughout the
tables. These bounds track the correct scale of the observed constants across all three model classes, degrees, and mesh refinements.  
The block-Jacobi bound is generally closer to the observed value than the overlapping-additive-Schwarz bound.  This is consistent with the proof structure, since the WAP is measured directly in the
aggregate block-Jacobi norm, while the additive-Schwarz estimate passes through an additional comparison of smoother metrics. We note that in cases of smaller $\tau_{\rm cut}$, the observed block Jacobi constants are typically smaller than the overlapping additive-Schwarz two-level constants, but that this trend tends to reverse for larger $\tau_{\rm cut}$. 

The RAS and overlapping multiplicative Schwarz columns generally appear consistent with the general bound \(K_{\mathrm{TG}}
\le 
\lambda_{\max}(M^{-1}_{\omega{\rm J}} \widetilde M)\,\tau_{\max}\) from \cref{thm:two_level_general}, in that the observed two-level constants increase with $\tau_{\rm cut}$. 
However, it is difficult to say more, since we do not currently have a way of (practically) estimating or bounding \(\lambda_{\max}(M^{-1}_{\omega{\rm J}} \widetilde M)\).
It is interesting that the overlapping multiplicative Schwarz two-level cycle is by far the most robust of the observed nonsymmetric
variants in these experiments, with \(K_{\Omega {\rm MS}}^{\rm obs}\) remaining comparatively small and showing only mild growth as \(\tau_{\rm cut}\) is increased.  
The RAS behavior is
less uniform.
In fact, the \(H^1\) and \(H(\operatorname{curl})\) tables contain infinite values of \(K_{\Omega {\rm RAS}}^{\rm obs}\) for \(\tau_{\rm cut}=10\), indicating that the two-level solver diverged. 
We expect that in these cases the corresponding smoother is not contractive in the $A$-norm, such that the bound \(K_{\mathrm{TG}}
\le 
\lambda_{\max}(M^{-1}_{\omega{\rm J}} \widetilde M)\,\tau_{\max}\) does not hold because the smoother does not meet the hypothesis required by \cref{thm:two_level_general}.\footnote{For some coarser-mesh $H^1$ problems than reported in \cref{tab:h1_two_level_diagnostics}, we have verified by dense computation that the operator \(M_{\Omega{\rm RAS}}+M_{\Omega{\rm RAS}}^\top-A\) is indefinite, corresponding to the smoother not being contractive in the \(A\)-norm.} 
This is consistent with the discussion in \cref{sec:nonsymmetric_smoother_case}, which showed that this smoother is not guaranteed to be convergent.
We also highlight that, despite the fact that RAS is not convergent in some cases, in many cases it yields a faster converging two-level method than either block Jacobi or overlapping additive Schwarz.

There is little evidence of systematic convergence deterioration under the displayed mesh
refinements or increases in polynomial degree.  The values
change from case to case, and larger thresholds $\tau_{\rm cut}$ naturally lead to larger
constants because fewer local eigenvectors are retained, but the constants do
not show uncontrolled growth with either \(h^{-1}\) or \(p\).  This is also consistent with the theory: after the local spectral threshold $\tau_{\rm cut}$ has been
fixed, the two-level constant is controlled by the scale of the smoother iteration, which should not depend on $h$ or $p$, as per \cref{sec:smoothers}.

\subsection{Overlapping additive Schwarz}
\label{sec:AS_num}

Finally, we consider the additive Schwarz theory discussed in \cref{sec:AS} by specializing the generic cover $\{ {\cal Q}_i \}$ introduced in \cref{thm:nu_touching_AS} to the overlapping additive Schwarz case,
\(\mathcal Q_i=\Omega_i\).
These diagnostics use the same test setups as described in the preceding numerical sections, but
do not construct the LS-AMG-DD coarse space and do not run a two-level method. 
The goal is only to compare the spectral radius
\(\lambda_{\max}^{\rm num}(M_{\Omega{\rm AS}}^{-1}A)\approx\lambda_{\max}(M_{\Omega{\rm AS}}^{-1}A)\)
with several computable structural upper bounds, where \(\lambda_{\max}^{\rm num}(M_{\Omega{\rm AS}}^{-1}A)\) is computed with PyAMG's \texttt{approximate\_spectral\_radius}.

Recall that \(A=G^\top G=\sum_{j=1}^m \bm g_j\bm g_j^\top\), where
\(\bm g_j^\top\) is the \(j\)th row of \(G\). 
For the overlapping Schwarz domains
\(\{\Omega_i\}\), we form the binary row-overlap incidence matrix
\(\mathcal E_\Omega\) described in \cref{sec:AS}, recalling that the row-touching constant from \cref{thm:nu_touching_AS} is
\(\nu_\Omega:=\max_j\sum_i(\mathcal E_\Omega)_{ji} = \| \mathcal E_\Omega \|_{\infty}\).
For comparison with standard coloring-based estimates, we also form the row-induced overlap
interaction matrix \(\mathcal C_\Omega\) described in \cref{sec:AS}. 
We denote by \(\chi_\Omega\) the exact chromatic number of this interaction matrix, and by
\(\chi_\Omega^{\rm MIS} \geq \chi_\Omega\) the upper bound that is the number of colors returned by PyAMG's \texttt{graph.vertex\_coloring} with \texttt{method="MIS"} \cite{bell2023pyamg}. 
We compute the exact chromatic number of ${\cal C}_{\Omega}$ using
\texttt{chromatic\_number} from the GCol package \cite{lewis2025gcol}.
Since exact graph coloring is NP-hard, this is not intended as a practical setup
quantity; it is included here only as a diagnostic, and is feasible because the
interaction graphs in these tests are small enough.

\begin{table}[h!]
\centering
\small
\begin{tabular}{cc|cccc}
\hline
problem & \(p\) &
\(\lambda_{\max}^{\rm num}(M_{\Omega{\rm AS}}^{-1}A)\) &
\(\nu_\Omega\) &
\(\chi_\Omega\) &
\(\chi_\Omega^{\rm MIS}\) \\
\hline
\(H^1\) & 1 & \(3.0/3.7\) & \(3/4\) & \(3/4\) & \(4/5\) \\
\(H^1\) & 3 & \(4.0/4.0\) & \(4/4\) & \(4/4\) & \(4/4\) \\
\(H(\operatorname{div})\) & 1 & \(4.0/4.0\) & \(4/4\) & \(4/4\) & \(5/5\) \\
\(H(\operatorname{div})\) & 3 & \(3.0/4.0\) & \(3/4\) & \(3/4\) & \(4/4\) \\
\(H(\operatorname{curl})\) & 1 & \(7.0/7.0\) & \(8/8\) & \(8/9\) & \(9/11\) \\
\(H(\operatorname{curl})\) & 2 & \(5.2/7.0\) & \(6/8\) & \(6/9\) & \(6/12\) \\
\hline
\end{tabular}
\caption{Overlapping additive Schwarz diagnostics for the same test setups as in the preceding numerical sections. 
The \(H^1\) and \(H(\operatorname{div})\) tests are two-dimensional, while the
\(H(\operatorname{curl})\) tests are three-dimensional.
Entries are reported as ``value on a coarse mesh/value on a once-refined mesh.''
The quantity
\(\lambda_{\max}^{\rm num}(M_{\Omega{\rm AS}}^{-1}A)\) is the PyAMG estimate of
\(\lambda_{\max}(M_{\Omega{\rm AS}}^{-1}A)\) rounded to one decimal place.}
\label{tab:oas_smoother_diagnostics}
\end{table}

The data in \cref{tab:oas_smoother_diagnostics} show that the row-touching
constant \(\nu_\Omega\) gives a sharp and easily computable upper bound for
these tests.  In all reported cases,
\(\lambda_{\max}^{\rm num}(M_{\Omega{\rm AS}}^{-1}A)\lesssim\nu_\Omega\).  The estimate is nearly
saturated for the \(H^1\) and \(H(\operatorname{div})\) tests, while the
three-dimensional \(H(\operatorname{curl})\) tests show somewhat more slack.
The comparison with coloring constants is also informative.  By construction,
\(\nu_\Omega\le\chi_\Omega\le\chi_\Omega^{\rm MIS}\), but \(\nu_\Omega\) is
computable at scale, unlike \(\chi_\Omega\), since it only requires row sums
of \(\mathcal E_\Omega\).  In the two-dimensional \(H^1\) and
\(H(\operatorname{div})\) tests, \(\nu_\Omega\) and \(\chi_\Omega\) coincide.
In contrast, the three-dimensional \(H(\operatorname{curl})\) refined-mesh
runs give strict separations, e.g.,
\(\nu_\Omega=8<9=\chi_\Omega\) for both \(p=1\) and \(p=2\).  Thus the
row-touching estimate can be genuinely sharper than the corresponding exact
coloring constant, consistent with
\cref{lem:nu_strictly_smaller_than_coloring}.
We note that in certain two-dimensional $H^1$ tests not reported here we have observed a strict separation of $\nu_{\Omega} = 3 < 4 = \chi_{\Omega}$.

\section{Conclusion}
\label{sec:conclusion}

This paper proposes a novel two-level convergence theory for the recently developed LS-AMG-DD solver, for SPD matrices admitting Gram representations, \(A=G^{\top}G\) \cite{southworth2026lsamgdd}.  
The solver constructs interpolation by solving local spectral problems on algebraically-defined nonoverlapping aggregates, and uses block smoothing defined with respect to the aggregate graph.
The central estimate we develop is based on a
weak approximation property in an aggregate-wise block-Jacobi norm. We establish that once the local generalized eigenproblems have been solved, the realized spectral cutoff provides a computable bound for the corresponding approximation constant.  
Combining this estimate with sharp theory for
two-level cycles yields convergence bounds that separate the contribution of the coarse space from the scale of the
smoother.
Numerical experiments across challenging \(H^1\), \(H(\operatorname{div})\), and
\(H(\operatorname{curl})\) finite element problems support the theory.
The numerical tests also indicate that the bounds generally predict the scale of practically relevant constants to within modest factors (e.g., $\sim 1$--3 in many cases), despite being derived from purely algebraic upper bounds.  
The results also show little systematic convergence
deterioration under mesh refinement or increasing polynomial degree refinement,
which is consistent with the theory.
Numerical experiments indicate that nonsymmetric overlapping Schwarz
smoothers can sometimes be more effective than block Jacobi or overlapping
additive Schwarz.  Our general theory does not yet cover these smoothers,
because it depends on the constant
\(\lambda_{\max}(M^{-1}_{\omega{\rm J}}\widetilde M)\), for which we do not currently have a practical estimate.  It is also unclear whether such an estimate would lead to sharp two-level bounds.

\section*{Acknowledgments}
The authors used OpenAI's ChatGPT during the preparation of this manuscript, including for exploratory mathematical discussion, coding, and drafting of the exposition.
All mathematical claims, proofs, computations, citations, and final wording were reviewed, verified, and edited by the authors, who take full responsibility for the manuscript.

\bibliographystyle{siamplain}
\bibliography{conv-thry-refs}

\appendix

\end{document}